\newtheorem{theorem}{Theorem}[section]
\newtheorem{lemma}[theorem]{Lemma}
\newtheorem{proposition}[theorem]{Proposition}
\newtheorem{corollary}[theorem]{Corollary}
\theoremstyle{definition}
\newtheorem{definition}[theorem]{Definition}
\newtheorem{remark}[theorem]{Remark}
\newlength{\Oldarrayrulewidth}
\newcommand{\floor}[1]{\left\lfloor #1 \right\rfloor}
\newcommand{\N}{\mathbb{N}}
\newcommand{\mex}{\textup{\rm MEX}}
\renewcommand{\mod}[2]{\equiv#1\textup{ (mod }#2\textup{)}}
\newcommand{\Nim}{\mathcal{G}}
\def\m@th{\mathsurround=0pt}
\def\sm#1{\null\,\vcenter{\baselineskip9pt\lineskip.23ex\m@th
    \ialign{\hfil$\scriptstyle##$\hfil&&\ \hfil$\scriptstyle##$\hfil\crcr
    \mathstrut\crcr\noalign{\kern-\baselineskip}
    #1\crcr\mathstrut\crcr\noalign{\kern-\baselineskip}}}\,}
\def\smnp#1{\null\,\vcenter{\baselineskip9pt\lineskip.23ex\m@th
    \ialign{\hfil$\scriptstyle##$\hfil&&\ \ \hfil$\scriptstyle##$\hfil\crcr
    \mathstrut\crcr\noalign{\kern-\baselineskip}
    #1\crcr\mathstrut\crcr\noalign{\kern-\baselineskip}}}\,}
\begin{document}

\title{On the Nature and Complexity of an Impartial Two-Player Variant of the Game Lights-Out\texttrademark}
\author[1]{Eugene Fiorini\thanks{efiorini@dimacs.rutgers.edu}}
\author[2]{Maxwell Fogler\thanks{mafo1356@colorado.edu}}
\author[3]{Katherine Levandosky\thanks{levandosky.k@northeastern.edu}}
\author[4]{Bryan Lu\thanks{byl29@cornell.edu}}
\author[5]{Jacob Porter\thanks{porterjk@lafayette.edu}}
\author[6]{Andrew Woldar\thanks{andrew.woldar@villanova.edu}}

\affil[1]{DIMACS-Rutgers University, Piscataway, NJ, USA}
\affil[2]{University of Colorado, Boulder, CO, USA}
\affil[3]{Northeastern University, Boston, MA, USA}
\affil[4]{Cornell University, Ithaca, NY, USA}
\affil[5]{Lafayette College, Easton, PA, USA}
\affil[6]{Villanova University, Villanova, PA, USA}

\date{}

\maketitle
 
\begin{abstract}
In this paper we study a variant of the solitaire game Lights-Out\texttrademark, where the player's goal is to turn off a grid of lights. This variant is a two-player impartial game where the goal is to make the final valid move. This version is playable on any simple graph where each node is given an assignment of either a 0 (representing a light that is off) or 1 (representing a light that is on). We focus on finding the Nimbers of this game on grid graphs and generalized Petersen graphs. We utilize a recursive algorithm to compute the Nimbers for $2 \times n$ grid graphs and for some generalized Petersen graphs. \\
\\
 \end{abstract}
\noindent MSC: 05C57, 91A05, 91A68.\\
Keywords: Toggle, Lights Out, Nimbers, generalized Petersen graph, CNF, QBF, PSPACE-complete, log-complete. \\
\thanks{This research was partially supported by the National Science Foundation, grant numbers DMS-1852378 and DMS-2150299.} 


\section{Introduction}
Lights-Out\texttrademark \! is a commercial game that consists of turning lighted buttons on or off on a $5 \times 5$ array by pressing them one at a time. This can be represented as a $5 \times 5$ lattice with vertex labels $1$ (on) and $0$ (off). A \emph{move} involves switching the $0/1$ status of a vertex as well as the $0/1$ status of all its neighbors. A complete strategy for this game is detailed by Anderson and Feil in \cite{AF98}. In this paper we generalize this concept and consider a \emph{two-player impartial game}, which we will refer to as \emph{Toggle}, played on graphs other than lattices. Special attention is paid to the generalized Petersen graph $P(m,k)$ (see Definition \ref{def:P(m,k)}).

Following \cite{BCG}, a two-player impartial game refers to a game with the following properties:\\[-7mm]
\begin{enumerate}
    \item Two players alternate moves until a final state is reached, at which point one player is declared the winner. \item At each stage allowable moves depend only on the state of the game and not on which player is moving. 
    \item Both players have perfect information, i.e., both players know the state of the game at all times.
    \item No moves rely on chance. 

\end{enumerate}

The Sprague-Grundy theorem \cite{BK} asserts that all two-player impartial games can be analyzed by assigning a nonnegative integer value, called the \emph{Nimber} (or \emph{Grundy number}), to each position recursively. Under normal play constraints, this theorem implies that either the first player has a winning strategy, denoted as an $N$-game ($N$ for next player), or the second player has a winning strategy, denoted as a $P$-game ($P$ for previous player). Note that the Nimber of a game is $0$ if and only if the game is a second-player win, i.e.\ the second player has a winning strategy regardless of the moves of the first player. Further note that a game is an $N$-game if and only if there exists at least one legal move that results in a $P$-game, whereas it is a $P$-game if and only if there is no legal move or every legal move results in an $N$-game. 

As maintained by the Sprague-Grundy theorem, the Nimber $\mathcal{G}(\mathcal{S})$ of a position $\mathcal{S}$ is determined using the \emph{minimal-excluded rule}: if $T$ is a finite subset of $\mathbb{N} \cup \{0\}$, then 
$$\mex(T)=\min\{(\N\cup\{0\})\setminus T\}.$$
The Nimber of a two-player impartial game with position $\mathcal{S}$ is given by $$\mathcal{G}(\mathcal{S}) = \mex\{\mathcal{G}(\mathcal{S}_1),\mathcal{G}(\mathcal{S}_2),\dots,\mathcal{G}(\mathcal{S}_n)\}$$ where $\mathcal{S}_1,\mathcal{S}_2,\dots,\mathcal{S}_n$ represent all possible positions that occur after one move is played at position $\mathcal{S}$.
Furthermore, if $\mathcal{L}$ represents a position consisting of two independent impartial games with positions $\mathcal{H}$ and $\mathcal{K}$, then $$\mathcal{G}(\mathcal{L}) = \mathcal{G}(\mathcal{H}) \oplus \mathcal{G}(\mathcal{K}),$$
where $x \oplus y$ denotes the bitwise $XOR$ between two nonnegative integers $x$ and $y$.
Later in the paper, we will explain Nimbers in the context of Toggle.

Throughout, let $G$ denote a finite undirected simple graph. The (open) neighborhood of a vertex $v \in V(G)$ is represented by $N(v)$. We denote the closed neighborhood of $v$ by $N[v] = N(v) \cup \{v\}$. Likewise, open and closed sets of vertices at distance at most $r$ from a fixed vertex $v$ are represented by $N_r(v)$ and $N_r[v]$, respectively. For a subset $W \subset V(G)$, we denote the induced subgraph on $W$ by $G[W]$.


\section{Definitions and preliminary results}\label{basic}

The game of Toggle is played on a simple connected graph $G$ where each vertex of $G$ is assigned an initial weight of $0$ or $1$. We denote the weight of a vertex $v$ at stage $j$ by $\omega^{(j)}(v)$, where the initial stage is defined as stage $j=0$. Let $\sigma^{(j)}(v) := \sum \{\omega^{(j)}(u)\!\mid\! {u\in N[v]}\}$ and denote by $V^{(j)}_i(G)$ the set of all vertices of weight $i$ ($i=0,1$) after a $j^{\rm th}$ Toggle move, i.e., $V^{(j)}_i(G) = \{v \in V(G) \mid \omega^{(j)}(v) = i\}$. Finally, we define $\sigma^{(j)}(G) = \left|V_1^{(j)}(G)\right|$.

A \emph{legal Toggle move} at stage $j$ consists of selecting a vertex $v \in V(G)$ with $\omega^{(j)}(v)=1$ and switching the weights of $u$ to $\omega^{(j+1)}(u)=\omega^{(j)}+1 \pmod{2}$ for every $u \in N[v]$ subject to the requirement $\sigma^{(j+1)}(v) < \sigma^{(j)}(v)$. In such case, we refer to the vertex $v \in V(G)$ as \emph{playable}.

Note that the above implies $\sigma^{(j+1)}(G) < \sigma^{(j)}(G)$, and as a consequence a game of Toggle consists of at most $|V(G)|$ moves.





\begin{definition}
We say $v \in V(G)$ is \emph{terminally unplayable at stage $j$} if it becomes unplayable at some stage $k\leq j$ and remains unplayable irrespective of all future moves. We call $v \in V(G)$ \emph{penultimately unplayable} if $v$ becomes terminally unplayable after at most one move at $u \in N[v]$ regardless of the sequence of previous moves. We extend the above terminology to graphs, i.e.\ we call $G$ \emph{terminally unplayable at stage $j$} (resp.\ \emph{penultimately unplayable}) if every vertex $v \in V(G)$ is terminally unplayable at stage $j$ (penultimately unplayable). When the stage is clear from context, we omit $j$ from the notation, simply stating that $v$ or $G$ is terminally (or penultimately) unplayable.
\end{definition}

\begin{remark}\label{rmk:uv}
    It is easy to see that if $\omega^{(j)}(v)=0$ and $u$ is terminally unplayable for all $u \in N(v)$, then $v$ is terminally unplayable as well.
    
\end{remark}
  
\begin{proposition}\label{prop:PathCycle}
    Let $G$ be a finite simple graph given the assignment $V_0^{(0)}(G)=\emptyset$ with $\Delta(G) \le 2$. Then $G$ is penultimately unplayable.
\end{proposition}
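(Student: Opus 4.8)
The plan is to analyze the structure of $G$ when $\Delta(G) \le 2$ and every vertex starts with weight $1$. Such a graph is a disjoint union of paths and cycles, so by the independence of components under Nimber-addition (and since the notion of penultimate unplayability is checked vertex-by-vertex), it suffices to treat $G$ a single path $P_n$ or a single cycle $C_n$. First I would observe that, because a legal move at $v$ requires $\sigma^{(j+1)}(v) < \sigma^{(j)}(v)$, the very first move is heavily constrained: if $v$ has weight $1$ and all of $N[v]$ has weight $1$ (which is the case at stage $0$), then toggling $N[v]$ sets $\omega(v)$ to $0$, so $\sigma^{(1)}(v)$ drops by at least $1$ on account of $v$ itself — hence the first move is always legal at any vertex. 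The key point is to track what the closed neighborhood of each vertex looks like after that first move.

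The main step is a local case analysis. After one move at $u$, consider any vertex $v$; I want to show $v$ is now terminally unplayable, i.e. no sequence of future moves can ever make $v$ playable. For a vertex $v$ of degree $\le 2$, playability at a later stage $k$ requires $\omega^{(k)}(v) = 1$ and that toggling $N[v]$ strictly decreases $\sigma^{(k)}(v) = \sum_{w \in N[v]} \omega^{(k)}(w)$. Since toggling flips each of the (at most $3$) weights in $N[v]$, the change in $\sigma$ is $(\deg(v)+1) - 2\sigma^{(k)}(v)$; for this to be negative with $\omega^{(k)}(v)=1$ we need $\sigma^{(k)}(v) > (\deg(v)+1)/2$, i.e. $\sigma^{(k)}(v) = \deg(v)+1$ (all of $N[v]$ has weight $1$) when $\deg(v) = 2$, or $\sigma^{(k)}(v) \ge 1$ i.e. $\omega^{(k)}(v)=1$ when $\deg(v) \le 1$. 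So for degree-$2$ vertices, $v$ is playable at stage $k$ \emph{only if} all of $N[v]$ has weight $1$ at stage $k$; and for degree-$0$ or degree-$1$ vertices $v$ becomes terminally unplayable the instant $\omega(v)$ hits $0$, which happens within one move at a vertex of $N[v]$. The crux is then: once the first move is made somewhere, can a full monochromatic-$1$ closed neighborhood $N[v]$ ever re-form around a degree-$2$ vertex? I would argue no, by showing that the ``all-ones'' configuration on $G$ is never revisited — indeed $\sigma^{(j)}(G)$ is strictly decreasing, so $\sigma^{(j)}(G) < |V(G)|$ for all $j \ge 1$, meaning some vertex always has weight $0$; then I would show more carefully, using connectedness and the path/cycle structure, that the zeros ``spread'' or at least persist in a way that prevents any local all-ones window of size $3$ from reappearing after the first move — e.g. by tracking, along a path, that the set of weight-$0$ vertices once nonempty can never become empty within any interval, because a move at $u$ creates a $0$ at $u$ (as shown above $\omega(u) \to 0$ unless $u$ already had a $0$ neighbor, in which case the analysis is even easier).

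I expect the main obstacle to be the last part: ruling out the re-formation of an all-ones closed neighborhood around a degree-$2$ vertex at some later stage. The subtlety is that a move at $u$ can flip a $0$ back to a $1$, so zeros are not literally monotone; the argument must instead be that the move at $u$ always \emph{produces} a fresh $0$ in $N[u]$ (at $u$ itself when $\sigma^{(j)}(u)$ is large, or the move is simply illegal when $\sigma^{(j)}(u)$ is small), and combine this with the fact that a playable degree-$2$ vertex needs \emph{all three} of its closed-neighborhood weights to be $1$ simultaneously — a condition incompatible with the freshly-created $0$ being anywhere in that window. Handling the boundary vertices (degree-$1$ endpoints of paths) and the wrap-around in cycles, and checking the small cases $n \le 2$ separately, should be routine once the main local lemma is in place.
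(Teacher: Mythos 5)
Your reduction to a single path or cycle and your formula for the change in $\sigma$ under a toggle, $\Delta\sigma^{(k)}(v)=|N[v]|-2\sigma^{(k)}(v)$, are both correct, but the thresholds you derive from it are not, and the error is fatal to your plan. For a degree-$2$ vertex the playability condition $\sigma^{(k)}(v)>\tfrac{3}{2}$ means $\sigma^{(k)}(v)\ge 2$, not $\sigma^{(k)}(v)=3$: a degree-$2$ vertex is playable as soon as it has weight $1$ \emph{and at least one} neighbor has weight $1$, not only when its entire closed neighborhood is all ones. Likewise, for a degree-$1$ vertex the condition is $\sigma^{(k)}(v)=2$ (both $v$ and its unique neighbor have weight $1$), not merely $\omega^{(k)}(v)=1$. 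Consequently the ``crux'' you set yourself --- ruling out the re-formation of an all-ones window of size $3$ --- is the wrong target; a freshly created $0$ somewhere in $N[u]$ does not prevent a nearby degree-$2$ vertex from being playable via its \emph{other} neighbor. What actually has to be excluded is the much weaker event that $v$ and some one neighbor simultaneously return to weight $1$.

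Your companion claim that a degree-$\le 1$ vertex is ``terminally unplayable the instant $\omega(v)$ hits $0$'' is also unjustified: a later move at its neighbor $u$ flips $\omega(v)$ back to $1$ (and $\omega(u)$ to $0$), and a still later move at $u$'s other neighbor $w$ could flip $\omega(u)$ back to $1$, making $v$ playable again. Ruling out that cascade is precisely the nontrivial content of the result --- the paper's own remark exhibits exactly such a revival on $P_8$ when a single initial $0$ is present, so the all-ones hypothesis must enter in an essential, propagating way. The paper handles this by induction on $n$: the first move splits $P_n$ into two shorter all-ones instances, the induction hypothesis makes the far neighbors terminally unplayable, and a short chase then shows the two or three vertices adjacent to the first move can never recover a weight-$1$ neighbor. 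Your proposal, as written, has no mechanism playing the role of that induction, so the gap is not a routine boundary check but the heart of the proof.
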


\begin{proof}
   Without loss of generality, we may assume $G$ is connected. Suppose first that $G$ is an $n$-path $P_n=v_1v_2\dots v_n$. We proceed by induction on $n$. It is left as an easy exercise to show the base cases $P_1$, $P_2$, $P_3$ with respective assignments $V^{(0)}_0(P_i) = \emptyset$, $i=1,2,3$, are penultimately unplayable.

    Assume $P_m$ with $V^{(0)}_0(P_m) = \emptyset$ is penultimately unplayable for all positive integers $m < n$. Suppose the initial Toggle move on $P_n = v_1 v_2 \dots v_n$ occurs at vertex $v_i$. First suppose $i=1$. (The case $i=n$ is symmetric.) Then $V^{(1)}_0(P_n) = \{v_{1},v_2\}$ and $V^{(1)}_1(P_n) = \{v_3,v_4, \dots v_n\}$. The induction hypothesis implies $P_{n-2} = v_3 v_4 \dots v_{n}$ is penultimately unplayable. So each vertex $v_3,\dots,v_n$ in $P_n$ is penultimately unplayable unless $v_1$ or $v_2$ affects its playability. If $v_1$ and $v_2$ each have weight $0$, then the playability of each vertex $v_3,\dots,v_n$ in $P_n$ is equivalent to the playability of the respective vertices in $P_{n-2}$. Thus to show $P_n$ with $V^{(0)}_0(P_n) = \emptyset$ is penultimately unplayable we need only show $v_{1}$ and $v_2$ are terminally unplayable at stage $1$.

    The only way for $v_2$ to become playable is if $\omega^{(j)}(v_2)=\omega^{(j)}(v_3)=1$ for some $j>1$, and the only way to have $\omega^{(j)}(v_2)=1$ is if a move is made on $v_3$ at stage $k < j$. Thus assume $\omega^{(k-1)}(v_3) = 1$ and the $k^{th}$ Toggle move occurs at $v_3$. Then $\omega^{(k)}(v_2) = 1$ and $\omega^{(k)}(v_1)=\omega^{(k)}(v_3)=\omega^{(k)}(v_4)=0$, so $v_2$ remains unplayable. Since a move was made on $v_3$, $v_4$ is terminally unplayable by the induction hypothesis. So $\omega^{(j)}(v_3)=0$ for all $j>k$, which implies $v_2$ is terminally unplayable. By Remark \ref{rmk:uv}, $v_1$ is also terminally unplayable.

    Next, suppose the initial move is made at $v_i \in V(P_n)$ for $i \ne 1,n$. Then $V^{(1)}_0(P_n) = \{v_{i-1},v_i,v_{i+1}\}$. Define $P_{i} =v_1v_2 \cdots v_{i}$ and $P_{n-i+1}=v_iv_{i+1} \cdots v_n$ with $V^{(1)}_0(P_i) = \{v_{i-1},v_i \}$ and $V^{(1)}_0(P_{n-i+1})= \{v_i,v_{i+1} \}$. Note that after an initial Toggle move at $v_i$ on $P_i$ with $V^{(0)}_0(P_i) = \emptyset$, the resulting assignment is $V^{(1)}_0(P_i) = \{v_{i-1},v_i \}$. Similarly, after an initial Toggle move at $v_i$ on $P_{n-i+1}$, the resultant assignment is $V^{(1)}_0(P_{n-i+1}) = \{v_{i},v_{i+1} \}$. Since $i,n-i+1 <n$, and $V^{(0)}_0(P_i) = \emptyset$ and $V^{(0)}_0(P_{n-i+1}) = \emptyset$, we may conclude by induction that $P_i$ and $P_{n-i+1}$ are penultimately unplayable. Thus each vertex $v_1,\dots, v_{i-2},v_{i+2}, \dots v_n$ in $P_n$ is penultimately unplayable unless $v_{i-1}, v_i, v_{i+1}$ affects its playability.
    
    It remains to show that $v_{i-1}$, $v_i$, and $v_{i+1}$ are terminally unplayable after the initial Toggle move at $v_i$. Suppose the $k^{th}$ Toggle move is made at vertex $v_{i+2}$. Then $\omega^{(k)}(v_{i+1})=1$ and $\omega^{(k)}(v_i)=\omega^{(k)}(v_{i+2})=0$, so $v_{i+1}$ remains unplayable. By the induction hypothesis, $v_{i+3}$ is terminally unplayable once a move has been made on $v_{i+2}$. Thus, $\omega^{(j)}(v_{i+2})=0$ for all $j>k$. It follows that $v_{i+1}$ is terminally unplayable and by symmetry, $v_{i-1}$ is terminally unplayable as well. But then $v_i$ is terminally unplayable by Remark \ref{rmk:uv}. Therefore $P_n$ is penultimately unplayable.

    Now suppose $G$ is a cycle $C_n=v_1v_2\dots v_n$ with initial assignment $V_0^{(0)}=\emptyset$, $n \ge 5$. (The cases $C_3$ and $C_4$ are easily treated and left as an exercise.) Consider the path obtained by removing the edge $v_1v_2$ from $C_n$. By above, the resulting path is penultimately unplayable. It remains to show that $v_1$ and $v_2$ are penultimately unplayable in $C_n$. But this is achieved by removing the edge $v_{n-1}v_n$ since the resulting path is penultimately unplayable and contains $v_1v_2$ as an internal edge.
\end{proof}

\begin{remark}
    Note that the condition $V_0^{(0)}(G) = \emptyset$ in Proposition \ref{prop:PathCycle} is necessary. For example, if $G = P_8$ and $V_0^{(0)}(G) = \{v_6\}$ then consecutive Toggle moves on $v_5$, $v_6$, $v_3$ render $v_5$ playable again, see Figure \ref{fig:PCCounter}.
\end{remark}

\begin{figure}[H]
    \centering
    \begin{tikzpicture}[scale=1.1]
        \coordinate(a) at (-4.5,1);\coordinate(b) at (-3,1);\coordinate(c) at (-1.5,1);\coordinate(d) at (0,1);\coordinate(e) at (1.5,1);\coordinate(f) at (3,1);\coordinate(g) at (4.5,1);\coordinate(h) at (6,1);\foreach\i in{a,b,c,d,e,f,g,h}{\filldraw(\i)circle(0.08);}\draw (a)--(b)--(c)--(d)--(e)--(f)--(g)--(h);
        \node at (-4.5,1.3){$v_{1}$};\node at (-3,1.3){$v_{2}$};\node at (-1.5,1.3) {$v_{3}$};\node at (0,1.3) {$v_{4}$};\node at (1.5,1.3){$v_{5}$};\node at (3,1.3){$v_{6}$};\node at (4.5,1.3) {$v_{7}$};\node at (6,1.3) {$v_{8}$};

        \node at (-4.5,0.5){$1$};\node at (-3,0.5){$1$};\node at (-1.5,0.5) {$1$};\node at (0,0.5) {$1$};\node at (1.5,0.5){$1$};\node at (3,0.5){$0$};\node at (4.5,0.5) {$1$};\node at (6,0.5) {$1$};
        
    \end{tikzpicture}\vspace*{-2mm}
    \caption{An example of an initial state with $\Delta(G) \leq 2$ and $V_{0}^{(0)}(G) \neq \emptyset$.}
    \label{fig:PCCounter}
    
\end{figure}
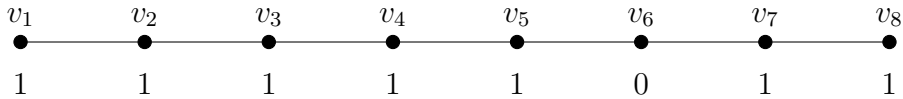

The next result follows immediately from Proposition \ref{prop:PathCycle}. The proof is left as an exercise for the reader.

\begin{corollary}
    Let $G$ be an $n$-path $P_n=v_1v_2\dots v_n$ given the assignment $V_1^{(0)}(P_n)= \{v_{m_1}, v_{m_1+1}, \dots, v_{m_2-1}, v_{m_2} \}$ for some $1 \le m_1 < m_2 \le n$. Then $P_n$ is penultimately unplayable.
\end{corollary}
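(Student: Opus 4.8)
The plan is to argue by induction on $n$, reducing to Proposition~\ref{prop:PathCycle} and following the case analysis in its proof. As there, we may assume $P_n$ is connected. If $\{v_{m_1},\dots,v_{m_2}\}=V(P_n)$ the assertion is exactly Proposition~\ref{prop:PathCycle}, so by the reflection $v_j\mapsto v_{n+1-j}$ we may assume $v_n$ has initial weight $0$, i.e.\ $m_2\le n-1$. For the induction it is convenient to prove the \emph{a priori} stronger statement that \emph{any} assignment to a path $P_n$ whose weight-$1$ set is a set of consecutive vertices — now also allowing that set to be empty or a singleton — makes $P_n$ penultimately unplayable. When the set is empty this is immediate; when it is a singleton with $n\ge2$ the unique weight-$1$ vertex has degree at most $2$ and is never playable, while its weight-$0$ neighbours stay weight $0$ and terminally unplayable by Remark~\ref{rmk:uv} (the case $n=1$ being trivial); and when it is all of $V(P_n)$ this is Proposition~\ref{prop:PathCycle}. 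The base cases $n\le3$ are checked by hand.

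For the inductive step, note that since only weight-$1$ vertices can be played, any first move of a game on $P_n$ is at some $v_i$ with $m_1\le i\le m_2$. Playing $v_i$ switches the weights of $N[v_i]$; let $B$ be the resulting maximal run of consecutive weight-$0$ vertices through $v_i$. Generically $B=\{v_{i-1},v_i,v_{i+1}\}$ if $v_i$ is interior to the block, while $B$ is a pair if $v_i$ is an endpoint of the block (in which case, if $v_i$ is not also an endpoint of $P_n$, the adjacent vertex of the old $0$-tail becomes a lone weight-$1$ vertex); a larger $B$ occurs only when one of the two pieces below is empty, and then the argument only simplifies. In every case the remaining weight-$1$ vertices form, on each side of $B$, a set of consecutive vertices that is possibly empty or a singleton. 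Let $P'$ and $P''$ be the subpaths of $P_n$ obtained by deleting $B$. Each is strictly shorter than $P_n$ and carries an assignment of the type covered by the induction hypothesis, so both $P'$ and $P''$ are penultimately unplayable.

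It remains to show that after this first move every vertex of $B$ is terminally unplayable. (As in the proof of Proposition~\ref{prop:PathCycle} the weight of a vertex of $B$ may still change; the point is that it is never again playable.) Granting this, no vertex of $B$ is ever played, so the moves affecting $V(P')$ and those affecting $V(P'')$ cannot interact — any interaction would have to pass through a vertex of $B$ — and they constitute, respectively, a game on $P'$ and a game on $P''$; moreover giving a vertex an extra weight-$0$ neighbour lying in $B$ only makes it harder to play, so the penultimate unplayability of $P'$ and of $P''$ carries over to every vertex of $P_n$ outside $B$, while for each vertex of $B$ the single initial move at $v_i$ is the only move ever made in its closed neighbourhood. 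Hence $P_n$ is penultimately unplayable. That the vertices of $B$ are terminally unplayable after the first move is proved exactly as the corresponding step in the proof of Proposition~\ref{prop:PathCycle}, with $P'$ and $P''$ in the roles of the two subpaths occurring there: a weight-$1$ can reach the end vertex of $B$ adjacent to $P''$ only via a move at the vertex of $P''$ next to $B$, and once that move is made the penultimate unplayability of $P''$ forces the next vertex of $P''$ to be terminally unplayable, whence that end of $B$ stays unplayable forever; symmetrically on the $P'$ side; and when $B$ has three vertices its middle vertex is terminally unplayable by Remark~\ref{rmk:uv}. Since the first move $v_i$ was arbitrary, and a position with no legal move is already terminally unplayable, the induction is complete.

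I expect the last step — verifying that the vertices of the ``bridge'' $B$ are terminally unplayable, equivalently that no weight-$1$ can propagate back across $B$ and restore playability to a vertex on its far side — to be where essentially all the work lies; this is precisely the delicate cascading argument already carried out in the proof of Proposition~\ref{prop:PathCycle}, and the remainder is routine bookkeeping about which blocks of weight-$1$ vertices can appear after the first move.
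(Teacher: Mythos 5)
Your argument is correct, but it is considerably heavier than what the paper intends: the authors state that the corollary ``follows immediately'' from Proposition~\ref{prop:PathCycle} and leave the proof as an exercise, the intended route presumably being a direct reduction rather than a fresh induction. Namely, the weight-$0$ tails $v_1,\dots,v_{m_1-1}$ and $v_{m_2+1},\dots,v_n$ never become playable (the first tail vertex ever to acquire weight $1$ would have to do so via a move at $v_{m_1}$ or $v_{m_2}$, after which it sits with all neighbours of weight $0$ and is frozen by the cascade of Proposition~\ref{prop:PathCycle} together with Remark~\ref{rmk:uv}), and attaching them only gives $v_{m_1}$ and $v_{m_2}$ an extra weight-$0$ neighbour, so every play on $P_n$ restricts to a legal play on the all-ones path $v_{m_1}\cdots v_{m_2}$, to which Proposition~\ref{prop:PathCycle} applies verbatim. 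What your re-induction buys is a self-contained statement (allowing empty and singleton blocks) that is closed under the decomposition you perform, at the cost of redoing the entire bridge analysis. Two points in your write-up deserve tightening. First, the assertion that an extra weight-$0$ neighbour in $B$ ``only makes it harder to play'' is valid only while that neighbour actually has weight $0$; once $v_{i-2}$ is played, $v_{i-1}$ acquires weight $1$ and the comparison reverses, so the transfer of the induction hypothesis from the standalone $P'$ to $P_n$ really does depend on the cascade showing that $v_{i-2}$ never regains weight $1$ — you should say explicitly that the correspondence with a legal $P'$-game is maintained stage by stage for exactly this reason. Second, the claim that for each vertex of $B$ ``the single initial move at $v_i$ is the only move ever made in its closed neighbourhood'' is false for the end vertices of $B$ (a later move at $v_{i\pm2}$ lies in their closed neighbourhoods); what you need, and what your bridge argument actually delivers, is that each vertex of $B$ is terminally unplayable already after the first move, so the extra move does no harm. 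Neither issue is fatal — both are resolved by the same cascading argument you cite from Proposition~\ref{prop:PathCycle} — but as written the proof leans on two slightly overstated shortcuts.
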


We are now prepared to prove a result on graphs with maximum degree three.

\begin{proposition}\label{prop:degree3penultimately}
Let $G$ be a simple graph on $n$ vertices with $V_0^{(0)}(G)=\emptyset$ and $\Delta(G) \le 3$. Then $G$ is penultimately unplayable.
\end{proposition}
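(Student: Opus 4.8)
The plan is to mimic the structure of the proof of Proposition~\ref{prop:PathCycle}, but now the starting point will be that proposition itself (the $\Delta\le 2$ case) rather than tiny base cases. The key observation is that when $V_0^{(0)}(G)=\emptyset$ and we make the very first Toggle move at a vertex $v$, the closed neighborhood $N[v]$ gets zeroed out, so the ``active'' part of the graph shrinks to $G - N[v]$ (on which all weights are still $1$), plus we must worry only about the boundary vertices in $N(v)$ and $v$ itself re-entering play. Since $\Delta(G)\le 3$, $|N[v]|\le 4$, so only a bounded-size piece needs careful case analysis; the bulk of the graph is handled by induction on $|V(G)|$.

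First I would reduce to $G$ connected, and dispose of the case $\Delta(G)\le 2$ by Proposition~\ref{prop:PathCycle}. So assume some vertex has degree $3$ and let the first move be at $v$. After this move, $\omega^{(1)}(u)=0$ for all $u\in N[v]$ and $\omega^{(1)}(u)=1$ for all other $u$. Let $H$ be the graph obtained from $G$ by deleting $N[v]$; then $\Delta(H)\le 3$ and $V_0^{(1)}(H)=\emptyset$, so by the induction hypothesis $H$ is penultimately unplayable. The remaining work is to show that, in $G$, the vertices of $N[v]$ are terminally unplayable after this first move, and that the presence of these zeroed-out vertices does not disturb the penultimate-unplayability of the vertices of $H$. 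The second point is the analogue of the ``$v_1,v_2$ each have weight $0$'' remark in the path proof: a vertex $w\in V(H)$ adjacent to some $u\in N[v]$ can only be affected by $u$ once $u$ regains weight $1$, which requires a move inside $N[v]$; so it suffices to prove that no such move ever happens, i.e., that all of $N[v]$ is terminally unplayable.

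To prove $N[v]$ is terminally unplayable after the first move, I would argue as in the path case: track how a vertex $u\in N(v)$ could regain weight $1$. This requires a move at some neighbor $u'$ of $u$ lying outside $N[v]$ (a move strictly inside $N[v]$ is impossible by a small induction/case check, since right after the first move every vertex of $N[v]$ has weight $0$ and, crucially, $\sigma$-count $\le$ something forcing it to stay $0$). When the first move outside $N[v]$ that touches $u$ occurs, at a vertex $u'\in N(u)\setminus N[v]$, one checks that $u$ gets weight $1$ but its other relevant neighbors get weight $0$, so $u$ is still unplayable; meanwhile $u'$'s \emph{other} neighbors become terminally unplayable by the induction hypothesis applied to $H$ (here one needs that making a move at $u'$ in $G$ induces, on the component of $H$ containing $u'$, exactly an initial-type move with the resulting zeroed neighborhood, so the ``penultimately unplayable'' conclusion for $H$ kicks in), forcing $u'$ back to weight $0$ permanently and hence $u$ to weight $0$ permanently. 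Once every $u\in N(v)$ is terminally unplayable, $v$ itself is terminally unplayable by Remark~\ref{rmk:uv}.

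The main obstacle I anticipate is the bookkeeping around the claim that "a move at $u'$ outside $N[v]$, viewed inside $H$, behaves like a fresh initial move," because $u'$ may have several neighbors in $N[v]$ and several in $H$, and the degree-$3$ bound allows configurations (e.g.\ $v$ of degree $3$ with its neighbors also of degree $3$, creating short cycles like $K_4$ minus an edge, or $v$ on a triangle) where $N[v]$ and its outer boundary interact in more than one place. I would handle this by a careful enumeration of the possible isomorphism types of $G[N_1[v]]$ together with the edges leaving it — there are only finitely many since $\Delta\le 3$ — and checking in each type that after the first move every vertex of $N[v]$ is penultimately unplayable "from the outside," reducing to already-proved facts about paths/cycles (Proposition~\ref{prop:PathCycle}) and to the induction hypothesis on the strictly smaller graph $H$. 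The triangle-containing cases and the case where two vertices of $N(v)$ are adjacent are where one must be most careful, since there the local $\sigma$-counts differ from the tree-like situation in the path proof.
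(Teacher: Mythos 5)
Your proposal follows essentially the same route as the paper's proof: induct on $|V(G)|$, observe that the first move at $v$ zeroes $N[v]$ and reduces everything outside $N[v]$ to the inductive hypothesis (with Proposition~\ref{prop:PathCycle} covering low-degree components), then show each $u \in N(v)$ is terminally unplayable because a move at an outside neighbor $w$ raises $\omega(u)$ to $1$ but drops $\omega(w)$ to $0$ permanently, whence $v$ itself is handled by Remark~\ref{rmk:uv}. The case enumeration you anticipate for triangles and adjacent neighbors is avoided in the paper by a uniform count: since $u$ and $v$ both lie in $N[u]$ with weight $0$ after the first move, $\sigma^{(1)}(u) \le \deg(u)-1 \le \left\lceil \deg(u)/2 \right\rceil$, and a subsequent move at $w$ gives $\sigma^{(2)}(u) \le \sigma^{(1)}(u)$, which keeps $u$ unplayable in every local configuration.
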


\begin{proof}
Without loss of generality, we may assume $G$ is connected. We proceed by induction on $n=|V(G)|$, $n \ge 5$. Verification of the base cases, which consist of all graphs on $n \le 4$ vertices, is left to the reader.

    Assume that any simple finite graph $H$ with $m < n$ vertices, $V^{(0)}_0(H)= \emptyset$, and $\Delta(H) = 3$ is penultimately unplayable. Suppose the initial Toggle move occurs at $v \in V(G)$. Define $G_v$ to be the subgraph of $G$ induced on the vertex set $V(G) \setminus N[v]$ and let $\mathcal{C}_1, \mathcal{C}_2, \dots \mathcal{C}_k$ be the components of $G_v$. If $\Delta(\mathcal{C}_i) \le 2$ for $i \in \{1,2,\dots, k\}$, then by Proposition \ref{prop:PathCycle} $\mathcal{C}_i$ is penultimately unplayable. Otherwise, $\Delta(\mathcal{C}_i) = 3$ for some $i \in \{1,2, \dots, k\}$, which implies $\mathcal{C}_i$ is penultimately unplayable by the induction hypothesis. Thus each of these components is penultimately unplayable in $G$ unless some $u \in N[v]$ affects their playability.
    
    It remains to show $u$ is terminally unplayable for all $u \in N[v]$. By Remark \ref{rmk:uv}, we may assume $u \neq v$. Observe that $u$ is clearly terminally unplayable if $\deg(u)=1$, hence assume $\deg(u) >1$. Further observe that $\sigma^{(1)}(u) \leq \deg(u)-1$. Because $\deg(u) \leq 3$, $\deg(u)-1 \leq \left\lceil\frac{\deg(u)}{2}\right\rceil$. So $\sigma^{(1)}(u) \leq \left\lceil\frac{\deg(u)}{2}\right\rceil$, which implies $u$ is unplayable. Let $w \in N(u) \setminus \{v\}$ be the vertex at which the next Toggle move is made. Note that this is only possible if $w \notin N(v)$. Thus $N[w] \setminus \{u\} \subseteq C_i$ for some $i \in \{1,2,\dots,k\}$. We further have $\omega^{(1)}(w)=1$, $\omega^{(2)}(w)=0$, and $\omega^{(2)}(u)=1$, so $\sigma^{(2)}(u) \le \sigma^{(1)}(u)+1-1 = \sigma^{(1)}(u) \leq \deg(u)-1 \leq \left\lceil\frac{\deg(u)}{2}\right\rceil$. Thus $u$ is still unplayable and can only become playable if $\omega^{(j)}(w)=1$ for some $j>2$. Since $w$ has been played, all vertices in $N[w] \setminus \{u\}$ are terminally unplayable by the induction hypothesis. It follows that $\omega^{(j)}(w)=0$ for all $j>2$, thus $u \in N(v)$ is terminally unplayable.
\end{proof}

\begin{remark}
    Note that the condition $V_0^{(0)}(G) = \emptyset$ in Proposition \ref{prop:degree3penultimately} is necessary. For example, if $V_0^{(0)}(G) = \{u_3\}$ then consecutive Toggle moves on $v$, $u_3$, $w_1$, $w_4$ render $v$ playable again, see Figure \ref{fig:Deg3Counter}.
\end{remark}

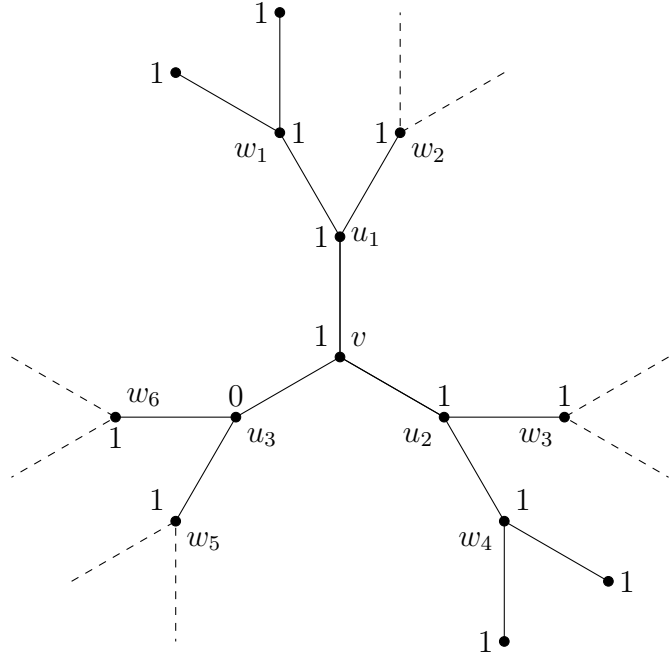
\begin{figure}[H]
    \centering
    \begin{tikzpicture}[scale=0.8]
\coordinate(a1) at (0,0);
\coordinate(b1) at (0,2);
\coordinate(b2) at (1.73,-1);
\coordinate(b3) at (-1.73,-1);
\coordinate(c1) at (-1,3.73);
\coordinate(c2) at (1,3.73);
\coordinate(c3) at (3.73,-1);
\coordinate(c4) at (2.73,-2.73);
\coordinate(c5) at (-2.73,-2.73);
\coordinate(c6) at (-3.73,-1);
\coordinate(d1) at (-2.73,4.73);
\coordinate(d2) at (-1,5.73);
\coordinate(d3) at (2.73,4.73);
\coordinate(d4) at (1,5.73);
\coordinate(d5) at (5.46,0);
\coordinate(d6) at (5.46,-2);
\coordinate(d7) at (4.46,-3.73);
\coordinate(d8) at (2.73,-4.73);
\coordinate(d9) at (-4.46,-3.73);
\coordinate(d10) at (-2.73,-4.73);
\coordinate(d11) at (-5.46,0);
\coordinate(d12) at (-5.46,-2);

\draw(d1)--(c1)--(d2);
\draw[dashed](d3)--(c2)--(d4);
\draw(c1)--(b1)--(c2);
\draw(b1)--(a1)--(b2);
\draw(a1)--(b3);
\draw(c5)--(b3)--(c6);
\draw(b1)--(a1)--(b2);
\draw(c3)--(b2)--(c4);
\draw(d7)--(c4)--(d8);
\draw[dashed](d5)--(c3)--(d6);
\draw[dashed](d9)--(c5)--(d10);
\draw[dashed](d11)--(c6)--(d12);

\foreach\i in{a1,b1,b2,b3,c1,c2,c3,c4,c5,c6,d1,d2,d7,d8}{\filldraw(\i)circle(0.08);}

\node[above right] at (a1){$v$};
\node[right] at (b1){$u_1$};
\node[below left] at (b2){$u_2$};
\node[below right] at (b3){$u_3$};
\node[below left] at (c1){$w_1$};
\node[below right] at (c2){$w_2$};
\node[below left] at (c3){$w_3$};
\node[below left] at (c4){$w_4$};
\node[below right] at (c5){$w_5$};
\node[above right] at (c6){$w_6$};

\node[above left] at (a1){$1$};
\node[left] at (b1){$1$};
\node[above] at (b2){$1$};
\node[above] at (b3){$0$};
\node[right] at (c1){$1$};
\node[left] at (c2){$1$};
\node[above] at (c3){$1$};
\node[above right] at (c4){$1$};
\node[above left] at (c5){$1$};
\node[below] at (c6){$1$};
\node[left] at (d1){$1$};
\node[left] at (d2){$1$};
\node[right] at (d7){$1$};
\node[left] at (d8){$1$};

    \end{tikzpicture}\vspace*{-2mm}
    \caption{An example of an initial state with $\Delta(G) \leq 3$ and $V_{0}^{(0)}(G) = \{u_3\}$. }
    \label{fig:Deg3Counter}
\end{figure}

Since Toggle is an impartial game played on a graph $G$ with a prescribed assignment $V_0^{(j)}$, we can assign a Nimber $\Nim(\mathcal{S})$ to each position $\mathcal{S} = \{G,V_0^{(j)}\}$. (Here we refer to $G$ as the \textit{Toggle graph} of the game.) Observe that any graph $G$ with assignment $V_0^{(j)}=V(G)$ has Nimber zero since the next player has no legal move, i.e.\ it is previous-player winning. We denote all possible positions that occur after a move is played on $\mathcal{S}$ by $\mathcal{S}_1,\mathcal{S}_2,\dots,\mathcal{S}_n$. In this case the Nimber of $\mathcal{S}$ is given by $$\mathcal{G}(\mathcal{S}) = \mex\{\Nim(\mathcal{S}_1),\Nim(\mathcal{S}_2),\dots,\Nim(\mathcal{S}_n)\}.$$
Furthermore, if $G = H + K$ (i.e.\ the disjoint union of graphs $H$ and $K$) and $V_H$ and $V_K$ denote the assignment $V$ on $G$ restricted to $H$ and $K$, then $$\mathcal{G}(\{G,V\}) = \mathcal{G}(\{H,V_H\}) \oplus \mathcal{G}(\{K,V_K\}).$$

\section{The Generalized Petersen Graph \boldmath{$P(m,1)$}}\label{sec:GPm1}

The purpose of this section is to calculate the Nimbers of the game of Toggle played on generalized Petersen graphs $P(m,1)$, $m \ge 3$. This problem quickly reduces to Toggle played on a $2 \times m$ lattice $\mathcal{L}_{2,m}$. We denote by $v_{i,j}$ the vertex in the $i^{th}$ row and $j^{th}$ column of $\mathcal{L}_{2,m}$ where, for future convenience, we assume $0 \leq i \leq 1$ and $1 \leq j \leq m$. Observe that the generalized Petersen graph $P(m,1)$ is equivalent to $\mathcal{L}_{2,m}$ if one adds the edges $v_{0,1}v_{0,m}$ and $v_{1,1}v_{1,m}$ to the latter. See Figure \ref{fig:lattice} which illustrates the case $m = 9$.

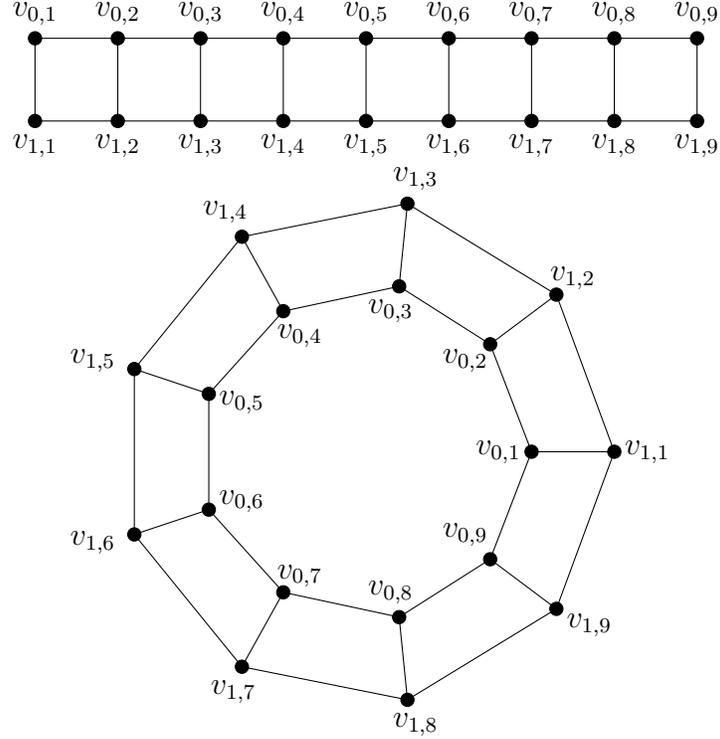
\begin{figure}[H]
    \centering
    \begin{tikzpicture}[scale=1.1]
        \coordinate(A) at (0,0);\coordinate(B) at (1,0);\coordinate(C) at (2,0);\coordinate(D) at (3,0);\coordinate(E) at (4,0);\coordinate(F) at (5,0);\coordinate(G) at (6,0);\coordinate(H) at (7,0);\coordinate(I) at (8,0);\foreach\i in{A,B,C,D,E,F,G,H,I}{\filldraw(\i)circle(0.08);}\draw(A)--(B)--(C)--(D)--(E)--(F)--(G)--(H)--(I);\node at (0,-.3){$v_{1,1}$};\node at (1,-.3){$v_{1,2}$};\node at (2,-.3) {$v_{1,3}$};\node at (3,-.3) {$v_{1,4}$};\node at (4,-.3){$v_{1,5}$};\node at (5,-.3){$v_{1,6}$};\node at (6,-.3) {$v_{1,7}$};\node at (7,-.3) {$v_{1,8}$};\node at (8,-.3) {$v_{1,9}$}; 

        \coordinate(a) at (0,1);\coordinate(b) at (1,1);\coordinate(c) at (2,1);\coordinate(d) at (3,1);\coordinate(e) at (4,1);\coordinate(f) at (5,1);\coordinate(g) at (6,1);\coordinate(h) at (7,1);\coordinate(i) at (8,1);\foreach\i in{a,b,c,d,e,f,g,h,i}{\filldraw(\i)circle(0.08);}\draw (a)--(b)--(c)--(d)--(e)--(f)--(g)--(h)--(i);\draw (A)--(a);\draw (B)--(b);\draw (C)--(c);\draw (D)--(d);\draw (E)--(e);\draw (F)--(f);\draw (G)--(g);\draw (H)--(h);\draw (I)--(i);
        \node at (0,1.3){$v_{0,1}$};\node at (1,1.3){$v_{0,2}$};\node at (2,1.3) {$v_{0,3}$};\node at (3,1.3) {$v_{0,4}$};\node at (4,1.3){$v_{0,5}$};\node at (5,1.3){$v_{0,6}$};\node at (6,1.3) {$v_{0,7}$};\node at (7,1.3) {$v_{0,8}$};\node at (8,1.3) {$v_{0,9}$};

        \coordinate(AA) at (7,-4);
        \coordinate(BB) at (6.3,-2.1);
        \coordinate(CC) at (4.5,-1);
        \coordinate(DD) at (2.5,-1.4);
        \coordinate(EE) at (1.2,-3);
        \coordinate(FF) at (1.2,-5);
        \coordinate(GG) at (2.5,-6.6);
        \coordinate(HH) at (4.5,-7);
        \coordinate(II) at (6.3,-5.9);
        \foreach\i in{AA,BB,CC,DD,EE,FF,GG,HH,II}{\filldraw (\i)circle(0.08);}
        \draw (AA)--(BB)--(CC)--(DD)--(EE)--(FF)--(GG)--(HH)--(II)--(AA);

        \coordinate(aa) at (6,-4);
        \coordinate(bb) at (5.5,-2.7);
        \coordinate(cc) at (4.4,-2);
        \coordinate(dd) at (3,-2.3);
        \coordinate(ee) at (2.1,-3.3);
        \coordinate(ff) at (2.1,-4.7);
        \coordinate(gg) at (3,-5.7);
        \coordinate(hh) at (4.4,-6);
        \coordinate(ii) at (5.5,-5.3);
        \foreach\i in{aa,bb,cc,dd,ee,ff,gg,hh,ii}{\filldraw (\i)circle(0.08);}
        \draw (aa)--(bb)--(cc)--(dd)--(ee)--(ff)--(gg)--(hh)--(ii)--(aa);

        \draw (AA)--(aa);\draw (BB)--(bb);\draw (CC)--(cc);\draw (DD)--(dd);\draw (EE)--(ee);\draw (FF)--(ff);\draw (GG)--(gg);\draw (HH)--(hh);\draw (II)--(ii);

        \node at (7.4,-4){$v_{1,1}$};
        \node at (6.5,-1.9){$v_{1,2}$};
        \node at (4.6,-.7){$v_{1,3}$};
        \node at (2.3,-1.1){$v_{1,4}$};
        \node at (0.7,-2.9){$v_{1,5}$};
        \node at (0.7,-5.1){$v_{1,6}$};
        \node at (2.4,-6.9){$v_{1,7}$};
        \node at (4.6,-7.3){$v_{1,8}$};
        \node at (6.7,-6.1){$v_{1,9}$};

        \node at (5.6,-4){$v_{0,1}$};
        \node at (5.2,-2.9){$v_{0,2}$};
        \node at (4.3,-2.3){$v_{0,3}$};
        \node at (3.2,-2.6){$v_{0,4}$};
        \node at (2.5,-3.4){$v_{0,5}$};
        \node at (2.5,-4.6){$v_{0,6}$};
        \node at (3.2,-5.5){$v_{0,7}$};
        \node at (4.3,-5.7){$v_{0,8}$};
        \node at (5.2,-5){$v_{0,9}$};
        
    \end{tikzpicture}\vspace*{-2mm}
    \caption{A labeling of the vertices of $\mathcal{L}_{2,9}$ (top) and $P(9,1)$.}
    \label{fig:lattice}
\end{figure}

Below we define initial assignments on $\mathcal{L}_{2,m}$ that will be central to what follows. Here we assume $m \geq 3$.
\begin{enumerate}[$(a)$]
    \item\label{item: Hm} For $m \neq 3$, let $\mathcal{H}_m = \{\mathcal{L}_{2,m},V_0^{(0)}\}$ where $V^{(0)}_0 = \{v_{0,1},v_{0,m},v_{1,1},v_{1,2},v_{1,m-1},v_{1,m}\}$, \\see Fig.\ \ref{fig:Hm}. For $m=3$, let $\mathcal{H}_3=\{\mathcal{L}_{2,3},V_0^{(0)}\}$ where $V_0^{(0)} = \{v_{0,1},v_{0,3},v_{1,1},v_{1,3}\}$.
    \item\label{item: Dm} Let $\mathcal{D}_m =\{\mathcal{L}_{2,m},V_0^{(0)}\}$ where $V^{(0)}_0 = \{v_{0,1},v_{0,m-1},v_{0,m},v_{1,1},v_{1,2},v_{1,m}\}$, see Fig.\ \ref{fig:Dm}.
    \item\label{item: Tm} Let $\mathcal{T}_m = \{\mathcal{L}_{2,m}, V^{(0)}_0\}$ where $V^{(0)}_0 = \{v_{0,m-1},v_{0,m},v_{1,m}\}$, see Fig.\ \ref{fig:Tm}.
\end{enumerate}

\begin{figure}[H]
    \centering
    \begin{tikzpicture}[scale=1.2]
        \coordinate(A) at (0,0);\coordinate(B) at (1,0);\coordinate(C) at (2,0);\coordinate(D) at (3,0);\coordinate(E) at (4,0);\coordinate(F) at (5,0);\coordinate(G) at (6,0);\coordinate(H) at (7,0);\coordinate(I) at (8,0);\foreach\i in{A,B,C,D,E,F,G,H,I}{\filldraw(\i)circle(0.08);}\draw (A)--(B)--(C)--(D)--(E)--(F)--(G)--(H)--(I);\node at (0,-.3){$0$};\node at (1,-.3){$0$};\node at (2,-.3) {$1$};\node at (3,-.3) {$1$};\node at (4,-.3){$1$};\node at (5,-.3){$1$};\node at (6,-.3) {$1$};\node at (7,-.3) {$0$};\node at (8,-.3) {$0$}; 

        \coordinate(a) at (0,1);\coordinate(b) at (1,1);\coordinate(c) at (2,1);\coordinate(d) at (3,1);\coordinate(e) at (4,1);\coordinate(f) at (5,1);\coordinate(g) at (6,1);\coordinate(h) at (7,1);\coordinate(i) at (8,1);\foreach\i in{a,b,c,d,e,f,g,h,i}{\filldraw(\i)circle(0.08);}\draw (a)--(b)--(c)--(d)--(e)--(f)--(g)--(h)--(i);\draw(A)--(a);\draw(B)--(b);\draw(C)--(c);\draw(D)--(d);\draw(E)--(e);\draw(F)--(f);\draw(G)--(g);\draw(H)--(h);\draw(I)--(i);
        \node at (0,1.3){$0$};\node at (1,1.3){$1$};\node at (2,1.3) {$1$};\node at (3,1.3) {$1$};\node at (4,1.3){$1$};\node at (5,1.3){$1$};\node at (6,1.3) {$1$};\node at (7,1.3) {$1$};\node at (8,1.3) {$0$};
    \end{tikzpicture}\vspace*{-2mm}
    \caption{$\mathcal{H}_9$ with $V^{(0)}_0=\{v_{0,1},v_{0,9},v_{1,1},v_{1,2},v_{1,8},v_{1,9}\}$.}
    \label{fig:Hm}
\end{figure}
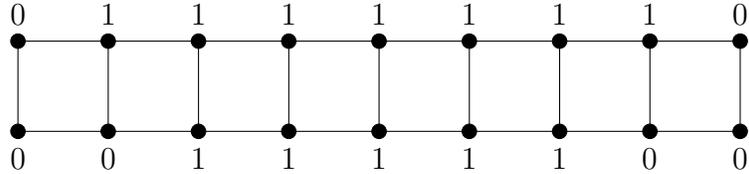

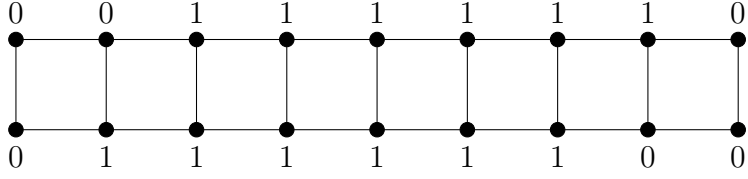
\begin{figure}[H]
    \centering
    \begin{tikzpicture}[scale=1.2]
        \coordinate(A) at (0,0);\coordinate(B) at (1,0);\coordinate(C) at (2,0);\coordinate(D) at (3,0);\coordinate(E) at (4,0);\coordinate(F) at (5,0);\coordinate(G) at (6,0);\coordinate(H) at (7,0);\coordinate(I) at (8,0);\foreach\i in{A,B,C,D,E,F,G,H,I}{\filldraw(\i)circle(0.08);}\draw (A)--(B)--(C)--(D)--(E)--(F)--(G)--(H)--(I);\node at (0,-.3){$0$};\node at (1,-.3){$1$};\node at (2,-.3) {$1$};\node at (3,-.3) {$1$};\node at (4,-.3){$1$};\node at (5,-.3){$1$};\node at (6,-.3) {$1$};\node at (7,-.3) {$0$};\node at (8,-.3) {$0$}; 

        \coordinate(a) at (0,1);\coordinate(b) at (1,1);\coordinate(c) at (2,1);\coordinate(d) at (3,1);\coordinate(e) at (4,1);\coordinate(f) at (5,1);\coordinate(g) at (6,1);\coordinate(h) at (7,1);\coordinate(i) at (8,1);\foreach\i in{a,b,c,d,e,f,g,h,i}{\filldraw(\i)circle(0.08);}\draw(a)--(b)--(c)--(d)--(e)--(f)--(g)--(h)--(i);\draw(A)--(a);\draw(B)--(b);\draw(C)--(c);\draw(D)--(d);\draw(E)--(e);\draw(F)--(f);\draw(G)--(g);\draw(H)--(h);\draw(I)--(i);
        \node at (0,1.3){$0$};\node at (1,1.3){$0$};\node at (2,1.3) {$1$};\node at (3,1.3) {$1$};\node at (4,1.3){$1$};\node at (5,1.3){$1$};\node at (6,1.3) {$1$};\node at (7,1.3) {$1$};\node at (8,1.3) {$0$};
    \end{tikzpicture}\vspace*{-2mm}
    \caption{$\mathcal{D}_9$ with $V^{(0)}_0=\{v_{0,1},v_{0,2},v_{0,9},v_{1,1},v_{1,8},v_{1,9}\}$.}
    \label{fig:Dm}
\end{figure}

\begin{figure}[H]
    \centering
    \begin{tikzpicture}[scale=1.2]
        \coordinate(A) at (0,0);\coordinate(B) at (1,0);\coordinate(C) at (2,0);\coordinate(D) at (3,0);\coordinate(E) at (4,0);\coordinate(F) at (5,0);\coordinate(G) at (6,0);\coordinate(H) at (7,0);\coordinate(I) at (8,0);\foreach\i in{A,B,C,D,E,F,G,H,I}{\filldraw(\i)circle(0.08);}\draw(A)--(B)--(C)--(D)--(E)--(F)--(G)--(H)--(I);\node at (0,-.3){$1$};\node at (1,-.3){$1$};\node at (2,-.3) {$1$};\node at (3,-.3) {$1$};\node at (4,-.3){$1$};\node at (5,-.3){$1$};\node at (6,-.3) {$1$};\node at (7,-.3) {$1$};\node at (8,-.3) {$0$}; 

        \coordinate(a) at (0,1);\coordinate(b) at (1,1);\coordinate(c) at (2,1);\coordinate(d) at (3,1);\coordinate(e) at (4,1);\coordinate(f) at (5,1);\coordinate(g) at (6,1);\coordinate(h) at (7,1);\coordinate(i) at (8,1);\foreach\i in{a,b,c,d,e,f,g,h,i}{\filldraw(\i)circle(0.08);}\draw(a)--(b)--(c)--(d)--(e)--(f)--(g)--(h)--(i);\draw(A)--(a);\draw[](B)--(b);\draw(C)--(c);\draw(D)--(d);\draw(E)--(e);\draw(F)--(f);\draw(G)--(g);\draw(H)--(h);\draw(I)--(i);
        \node at (0,1.3){$1$};\node at (1,1.3){$1$};\node at (2,1.3) {$1$};\node at (3,1.3) {$1$};\node at (4,1.3){$1$};\node at (5,1.3){$1$};\node at (6,1.3) {$1$};\node at (7,1.3) {$0$};\node at (8,1.3) {$0$};
    \end{tikzpicture}\vspace*{-2mm}
    \caption{$\mathcal{T}_9$ with $V^{(0)}_0=\{v_{0,m-1},v_{0,m},v_{1,m}\}$.}
    \label{fig:Tm}
\end{figure}
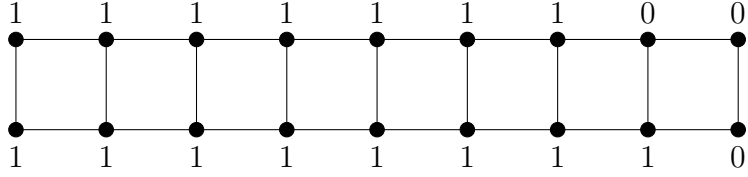

We first consider the Nimbers of Toggle played on $P(m,1)$, $m \geq 3$, with initial assignment $V_0^{(0)}=\emptyset$.

\begin{proposition}\label{prop:GP(m,1)H}
    \textcolor{black} For $V_0^{(0)}=\emptyset$, {$\Nim(\{P(m,1),V_0^{(0)}\}) = \mex\{x_1,x_2,\dots,x_{2m}\}$ where $x_i = \Nim(\mathcal{H}_{m+1})$ for all $i \in \{1,2,\dots,2m\}$.}
\end{proposition}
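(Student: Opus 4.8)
\emph{Reduction to a single first move.} In the initial position $\{P(m,1),\emptyset\}$ every vertex has weight $1$ and degree $3$, so for each $v$ we have $\sigma^{(0)}(v)=4$ and playing $v$ gives $\sigma^{(1)}(v)=0<4$; hence all $2m$ vertices are playable and there are exactly $2m$ first moves. Since $P(m,1)$ is vertex-transitive and the initial assignment is uniform, any two of the resulting positions are isomorphic as Toggle games (a graph automorphism carrying one played vertex to another is a weight-preserving isomorphism of positions), so they share a common Nimber $x$. By the mex formula, $\Nim(\{P(m,1),\emptyset\})=\mex\{x,x,\dots,x\}$ with $2m$ entries, which equals $\mex\{x_1,\dots,x_{2m}\}$ once we know $x_i=x$. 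Thus the whole statement reduces to proving $x=\Nim(\mathcal{H}_{m+1})$.

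\emph{Structure of the position after one move.} Fix the first move at $v_{1,1}$ in the labeling of Figure \ref{fig:lattice}, and call the resulting position $\mathcal{S}$; its weight-$0$ set is exactly $N[v_{1,1}]=\{v_{1,1},v_{1,2},v_{1,m},v_{0,1}\}$ and all other vertices have weight $1$. By Proposition \ref{prop:degree3penultimately}, $P(m,1)$ with the empty initial $0$-set is penultimately unplayable, so after the move at $v_{1,1}$ every vertex of $N[v_{1,1}]$ is terminally unplayable. In particular every neighbor of $v_{1,1}$ is terminally unplayable, so throughout any continuation of the game no move is ever made at $v_{1,1}$ or at any of its neighbors; hence $\omega^{(j)}(v_{1,1})=0$ for all $j$. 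A vertex that is permanently of weight $0$ and never playable contributes $0$ to the $\sigma$-value of each neighbor at every stage and affects no move, so deleting $v_{1,1}$ yields a position $\mathcal{S}'$ with $\Nim(\mathcal{S}')=\Nim(\mathcal{S})=x$.

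\emph{Identifying $\mathcal{S}$ with $\mathcal{H}_{m+1}$.} The plan is to exhibit a Toggle-game equivalence between $\mathcal{S}$ and $\mathcal{H}_{m+1}$ by ``unrolling'' the cylinder $P(m,1)$ along the rung $v_{1,1}v_{0,1}$. In $\mathcal{H}_{m+1}=\{\mathcal{L}_{2,m+1},V_0^{(0)}\}$ the two end-columns, at positions $1$ and $m+1$, each consist of the two weight-$0$ vertices $\{v_{0,1},v_{1,1}\}$ and $\{v_{0,m+1},v_{1,m+1}\}$, while the neighboring columns $2$ and $m$ have their row-$1$ vertex ($v_{1,2}$, resp.\ $v_{1,m}$) of weight $0$. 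Identifying column $m+1$ with column $1$ (that is, $v_{1,m+1}\mapsto v_{1,1}$ and $v_{0,m+1}\mapsto v_{0,1}$) turns $\mathcal{L}_{2,m+1}$ into $P(m,1)$ and carries the assignment $V_0^{(0)}$ of $\mathcal{H}_{m+1}$ exactly onto the weight-$0$ set $\{v_{1,1},v_{1,2},v_{1,m},v_{0,1}\}$ of $\mathcal{S}$ (the identified column has weights $(0,0)$ in both copies, so there is no conflict). It then remains to verify that this identification is a bijection between legal play sequences: for every reachable position and every vertex, the $\sigma$-comparison that decides whether a move is legal must have the same outcome in $\mathcal{H}_{m+1}$ as in $\mathcal{S}$. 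The only discrepancy between the two models lies at the identified column, where $v_{0,1},v_{0,m+1}$ (resp.\ $v_{1,1},v_{1,m+1}$) are distinct in the ladder but coincide in the cylinder. One argues by induction on the number of moves that in any play the vertices $v_{1,1},v_{0,1},v_{1,2},v_{1,m}$ (and their ladder preimages) stay terminally unplayable, and that a move near column $1$ on one side and a move near column $m$ on the other are separated by the long interior of the ladder and can only ``communicate'' through these permanently unplayable boundary vertices, so no such move ever changes a legality-deciding $\sigma$-comparison between the two models. This yields $\Nim(\mathcal{S})=\Nim(\mathcal{H}_{m+1})$, hence $x=\Nim(\mathcal{H}_{m+1})$, and the proposition follows.

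\emph{Main obstacle.} Steps one and two, and the description of the unrolling map, are routine; the real work is the last verification in step three, namely that the cycle edges of $P(m,1)$ that get severed when passing to $\mathcal{L}_{2,m+1}$ never carry playability-relevant information. This is precisely where penultimate unplayability of $P(m,1)$ (Proposition \ref{prop:degree3penultimately}) is essential: it pins down the local configuration around the played rung tightly enough that the severed adjacencies act only through terminally unplayable vertices and the cut is harmless. Making that ``no communication across the cut'' claim fully rigorous — a careful induction tracking terminal unplayability and the weights of the boundary vertices along arbitrary play — is the crux of the argument.
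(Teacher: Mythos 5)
Your overall strategy is the same as the paper's: use vertex-transitivity to reduce to a single first move, then argue that the resulting position on $P(m,1)$ is Toggle-equivalent to $\mathcal{H}_{m+1}$ by relating the two wrap-around edges at the played column to terminally unplayable vertices via Proposition \ref{prop:degree3penultimately}. The problem is that you do not actually carry out the decisive step: you state that ``it remains to verify that this identification is a bijection between legal play sequences'' and label the ``no communication across the cut'' claim as the crux, but you never prove it. As written, this is a proof plan with an acknowledged hole exactly where the content lies. The paper closes this hole with a short concrete device that you may want to adopt: instead of forming a quotient of $\mathcal{L}_{2,m+1}$, it \emph{subdivides} the edges $v_{0,1}v_{0,m}$ and $v_{1,1}v_{1,m}$ of $P(m,1)$ by new adjacent weight-$0$ vertices $v_{0,m+1},v_{1,m+1}$, observes that these are terminally unplayable because the reflection of the weighted graph interchanging columns $1$ and $m+1$ preserves all weights and carries them to $v_{0,1},v_{1,1}$ (terminally unplayable by Proposition \ref{prop:degree3penultimately}), and then deletes the two edges joining these pairs. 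Since the only vertices whose closed neighborhoods are altered by that deletion are four vertices that are permanently of weight $0$ and never played, no legality-deciding comparison $\sigma^{(j+1)}(v)<\sigma^{(j)}(v)$ is ever affected, and the result is literally $\mathcal{H}_{m+1}$. Working entirely on the cylinder side this way also spares you the separate verification, which your quotient formulation would require, that the end-column vertices of $\mathcal{H}_{m+1}$ are terminally unplayable in the \emph{ladder} game.

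A secondary but genuine error: in your second step you claim that deleting the permanently weight-$0$, never-played vertex $v_{1,1}$ preserves the Nimber because it ``contributes $0$ to the $\sigma$-value of each neighbor.'' That justification is insufficient. A move at $u$ is legal iff $\sigma^{(j+1)}(u)<\sigma^{(j)}(u)$, i.e.\ iff $\sigma^{(j)}(u)>\tfrac{1}{2}\left|N[u]\right|$; deleting a weight-$0$ neighbor of $u$ leaves $\sigma^{(j)}(u)$ unchanged but lowers $\left|N[u]\right|$, and hence lowers the playability threshold, so it can in principle turn an unplayable vertex into a playable one. (For instance, a weight-$1$ vertex of degree $3$ with exactly two weight-$1$ closed neighbors is unplayable, but becomes playable after one weight-$0$ neighbor is deleted.) You never use $\mathcal{S}'$ afterwards, so this does not derail the main line, but the claim should either be removed or be given a correct argument that also tracks the change in $\left|N[u]\right|$.
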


\begin{proof}
    With initial assignment $V_0^{(0)}=\emptyset$, all resulting assignments after the initial move on $P(m,1)$ are equivalent by symmetry, so without loss of generality assume the initial move is made at $v_{0,1}$. By Proposition \ref{prop:degree3penultimately}, $v_{0,1}$, and $v_{1,1}$ are terminally unplayable. We subdivide the edges $v_{0,1}v_{0,m}$ and $v_{1,1}v_{1,m}$ where the new internal vertices,   $v_{0,m+1}$ and $v_{1,m+1}$, are adjacent and have weight $0$. Note that this does not affect the Toggle game, and that $v_{0,m+1}$ and $v_{1,m+1}$ are terminally unplayable since all weights remain unchanged when reflecting the graph in such a manner that $v_{0,m+1}$ and $v_{1,m+1}$ become $v_{0,1}$ and $v_{1,1}$, respectively. We next remove the edges $v_{0,1}v_{0,m+1}$ and $v_{1,1}v_{1,m+1}$, resulting in the lattice $\mathcal{H}_{m+1}$. Thus, $P(m,1)$ with initial assignment $V_0^{(0)}=\emptyset$ reduces to $\mathcal{H}_{m+1}$ after any initial move. It therefore follows that $\Nim(\{P(m,1),\emptyset\}) = \mex\{x_1,x_2,\dots,x_{2m}\}$ where $x_i = \Nim(\mathcal{H}_{m+1})$ as claimed.
\end{proof}

\begin{remark}
    Note that since $x_i= \Nim(\mathcal{H}_{m+1})$ for all $i$,  Proposition \ref{prop:GP(m,1)H} is equivalent to $\Nim(\{P(m,1),\emptyset\})=0$ if $\Nim(\mathcal{H}_{m+1}) \ge 1$ and $\Nim(\{P(m,1),\emptyset\})=1$ if $\Nim(\mathcal{H}_{m+1}) = 0$.
\end{remark}

\begin{proposition}\label{prop:nimH}
    $\Nim(\mathcal{H}_m) = \mex\{x_3,x_4,\dots,x_s,y_3,y_4,\dots,y_s\}$ where $x_i=\Nim(\mathcal{H}_i) \oplus \Nim(\mathcal{H}_{m+1-i})$ and $y_i=\Nim(\mathcal{D}_i)\oplus \Nim(\mathcal{D}_{m+1-i})$ for $i \in \{3,4,\dots,s\}$ and $s=\lfloor\frac{m+1}{2}\rfloor$.
    
\end{proposition}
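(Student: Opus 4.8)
The plan is to enumerate the legal opening moves on $\mathcal{H}_m$, show that each one splits the board into two independent smaller Toggle games of type $\mathcal{H}$ or $\mathcal{D}$, and then invoke the mex and XOR rules. (The cases $m\le 5$ reduce to finite checks, so assume $m\ge 6$.) First I would compute $\sigma^{(0)}(v)$ for every lit vertex $v$ of $\mathcal{H}_m$; since such a $v$ is playable exactly when $\sigma^{(0)}(v)>\tfrac12\,|N[v]|$, comparison with the assignment defining $\mathcal{H}_m$ shows the playable vertices are precisely $v_{0,j}$ and $v_{1,j}$ for $j\in\{3,4,\dots,m-2\}$ (a short calculation rules out $v_{0,2}$ and $v_{0,m-1}$, the only other lit vertices, and the remaining off vertices are not candidates). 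Then I would exploit the reflection $v_{i,j}\mapsto v_{i,m+1-j}$, which is an automorphism of $\mathcal{H}_m$ fixing the assignment: playing $v_{a,j}$ and playing $v_{a,m+1-j}$ lead to isomorphic positions with equal Nimbers, so the set of Nimbers reachable in one move equals $\{\Nim_{0,j},\Nim_{1,j}:3\le j\le s\}$ with $s=\lfloor\frac{m+1}{2}\rfloor$, where $\Nim_{a,j}$ denotes the Nimber after playing $v_{a,j}$.

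The heart of the argument is to show $\Nim_{1,j}=\Nim(\mathcal{H}_j)\oplus\Nim(\mathcal{H}_{m+1-j})=x_j$ and $\Nim_{0,j}=\Nim(\mathcal{D}_j)\oplus\Nim(\mathcal{D}_{m+1-j})=y_j$. Playing $v_{1,j}$ switches off $v_{1,j-1},v_{1,j},v_{1,j+1},v_{0,j}$, creating a wall of zeros through column $j$; mimicking the proof of Proposition \ref{prop:degree3penultimately}, I would show that the whole $2\times 3$ block on columns $j-1,j,j+1$ becomes terminally unplayable and that $v_{0,j},v_{1,j}$ are thereafter frozen off. Consequently every later move lies on columns $\{1,\dots,j-2\}$ or on columns $\{j+2,\dots,m\}$, and a move of the first kind only touches columns $\le j-1$ while one of the second kind only touches columns $\ge j+1$; column $j$ never changes. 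Duplicating the (frozen, terminally unplayable) column $j$ therefore realizes the position as the disjoint union of a Toggle game on the lattice induced by columns $1,\dots,j$ and one on the lattice induced by columns $j,\dots,m$; reading off the weights identifies the first with $\mathcal{H}_j$ (the special $j=3$ pattern being exactly what appears) and, after the reflection, the second with $\mathcal{H}_{m+1-j}$. The XOR rule then gives $\Nim_{1,j}=x_j$. The move $v_{0,j}$ is handled identically: the zeros now land on $v_{0,j-1},v_{0,j},v_{0,j+1},v_{1,j}$, the same seam analysis shows columns $j-1,j,j+1$ become terminally unplayable with column $j$ frozen, and the two pieces are $\mathcal{D}_j$ (on columns $1,\dots,j$) and a reflection of $\mathcal{D}_{m+1-j}$ (on columns $j,\dots,m$), so $\Nim_{0,j}=y_j$.

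Combining the two previous paragraphs, $\Nim(\mathcal{H}_m)=\mex\{\Nim_{0,j},\Nim_{1,j}:3\le j\le s\}=\mex\{x_3,\dots,x_s,y_3,\dots,y_s\}$. I expect the genuine obstacle to be the seam analysis: the assertion that the two halves never interact is visually obvious from the wall of zeros, but a rigorous proof needs an inductive argument in the spirit of Propositions \ref{prop:PathCycle} and \ref{prop:degree3penultimately}, tracing the only chains of moves that could ``reactivate'' a seam vertex and showing each such chain must terminate against the outer boundary (columns $1,2$ or $m-1,m$) before it can reach column $j$. A lesser but real nuisance is the bookkeeping confirming that the two pieces are literally $\mathcal{H}_i$ or $\mathcal{D}_i$ for the correct $i$ --- in particular checking consistency with the special definition of $\mathcal{H}_3$ and with $\mathcal{D}_3$ being the all-zero board --- together with disposing of the small cases $m\le 5$, where some of the listed $x_j,y_j$ vanish and coincide, so the displayed mex is still correct even though a few of the corresponding moves are absent.
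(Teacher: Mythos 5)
Your proposal follows essentially the same route as the paper: enumerate the moves at $v_{0,i}$ and $v_{1,i}$ for $i\in\{3,\dots,m-2\}$, use the reflection symmetry to restrict to $i\le s$, show the seam column becomes terminally unplayable (via Proposition \ref{prop:degree3penultimately}) so the position decomposes into $\mathcal{H}_i+\mathcal{H}_{m+1-i}$ or $\mathcal{D}_i+\mathcal{D}_{m+1-i}$, and apply the XOR and mex rules. Your version is somewhat more explicit about the playability check and the identification of the resulting assignments (including the special case $\mathcal{H}_3$), but the decomposition and the key appeal to terminal unplayability of the shared column are exactly the paper's argument.
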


\begin{proof}
    First suppose the initial move on $\mathcal{H}_m$ is made at $v_{1,i}$ for some $i \in \{3,\dots,m-2\}$. This results in the two assignments $\mathcal{H}_i$ and $\mathcal{H}_{m+1-i}$ where $V(\mathcal{H}_i)=\{v_{0,1},\dots,v_{0,i},v_{1,1},\dots,v_{1,i}\}$ and $V(\mathcal{H}_{m+1-i})=\{v_{0,i},\dots,v_{0,m},v_{1,i}\dots,v_{1,m}\}$. Note that $V(\mathcal{H}_i) \cap V(\mathcal{H}_{m+1-i}) = \{v_{0,i},v_{1,i}\}$. Note that this argument also applies when the initial move is made at $v_{0,i}$. However, in this case we obtain $\mathcal{D}_i$ and $\mathcal{D}_{m+1-i}$ as the resulting components. By Proposition \ref{prop:degree3penultimately}, both $v_{0,i}$ and $v_{1,i}$ are terminally unplayable, hence a Toggle move at $v_{1,i}$ (resp.\ $v_{0,i}$) results in an assignment that is equivalent to that of a graph with components $\mathcal{H}_i$ and $\mathcal{H}_{m+1-i}$ (resp.\ $\mathcal{D}_i$ and $\mathcal{D}_{m+1-i}$). This gives all possible assignments reachable after a single Toggle move on $\mathcal{H}_m$, hence $\Nim(\mathcal{H}_m) = \mex\{x_3,x_4,\dots,x_s,y_3,y_4,\dots,y_s\}$ where $x_i=\Nim(\mathcal{H}_i) \oplus \Nim(\mathcal{H}_{m+1-i})$ and $y_i=\Nim(\mathcal{D}_i)\oplus \Nim(\mathcal{D}_{m+1-i})$ for $i \in \{3,4,\dots,s\}$ and $s=\lfloor\frac{m+1}{2}\rfloor$.
\end{proof}

\begin{proposition}\label{prop:nimD}
    $\Nim(\mathcal{D}_m) = \mex\{x_3,x_4,\dots,x_{m-2}\}$ where $x_i=\Nim(\mathcal{H}_i)\oplus\Nim(\mathcal{D}_{m+1-i})$ for $i \in \{3,4,\dots,m-2\}$.
\end{proposition}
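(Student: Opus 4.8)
The plan is to follow the template of the proof of Proposition~\ref{prop:nimH}. The first step is to pin down the playable vertices of $\mathcal{D}_m$. For $m\le 4$ the configuration $\mathcal{D}_m$ has no playable vertex (indeed $\mathcal{D}_3$ is the all-off position, and $\mathcal{D}_4$ has its two on-vertices with all neighbors off), so $\Nim(\mathcal{D}_m)=0=\mex\varnothing$ and the index set $\{3,\dots,m-2\}$ is empty; hence assume $m\ge 5$. One then checks directly that columns $1$ and $m$ are entirely off, that the unique on-vertex of column $2$ (namely $v_{0,2}$) has $\sigma^{(0)}(v_{0,2})=2$ with playing it leaving this quantity unchanged, and symmetrically for $v_{1,m-1}$, so none of these is playable; while for $3\le i\le m-2$ both $v_{0,i}$ and $v_{1,i}$ have closed neighborhood of weight at least $3$, hence are playable. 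Thus the one-move options of $\mathcal{D}_m$ are exactly those obtained by playing some $v_{0,i}$ or $v_{1,i}$ with $3\le i\le m-2$.

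Second, I analyze the position reached by playing $v_{1,i}$. This move sends both vertices of column $i$ to weight $0$, and by the argument used in the proof of Proposition~\ref{prop:nimH} (which rests on Propositions~\ref{prop:PathCycle} and \ref{prop:degree3penultimately}), those two vertices are terminally unplayable afterward; consequently the resulting position splits as the disjoint union of the induced positions on columns $1,\dots,i$ and on columns $i,\dots,m$. A routine check of the weights shows the first summand is $\mathcal{H}_i$ and the second is, after the obvious left-to-right relabeling, $\mathcal{D}_{m+1-i}$. Hence playing $v_{1,i}$ gives Nimber $\Nim(\mathcal{H}_i)\oplus\Nim(\mathcal{D}_{m+1-i})=x_i$, and as $i$ ranges over $\{3,\dots,m-2\}$ these moves realize every $x_i$.

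Third, I handle the moves at $v_{0,i}$ via symmetry: unlike $\mathcal{H}_m$, the configuration $\mathcal{D}_m$ is invariant under the glide-reflection automorphism $\tau$ of $\mathcal{L}_{2,m}$ defined by $\tau(v_{0,j})=v_{1,m+1-j}$ and $\tau(v_{1,j})=v_{0,m+1-j}$; one verifies that $\tau$ fixes $V_0^{(0)}(\mathcal{D}_m)$ setwise. Therefore playing $v_{0,i}$ produces a position isomorphic to the one produced by playing $\tau(v_{0,i})=v_{1,m+1-i}$, whose Nimber is $x_{m+1-i}$ by the previous step. (Alternatively, one checks directly that playing $v_{0,i}$ yields $\mathcal{D}_i$ together with a copy of $\mathcal{H}_{m+1-i}$ with its two rows interchanged, which has the same Nimber.) Since $i\mapsto m+1-i$ is an involution of $\{3,\dots,m-2\}$, the Nimbers of the $v_{0,i}$-options also lie in $\{x_3,\dots,x_{m-2}\}$. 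Combining, the set of Nimbers of the one-move options of $\mathcal{D}_m$ is exactly $\{x_3,x_4,\dots,x_{m-2}\}$, and the mex rule yields $\Nim(\mathcal{D}_m)=\mex\{x_3,x_4,\dots,x_{m-2}\}$.

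The main obstacle is the decomposition step: showing rigorously that once column $i$ has been zeroed it remains terminally unplayable under all subsequent play, so that the two halves really do behave as independent subgames carrying precisely the assignments $\mathcal{H}_i$ and $\mathcal{D}_{m+1-i}$. This is the same technical core as in Proposition~\ref{prop:nimH}, and it---together with the slightly exceptional behavior of columns $2$ and $m-1$ and the small cases $m\in\{5,6\}$---must be checked with care. The point of the glide-reflection observation is precisely that it forces the $\mathcal{D}$-recursion to collapse onto the single family $x_i$, whereas the $\mathcal{H}$-recursion additionally spawns the $y_i$-terms.
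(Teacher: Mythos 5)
Your proof is correct and follows essentially the same route as the paper's: identify the playable vertices, use terminal unplayability of the toggled column to split the position into an $\mathcal{H}$-piece and a $\mathcal{D}$-piece, and apply the mex rule. You are in fact somewhat more complete than the paper, which only analyzes the moves at $v_{0,i}$; your glide-reflection observation (that $\mathcal{D}_m$, unlike $\mathcal{H}_m$, is invariant under $v_{i,j}\mapsto v_{1-i,m+1-j}$, so the $v_{1,i}$-options duplicate the $v_{0,i}$-options after reindexing $i\mapsto m+1-i$) is exactly the missing justification for why the option set collapses to the single family $\{x_3,\dots,x_{m-2}\}$.
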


\begin{proof}
    Suppose the initial Toggle move on $\mathcal{D}_m$ is made at $v_{0,i}$. This results in two assignments $\mathcal{H}_i$ and $\mathcal{D}_{m+1-i}$, where $V(\mathcal{H}_i)=\{v_{0,1},\dots,v_{0,i},v_{1,1},\dots,v_{1,i}\}$ and $V(\mathcal{D}_{m+1-i})=\{v_{0,i},\dots,v_{0,m},v_{1,i},\dots,v_{1,m}\}$. By Proposition \ref{prop:degree3penultimately}, $v_{0,i}$ and $v_{1,i}$ are terminally unplayable, hence a Toggle move on $v_{0,i}$ results in an assignment that is equivalent to that of a graph with components $\mathcal{H}_i$ and $\mathcal{D}_{m+1-i}$. This gives all possible assignments reachable after a single Toggle move on $\mathcal{D}_m$, hence $\Nim(\mathcal{D}_m) = \mex\{x_3,x_4,\dots,x_{m-2}\}$ where $x_i=\Nim(\mathcal{H}_i)\oplus\Nim(\mathcal{D}_{m+1-i})$ for $i \in \{3,4,\dots,m-2\}$.
\end{proof}

Propositions \ref{prop:GP(m,1)H}, \ref{prop:nimH}, and \ref{prop:nimD} allow one to recursively calculate the Nimber $\mathcal{G}(P(m,1))$. The code that computes these Nimbers is included in Appendix A.

\section{The Generalized Petersen Graph \boldmath{$P(m,k)$}}\label{sec:GPmk}

\begin{definition}\label{def:P(m,k)}
   Let $P(m,k)$ ($2 \le k < m$) be the generalized Petersen graph, that is, a connected $3$-regular graph consisting of an outer $m$-cycle $\{m,1\}$ and an inner star polygon $\{m,k\}$ with edges adjoining corresponding vertices in the inner and outer graphs. Here the notation $\{m,k\}$ reflects the fact that the inner star polygon is the distance-$k$ graph of an $m$-cycle which may or may not be connected.
   (Note that we may assume $k \leq \floor{\frac{m}{2}}$ because $P(m, k) \cong P(m, m-k)$.)
\end{definition}

We denote the vertices of $\{m,k\}$ by $V_{\{m,k\}} = \{v_{0,1},v_{0,2},\dots, v_{0,m}\}$ and those of $\{m,1\}$ by $V_{\{m,1\}} = \{v_{1,1},v_{1,2},\dots, v_{1,m}\}$. See Fig.\ \ref{fig:GP92} where the case $m=9$, $k=2$ is depicted.
We introduce the following notation for initial assignments on $P(m,k)$:
\vspace{-2mm}
 \begin{enumerate}
    \item\label{item: GP01} $\mathcal{P}_{0,1}(m,k)$ refers to $P(m,k)$ where $V^{(0)}_0\big(P(m,k)\big) = V_{\{m,k\}}$ and $V^{(0)}_1\big(P(m,k)\big) = V_{\{m,1\}}$.
     \item\label{item: GP10} $\mathcal{P}_{1,0}(m,k)$ refers to $P(m,k)$ where $V^{(0)}_1\big(P(m,k)\big) = V_{\{m,k\}}$ and 
$V^{(0)}_0\big(P(m,k)\big) = V_{\{m,1\}}$.
     \item\label{item: GP11} $\mathcal{P}_{1,1}(m,k)$ refers to $P(m,k)$ where $V^{(0)}_0\big(P(m,k)\big) = \emptyset$.
\end{enumerate}

See Figures \ref{fig:GP01}, \ref{fig:GP10}, \ref{fig:GP11} where the vertex weights for the cases 1, 2, 3 with $m=9$ and $k=2$ are indicated.

\begin{figure}[H]
    \centering
    \begin{tikzpicture}[scale=1.1]
        \coordinate(AA) at (7,-4);
        \coordinate(BB) at (6.3,-2.1);
        \coordinate(CC) at (4.5,-1);
        \coordinate(DD) at (2.5,-1.4);
        \coordinate(EE) at (1.2,-3);
        \coordinate(FF) at (1.2,-5);
        \coordinate(GG) at (2.5,-6.6);
        \coordinate(HH) at (4.5,-7);
        \coordinate(II) at (6.3,-5.9);
        \foreach\i in{AA,BB,CC,DD,EE,FF,GG,HH,II}{\filldraw(\i)circle(0.08);}
        \draw(AA)--(BB)--(CC)--(DD)--(EE)--(FF)--(GG)--(HH)--(II)--(AA);

        \coordinate(aa) at (6,-4);
        \coordinate(bb) at (5.5,-2.7);
        \coordinate(cc) at (4.4,-2);
        \coordinate(dd) at (3,-2.3);
        \coordinate(ee) at (2.1,-3.3);
        \coordinate(ff) at (2.1,-4.7);
        \coordinate(gg) at (3,-5.7);
        \coordinate(hh) at (4.4,-6);
        \coordinate(ii) at (5.5,-5.3);
        \foreach\i in{aa,bb,cc,dd,ee,ff,gg,hh,ii}{\filldraw(\i)circle(0.08);}
        \draw(aa)--(cc)--(ee)--(gg)--(ii)--(bb)--(dd)--(ff)--(hh)--(aa);

        \draw(AA)--(aa);\draw(BB)--(bb);\draw(CC)--(cc);\draw(DD)--(dd);\draw(EE)--(ee);\draw(FF)--(ff);\draw(GG)--(gg);\draw(HH)--(hh);\draw(II)--(ii);

        \node at (7.4,-4){$v_{1,1}$};
        \node at (6.5,-1.9){$v_{1,2}$};
        \node at (4.6,-.7){$v_{1,3}$};
        \node at (2.3,-1.1){$v_{1,4}$};
        \node at (0.7,-2.9){$v_{1,5}$};
        \node at (0.7,-5.1){$v_{1,6}$};
        \node at (2.4,-6.9){$v_{1,7}$};
        \node at (4.6,-7.3){$v_{1,8}$};
        \node at (6.7,-6.1){$v_{1,9}$};

        \node at (6.2,-3.8){$v_{0,1}$};
        \node at (5.9,-2.9){$v_{0,2}$};
        \node at (4.8,-2.1){$v_{0,3}$};
        \node at (2.6,-2.4){$v_{0,4}$};
        \node at (1.8,-3.6){$v_{0,5}$};
        \node at (2,-5){$v_{0,6}$};
        \node at (2.6,-5.7){$v_{0,7}$};
        \node at (4,-6.2){$v_{0,8}$};
        \node at (5.4,-5.6){$v_{0,9}$};
        
    \end{tikzpicture}\vspace*{-2mm}
    \caption{A labeling of the vertices of $P(9,2)$.}
    \label{fig:GP92}
\end{figure}
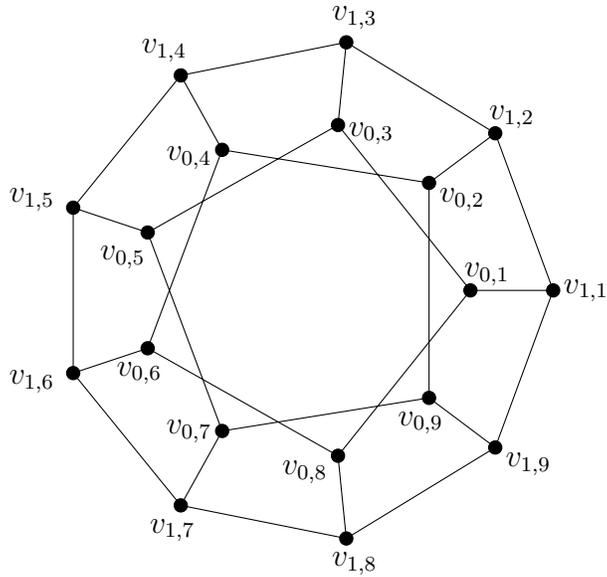

\begin{figure}[H]
    \centering
    \begin{tikzpicture}[scale=1.1]
        \coordinate(AA) at (7,-4);
        \coordinate(BB) at (6.3,-2.1);
        \coordinate(CC) at (4.5,-1);
        \coordinate(DD) at (2.5,-1.4);
        \coordinate(EE) at (1.2,-3);
        \coordinate(FF) at (1.2,-5);
        \coordinate(GG) at (2.5,-6.6);
        \coordinate(HH) at (4.5,-7);
        \coordinate(II) at (6.3,-5.9);
        \foreach\i in{AA,BB,CC,DD,EE,FF,GG,HH,II}{\filldraw(\i)circle(0.08);}
        \draw(AA)--(BB)--(CC)--(DD)--(EE)--(FF)--(GG)--(HH)--(II)--(AA);

        \coordinate(aa) at (6,-4);
        \coordinate(bb) at (5.5,-2.7);
        \coordinate(cc) at (4.4,-2);
        \coordinate(dd) at (3,-2.3);
        \coordinate(ee) at (2.1,-3.3);
        \coordinate(ff) at (2.1,-4.7);
        \coordinate(gg) at (3,-5.7);
        \coordinate(hh) at (4.4,-6);
        \coordinate(ii) at (5.5,-5.3);
        \foreach\i in{aa,bb,cc,dd,ee,ff,gg,hh,ii}{\filldraw(\i)circle(0.08);}
        \draw(aa)--(cc)--(ee)--(gg)--(ii)--(bb)--(dd)--(ff)--(hh)--(aa);

        \draw(AA)--(aa);\draw(BB)--(bb);\draw(CC)--(cc);\draw(DD)--(dd);\draw(EE)--(ee);\draw(FF)--(ff);\draw(GG)--(gg);\draw(HH)--(hh);\draw(II)--(ii);

        \node at (7.4,-4){$1$};
        \node at (6.5,-1.9){$1$};
        \node at (4.6,-.7){$1$};
        \node at (2.3,-1.1){$1$};
        \node at (0.7,-2.9){$1$};
        \node at (0.7,-5.1){$1$};
        \node at (2.4,-6.9){$1$};
        \node at (4.6,-7.3){$1$};
        \node at (6.7,-6.1){$1$};

        \node at (6,-3.7){$0$};
        \node at (5.5,-2.4){$0$};
        \node at (4.2,-1.8){$0$};
        \node at (2.7,-2.3){$0$};
        \node at (1.9,-3.5){$0$};
        \node at (2,-5){$0$};
        \node at (3.1,-6){$0$};
        \node at (4.6,-6.1){$0$};
        \node at (5.7,-5.1){$0$};
        
    \end{tikzpicture}\vspace*{-2mm}
    \caption{$\mathcal{P}_{0,1}(9,2)$ where $\omega^{(0)}(v_{1,i})=1$ and $\omega^{(0)}(v_{0,i})=0$, $i=\{1,2,\dots,9\}$.}
    \label{fig:GP01}
\end{figure}

\begin{figure}[H]
    \centering
    \begin{tikzpicture}[scale=1.1]
        \coordinate(AA) at (7,-4);
        \coordinate(BB) at (6.3,-2.1);
        \coordinate(CC) at (4.5,-1);
        \coordinate(DD) at (2.5,-1.4);
        \coordinate(EE) at (1.2,-3);
        \coordinate(FF) at (1.2,-5);
        \coordinate(GG) at (2.5,-6.6);
        \coordinate(HH) at (4.5,-7);
        \coordinate(II) at (6.3,-5.9);
        \foreach\i in{AA,BB,CC,DD,EE,FF,GG,HH,II}{\filldraw(\i)circle(0.08);}
        \draw(AA)--(BB)--(CC)--(DD)--(EE)--(FF)--(GG)--(HH)--(II)--(AA);

        \coordinate(aa) at (6,-4);
        \coordinate(bb) at (5.5,-2.7);
        \coordinate(cc) at (4.4,-2);
        \coordinate(dd) at (3,-2.3);
        \coordinate(ee) at (2.1,-3.3);
        \coordinate(ff) at (2.1,-4.7);
        \coordinate(gg) at (3,-5.7);
        \coordinate(hh) at (4.4,-6);
        \coordinate(ii) at (5.5,-5.3);
        \foreach\i in{aa,bb,cc,dd,ee,ff,gg,hh,ii}{\filldraw(\i)circle(0.08);}
        \draw(aa)--(cc)--(ee)--(gg)--(ii)--(bb)--(dd)--(ff)--(hh)--(aa);

        \draw(AA)--(aa);\draw(BB)--(bb);\draw(CC)--(cc);\draw(DD)--(dd);\draw(EE)--(ee);\draw(FF)--(ff);\draw(GG)--(gg);\draw(HH)--(hh);\draw(II)--(ii);

        \node at (7.4,-4){$0$};
        \node at (6.5,-1.9){$0$};
        \node at (4.6,-.7){$0$};
        \node at (2.3,-1.1){$0$};
        \node at (0.7,-2.9){$0$};
        \node at (0.7,-5.1){$0$};
        \node at (2.4,-6.9){$0$};
        \node at (4.6,-7.3){$0$};
        \node at (6.7,-6.1){$0$};

        \node at (6,-3.7){$1$};
        \node at (5.5,-2.4){$1$};
        \node at (4.2,-1.8){$1$};
        \node at (2.7,-2.3){$1$};
        \node at (1.9,-3.5){$1$};
        \node at (2,-5){$1$};
        \node at (3.1,-6){$1$};
        \node at (4.6,-6.1){$1$};
        \node at (5.7,-5.1){$1$};
        
    \end{tikzpicture}\vspace*{-2mm}
    \caption{$\mathcal{P}_{1,0}(9,2)$ where $\omega^{(0)}(v_{1,i})=0$ and $\omega^{(0)}(v_{0,i})=1$, $i=\{1,2,\dots,9\}$.}
    \label{fig:GP10}
\end{figure}

\begin{figure}[H]
    \centering
    \begin{tikzpicture}[scale=1.1]
        \coordinate(AA) at (7,-4);
        \coordinate(BB) at (6.3,-2.1);
        \coordinate(CC) at (4.5,-1);
        \coordinate(DD) at (2.5,-1.4);
        \coordinate(EE) at (1.2,-3);
        \coordinate(FF) at (1.2,-5);
        \coordinate(GG) at (2.5,-6.6);
        \coordinate(HH) at (4.5,-7);
        \coordinate(II) at (6.3,-5.9);
        \foreach\i in{AA,BB,CC,DD,EE,FF,GG,HH,II}{\filldraw(\i)circle(0.08);}
        \draw(AA)--(BB)--(CC)--(DD)--(EE)--(FF)--(GG)--(HH)--(II)--(AA);

        \coordinate(aa) at (6,-4);
        \coordinate(bb) at (5.5,-2.7);
        \coordinate(cc) at (4.4,-2);
        \coordinate(dd) at (3,-2.3);
        \coordinate(ee) at (2.1,-3.3);
        \coordinate(ff) at (2.1,-4.7);
        \coordinate(gg) at (3,-5.7);
        \coordinate(hh) at (4.4,-6);
        \coordinate(ii) at (5.5,-5.3);
        \foreach\i in{aa,bb,cc,dd,ee,ff,gg,hh,ii}{\filldraw(\i)circle(0.08);}
        \draw(aa)--(cc)--(ee)--(gg)--(ii)--(bb)--(dd)--(ff)--(hh)--(aa);

        \draw(AA)--(aa);\draw(BB)--(bb);\draw(CC)--(cc);\draw(DD)--(dd);\draw(EE)--(ee);\draw(FF)--(ff);\draw(GG)--(gg);\draw(HH)--(hh);\draw(II)--(ii);

        \node at (7.4,-4){$1$};
        \node at (6.5,-1.9){$1$};
        \node at (4.6,-.7){$1$};
        \node at (2.3,-1.1){$1$};
        \node at (0.7,-2.9){$1$};
        \node at (0.7,-5.1){$1$};
        \node at (2.4,-6.9){$1$};
        \node at (4.6,-7.3){$1$};
        \node at (6.7,-6.1){$1$};

        \node at (6,-3.7){$1$};
        \node at (5.5,-2.4){$1$};
        \node at (4.2,-1.8){$1$};
        \node at (2.7,-2.3){$1$};
        \node at (1.9,-3.5){$1$};
        \node at (2,-5){$1$};
        \node at (3.1,-6){$1$};
        \node at (4.6,-6.1){$1$};
        \node at (5.7,-5.1){$1$};
        
    \end{tikzpicture}\vspace*{-2mm}
    \caption{$\mathcal{P}_{1,1}(9,2)$ where $\omega^{(0)}(v_{1,i})=\omega^{(0)}(v_{0,i})=1$, $i=\{1,2,\dots,9\}$.}
    \label{fig:GP11}
\end{figure}

The following result is due to Steimle and Staton in \cite{SS09}.

\begin{theorem}\label{thm:Steimle}\leavevmode
    \begin{enumerate}
        \item Let $m \ge 5$ and $\gcd(m,k)=\gcd(m,\ell)=1$ for $2 \le k,\ell \le m-2$. If $P(m,k) \cong P(m,\ell)$, then either $\ell \mod{\pm k} m$ or $k\ell \mod{\pm 1} m$.
        \item Let $m > 3$ and $k,\ell$ relatively prime to $m$ with $k\ell \mod{1} m$. Then $P(m,k) \cong P(m,\ell)$.
    \end{enumerate}
\end{theorem}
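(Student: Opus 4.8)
The result has two halves, which I would treat separately; the second (that the congruence $k\ell\equiv1\pmod m$ suffices) is the easy constructive half. For it, write the outer vertices of $P(m,k)$ as $a_0,\dots,a_{m-1}$ with outer edges $a_ia_{i+1}$, the inner vertices as $b_0,\dots,b_{m-1}$ with inner edges $b_ib_{i+k}$, and spokes $a_ib_i$; use $c_j,d_j$ in the same way for $P(m,\ell)$, all subscripts read mod $m$. Since $\gcd(m,\ell)=1$, the inner star polygon $\{m,\ell\}$ of $P(m,\ell)$ is a single $m$-cycle $d_0\,d_\ell\,d_{2\ell}\cdots d_{(m-1)\ell}$. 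I would define $\phi\colon V(P(m,k))\to V(P(m,\ell))$ by $\phi(a_i)=d_{i\ell}$ and $\phi(b_i)=c_{i\ell}$ (a bijection because $\gcd(m,\ell)=1$) and then verify that it sends edges to edges: outer edges $a_ia_{i+1}\mapsto d_{i\ell}d_{i\ell+\ell}$, an edge of the $\ell$-star; inner edges $b_ib_{i+k}\mapsto c_{i\ell}c_{i\ell+k\ell}=c_{i\ell}c_{i\ell+1}$ by $k\ell\equiv1\pmod m$, an outer edge; spokes $a_ib_i\mapsto d_{i\ell}c_{i\ell}$, a spoke. A bijection carrying the $3m$ edges into the $3m$ edges is an isomorphism. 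The hypothesis $m>3$ enters only to rule out the degenerate small cases in which a ``star polygon'' is not a genuine simple $m$-cycle.

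For the first half, let $\phi\colon P(m,k)\to P(m,\ell)$ be an isomorphism and let $C$ be the outer $m$-cycle of $P(m,k)$. My plan is to pin down $\phi(C)$ and then read off the conclusion. Both rims of $P(m,\ell)$ (the inner one being a single $m$-cycle since $\gcd(m,\ell)=1$) are \emph{induced} cycles $Z$ of length $m$ for which $P(m,\ell)-V(Z)$ is $2$-regular, equivalently every vertex off $Z$ has exactly one neighbour on $Z$. The central step is the claim that, under the hypotheses $m\ge5$, $2\le k,\ell\le m-2$, $\gcd(m,k)=\gcd(m,\ell)=1$, the outer and inner rims are the \emph{only} cycles of $P(m,\ell)$ with that property, apart from the finitely many generalized Petersen graphs possessing extra symmetry (for $m\ge5$ these are $P(5,2)$, $P(8,3)$, $P(10,2)$, $P(10,3)$, $P(12,5)$, $P(24,5)$), for which one verifies the conclusion by direct inspection. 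Granting this, $\phi(C)$ is either the outer rim or the inner rim of $P(m,\ell)$.

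If $\phi(C)$ is the outer rim, then $\phi$ restricts to a dihedral symmetry of $C_m$, so $\phi(a_i)=c_{\varepsilon i+t}$ for some $\varepsilon=\pm1$ and $t$; since $\phi$ then maps the vertices off $C$ to the vertices off the rim, it maps spokes to spokes, so $\phi(b_i)=d_{\varepsilon i+t}$, and requiring the inner edges $b_ib_{i+k}\mapsto d_{\varepsilon i+t}\,d_{\varepsilon i+t+\varepsilon k}$ to be $\ell$-star edges forces $\varepsilon k\equiv\pm\ell$, that is $k\equiv\pm\ell\pmod m$. If instead $\phi(C)$ is the inner rim, the same bookkeeping---now taking the inner rim in its cyclic order $d_0,d_\ell,d_{2\ell},\dots$, so $\phi(a_i)=d_{(\varepsilon i+t)\ell}$ and, by the same spokes-to-spokes principle, $\phi(b_i)=c_{(\varepsilon i+t)\ell}$---forces $b_ib_{i+k}\mapsto c_{(\varepsilon i+t)\ell}\,c_{(\varepsilon i+t)\ell+\varepsilon k\ell}$ to be outer edges, hence $\varepsilon k\ell\equiv\pm1$, i.e.\ $k\ell\equiv\pm1\pmod m$. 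The only genuinely hard point is the central claim of the first half: one must understand the short cycles of $P(m,\ell)$ well enough to recognize the two rims canonically and to isolate the exceptional graphs, and this is exactly where the hypotheses $m\ge5$, coprimality, and $2\le k,\ell\le m-2$ are used. Everything else is index arithmetic modulo $m$, and the second half is a direct verification.
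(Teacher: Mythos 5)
The paper does not prove this theorem; it is imported verbatim from Steimle and Staton \cite{SS09}, so there is no in-paper argument to compare you against and your sketch must stand on its own. Your part 2 does: the map $a_i\mapsto d_{i\ell}$, $b_i\mapsto c_{i\ell}$ is a bijection because $\gcd(m,\ell)=1$, your three edge checks (outer $\to$ inner, inner $\to$ outer via $k\ell\equiv 1$, spoke $\to$ spoke) are exactly right, and this is precisely the inner/outer-swapping isomorphism the paper itself invokes in Corollary \ref{cor:isomorphism}(2). That half is complete.

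Part 1 has a genuine gap. The entire weight of your argument rests on the claim that, outside a finite exceptional list, the two rims are the \emph{only} induced $m$-cycles $Z$ in $P(m,\ell)$ with $P(m,\ell)-V(Z)$ $2$-regular. You state this, correctly identify it as the only hard point, and do not prove it --- but it \emph{is} the theorem: once $\phi(C)$ is forced to be a rim, the remaining index arithmetic ($\varepsilon k\equiv\pm\ell$ in the rim-preserving case, $\varepsilon k\ell\equiv\pm 1$ in the rim-swapping case, spokes going to spokes because each off-rim vertex has a unique neighbour on the rim) is routine and you do it correctly. Compounding the gap, your exceptional list is the list of arc-transitive generalized Petersen graphs, imported rather than derived: what your strategy actually needs is the list of $P(m,\ell)$ admitting a ``fake rim,'' and there is no a priori reason these coincide --- a graph could have an extra induced $m$-cycle with $2$-regular complement without being arc-transitive, in which case your case analysis would silently miss a possible image of $C$. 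To close the argument you must either prove the uniqueness claim under the stated hypotheses (identifying en route exactly which graphs violate it), or substitute a different canonical invariant, e.g.\ an intrinsic characterization of the spoke edges by counting the short cycles through each edge, and then recover the rims as the components left after deleting the spokes. As written, the necessity direction is an outline of a proof, not a proof.
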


\begin{corollary}\label{cor:isomorphism}
    Let $1 \leq k_1,k_2\leq \floor{\frac{m}{2}}$ with $k_1k_2 \equiv 1 \pmod m$. Then we have the following:
    \begin{enumerate}
        \item[\rm{(1)}] $\mathcal{G}(\{P(m,k_1),V\}) = \mathcal{G}(\{P(m,k_2),V\})$ for any assignment $V$ on $P(m,k_1)$.
        \item[\rm{(2)}] $\mathcal{G}(\mathcal{P}_{0,1}(m,k_1)) = \mathcal{G}(\mathcal{P}_{1,0}(m,k_2))$.
    \end{enumerate}
    
\end{corollary}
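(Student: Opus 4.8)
The plan is to exhibit an explicit graph isomorphism $\phi\colon P(m,k_1)\to P(m,k_2)$ that interchanges the inner and outer vertex classes, and then transport Toggle positions along $\phi$. Since $k_1k_2\equiv 1\pmod m$ forces $k_1$ and $k_2$ to be units modulo $m$, the inner star polygon $\{m,k_i\}$ is a single $m$-cycle and the map $i\mapsto k_2 i$ permutes $\Z/m\Z$; so, reading all subscripts modulo $m$, one may define
\[
\phi(v_{0,i}) = v_{1,\,k_2 i}, \qquad \phi(v_{1,i}) = v_{0,\,k_2 i}.
\]
The first step is to verify that $\phi$ is an isomorphism from $P(m,k_1)$ to $P(m,k_2)$: the spokes $v_{1,i}v_{0,i}$ of $P(m,k_1)$ map to the spokes $v_{0,k_2 i}v_{1,k_2 i}$ of $P(m,k_2)$; the outer edges $v_{1,i}v_{1,i+1}$ map to the inner edges $v_{0,k_2 i}v_{0,k_2 i+k_2}$; and the inner edges $v_{0,i}v_{0,i+k_1}$ map to $v_{1,k_2 i}v_{1,k_2 i+k_1k_2}=v_{1,k_2 i}v_{1,k_2 i+1}$, consecutive outer vertices, using $k_1k_2\equiv 1\pmod m$. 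As $\phi$ is a bijection on the $2m$ vertices and both graphs have exactly $3m$ edges, this makes $\phi$ a graph isomorphism, and by construction it sends the inner cycle of $P(m,k_1)$ onto the outer cycle of $P(m,k_2)$ and vice versa. (This is exactly the isomorphism behind Theorem \ref{thm:Steimle}(2), with the extra observation that it swaps the two cycles.)

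The second step is to note that a graph isomorphism induces an isomorphism of the associated Toggle games: whether a vertex is playable, and the position produced by playing it, depend only on the underlying graph and the $0/1$ weights, both of which are preserved under relabelling by $\phi$. Hence, for any assignment $V$ on $P(m,k_1)$, writing $V^{\phi}$ for the assignment on $P(m,k_2)$ determined by $V^{\phi}(\phi(u))=V(u)$, the game trees of $\{P(m,k_1),V\}$ and $\{P(m,k_2),V^{\phi}\}$ are isomorphic, so their Nimbers agree; under the identification of $P(m,k_1)$ with $P(m,k_2)$ via $\phi$ this is statement (1). For statement (2) we push the specific assignments through $\phi$: in $\mathcal{P}_{0,1}(m,k_1)$ every inner vertex has weight $0$ and every outer vertex has weight $1$, and since $\phi$ maps inner vertices of $P(m,k_1)$ bijectively onto outer vertices of $P(m,k_2)$ and outer onto inner, the transported position $(\mathcal{P}_{0,1}(m,k_1))^{\phi}$ is the position on $P(m,k_2)$ in which every outer vertex has weight $0$ and every inner vertex has weight $1$, i.e.\ $\mathcal{P}_{1,0}(m,k_2)$. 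Therefore $\mathcal{G}(\mathcal{P}_{0,1}(m,k_1))=\mathcal{G}(\mathcal{P}_{1,0}(m,k_2))$.

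The adjacency check for $\phi$ is the only computation, and it is short; the one genuine subtlety is purely one of interpretation in part (1), namely that "$\{P(m,k_2),V\}$" must mean $V$ transported along the inner/outer-swapping isomorphism $\phi$, since $P(m,k_1)$ and $P(m,k_2)$ are different labelled graphs. With that convention fixed, both parts follow at once from the existence of $\phi$. The degenerate case is harmless: if $k_1=1$ then $k_2\equiv 1\pmod m$ together with $1\le k_2\le \lfloor m/2\rfloor$ forces $k_2=1$, so $P(m,k_1)=P(m,k_2)$ and there is nothing to prove, while if $k_1,k_2\ge 2$ the construction above applies verbatim.
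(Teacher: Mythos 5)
Your proof is correct and follows essentially the same route as the paper: both rest on the Steimle--Staton isomorphism of $P(m,k_1)$ with $P(m,k_2)$ that interchanges the inner and outer vertex classes, and then transport the Toggle position along it. The only difference is that you construct and verify the map $\phi(v_{0,i})=v_{1,k_2 i}$, $\phi(v_{1,i})=v_{0,k_2 i}$ explicitly (correctly), whereas the paper simply cites \cite[Theorem 1]{SS09}.
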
\begin{proof}
(1) follows since $P(m,k_1)$ and $P(m,k_2)$ are isomorphic graphs. (2) is a consequence of the isomorphism $\varphi$ defined by Steimle and Staton that maps the set of inner vertices of $P(m,k_1)$ to the set of outer vertices of $P(m,k_2)$, see \cite[Theorem 1]{SS09}.
\end{proof}

\begin{remark}
    Observe that if $\gcd(m,k)=1$ and there does not exist $\ell \le \lfloor\frac{m-1}{2}\rfloor$ with $\ell \ne k$ such that $k\ell \mod{1} m$, then $\mathcal{G}(\mathcal{P}_{1,0}(m,k)) = \mathcal{G}(\mathcal{P}_{0,1}(m,k))$. Indeed, if $k^2 \not\mod{ 1} m$, then $k\ell \mod{ 1} m$ with $\ell \ne k$. The result follows from Corollary \ref{cor:isomorphism}(2) with $k_1=k_2$.
\end{remark}

\begin{theorem}
    Let $k \in \mathbb{N}$ be even. Then $\mathcal{G}(\mathcal{P}_{1,0}(3k,k))=0$.
\end{theorem}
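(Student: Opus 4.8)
The plan is to give the second player an explicit mirroring (pairing) strategy, which shows that $\mathcal{P}_{1,0}(3k,k)$ is a second-player win and hence that $\mathcal{G}(\mathcal{P}_{1,0}(3k,k))=0$. Since $k$ is even, $3k/2$ is an integer with $0<3k/2<3k$; let $\phi$ be the automorphism of $P(3k,k)$ induced by the index rotation $i\mapsto i+3k/2\pmod{3k}$, so that $\phi(v_{0,i})=v_{0,\,i+3k/2}$ and $\phi(v_{1,i})=v_{1,\,i+3k/2}$. Every index rotation is a graph automorphism of a generalized Petersen graph (whether or not $\{3k,k\}$ is connected), and this particular $\phi$ is moreover a fixed-point-free involution: $2\cdot\frac{3k}{2}\equiv 0\pmod{3k}$ while $\frac{3k}{2}\not\equiv 0\pmod{3k}$. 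As $\phi$ preserves the inner/outer partition and the assignment of $\mathcal{P}_{1,0}(3k,k)$ is constant on the inner vertices (all weight $1$) and constant on the outer vertices (all weight $0$), the starting position is $\phi$-invariant.

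The first thing I would establish is a distance lemma: $N[v]\cap N[\phi(v)]=\emptyset$ for every vertex $v$ — equivalently, $d_{P(3k,k)}\bigl(v,\phi(v)\bigr)\ge 3$. Here $\gcd(3k,k)=k$, so the inner polygon $\{3k,k\}$ is a disjoint union of $k$ triangles $\{v_{0,i},v_{0,i+k},v_{0,i+2k}\}$, and since $P(3k,k)$ is $3$-regular the closed neighborhoods are the four-element sets $N[v_{0,i}]=\{v_{0,i},v_{0,i-k},v_{0,i+k},v_{1,i}\}$ and $N[v_{1,i}]=\{v_{1,i},v_{1,i-1},v_{1,i+1},v_{0,i}\}$. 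Writing out $N[v_{0,i}]$ against $N[v_{0,i+3k/2}]$ and $N[v_{1,i}]$ against $N[v_{1,i+3k/2}]$, disjointness reduces to the index conditions $3k/2\not\equiv 0,\,k,\,2k\pmod{3k}$ and $3k/2\not\equiv 0,\,\pm 1,\,\pm 2\pmod{3k}$, all of which hold for every even $k\ge 2$. I expect this to be the most delicate step, as one must rule out a short detour through the small inner triangles that would bring $v$ and $\phi(v)$ within distance two; once the neighborhoods are written out, though, the check is routine.

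Granting the lemma, the second player follows the pairing strategy: she maintains the invariant that the position after each of her moves is $\phi$-invariant, which the starting position satisfies. Suppose from a $\phi$-invariant position $S$ the first player makes a legal move at a vertex $v$; the second player replies at $\phi(v)$. I would check legality of this reply as follows. Because $N[v]\cap N[\phi(v)]=\emptyset$, the first player's move left every weight on $N[\phi(v)]$ equal to its value in $S$; since $\phi$ is a graph automorphism and $S$ is $\phi$-invariant, $\phi$ carries $N[\phi(v)]$ bijectively onto $N[v]$ and preserves these weights. Hence $\omega(\phi(v))=1$ and $\sigma(\phi(v))>2$ in the post-move position exactly as $\omega(v)=1$ and $\sigma(v)>2$ held in $S$, so playing $\phi(v)$ is legal. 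Finally, toggling $N[v]$ and then the disjoint set $N[\phi(v)]$ — which is $\phi(N[v])$ — sends a $\phi$-invariant position to a $\phi$-invariant position, so the invariant is restored.

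To conclude: a game of Toggle on $P(3k,k)$ terminates in at most $|V(P(3k,k))|=6k$ moves, and by the invariant the second player always has a reply on her turn; hence the first player is the one who eventually faces no legal move and loses under normal play. Therefore $\mathcal{P}_{1,0}(3k,k)$ is a second-player win, i.e.\ $\mathcal{G}(\mathcal{P}_{1,0}(3k,k))=0$, as claimed.
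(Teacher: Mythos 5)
Your proof is correct, and it takes a genuinely different route from the paper's. The paper combines a parity count (a move inside an inner triangle $T_i$ renders all of $T_i$ unplayable, and there are $k$ triangles with $k$ even) with an explicit \emph{blocking} strategy: Player~2 answers a move at $v_{0,j}$ with one at $v_{0,j+1+k}$ so that no three consecutive outer vertices ever simultaneously carry weight $1$ --- the only configuration that could make an outer vertex playable --- and the persistence of this blocking through later rounds is asserted by example rather than fully verified. You replace all of this with a single Tweedledum--Tweedledee pairing via the half-turn $\phi\colon i\mapsto i+3k/2$, and the whole burden falls on the disjointness lemma $N[v]\cap N[\phi(v)]=\emptyset$, which you check correctly: for inner vertices the two closed neighborhoods sit over distinct triangles because $k/2\not\equiv 0\pmod{k}$, and for outer vertices one needs $3\le 3k/2\le 3k-3$, which holds for every even $k\ge 2$. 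Your observation that legality of a Toggle move at $u$ in a $3$-regular graph depends only on the weights on $N[u]$ (namely $\omega(u)=1$ and $\sigma(u)\ge 3$) is exactly what makes the mirrored reply legal after an arbitrary first-player move, and toggling the $\phi$-invariant set $N[v]\cup\phi(N[v])$ restores $\phi$-invariance, so the induction closes. What each approach buys: yours is shorter, fully rigorous, and avoids any analysis of which vertices can become playable; the paper's, while sketchier, explains \emph{why} the outer cycle never activates and exposes the triangle-parity mechanism. Both hinge on $k$ being even, but for different reasons (integrality and fixed-point-freeness of the half-turn versus parity of the number of triangle moves), and both correctly fail for odd $k$, consistent with Table~\ref{tab:NimbersP3kk}.
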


\begin{proof}
    First note that the graph $P(3k,k)$ consists of an outer $3k$-polygon and $k$ inner triangles, so let us label the vertex sets of each of these triangles as $T_1,T_2,\dots,T_k$, where $T_i = \{v_{0,i},v_{0,i+k},v_{0,i+2k}\}$ for each $i \in \{1,2,\dots, k\}$. We define $O_i = \{v_{1,i},v_{1,i+k},v_{1,i+2k}\}$. Note that a Toggle move on the inner vertex $v_{0,i+\varepsilon k}$ of $T_i$ ($\varepsilon \in \{0,1,2\}$) will switch all vertices in $T_i$ to weight $0$ and $v_{1,i+\varepsilon k}$ to weight $1$. All other vertices remain unchanged.
    
    Suppose a game of Toggle is played with initial position $\mathcal{P}_{1,0}(3k,k)$. Then the second player has a winning strategy provided they can ensure that no outer vertex becomes playable over the entire game. Since a move at a vertex of $T_i$ renders all vertices of $T_i$ unplayable, and since there are $k$ such moves, where $k$ is even, this results in a second player winning game. Conversely, the only way the first player can possibly have a winning strategy is if they can ensure at some stage of the game that at least one outer vertex becomes playable.
    
    We say an outer vertex $v_{1,j}$ is \textit{blocked} at stage $\ell$ if $\omega^{(\ell)}(v_{1,j})=\omega^{(\ell)}(v_{0,j})=0$. Note that if $v_{1,j}$ is blocked then, provided all other outer vertices are unplayable, one can never have $\omega^{(\ell+1)}(v_{1,j})=1$. Player $1$ endeavors to reach a stage where three consecutive outer vertices have weight $1$. However, Player $2$ can always prevent this by blocking the appropriate outer vertices. For example, assume without loss of generality that Player $1$ moves at $v_{0,1}$. Then Player $2$ counters by making a move at $v_{0,2+k}$. This causes vertex $v_{1,2}$ to become blocked. As a result $\omega^{(2)}(v_{1,1})=\omega^{(2)}(v_{1,2+k})=1$, rendering four blocked vertices, viz.\ $v_{1,1+k}$, $v_{1,1+2k}$, $v_{1,2}$, $v_{1,2+2k}$. Repeating this strategy, Player $2$ can always guarantee that no three consecutive outer vertices have weight $1$ irrespective of Player $1$'s moves. This proves Player $2$ has a winning strategy, in which case $\mathcal{G}(\mathcal{P}_{1,0}(3k,k))=0$.
\end{proof}
    
\begin{theorem}\label{thm:gp_nim_bounds}
    For $0 < k \le \lfloor\frac{m-1}{2}\rfloor$
    \begin{enumerate}
        \item $\mathcal{G}(\mathcal{P}_{1,1}(m,k)) \in \{0,1,2\}$,
        \item $\mathcal{G}(\mathcal{P}_{0,1}(m,k)),\; \mathcal{G}(\mathcal{P}_{1,0} (m,k)) \in \{0,1\}$.
    \end{enumerate}
\end{theorem}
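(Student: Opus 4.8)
The plan is to reduce both parts to the elementary fact that $\mex(T)\le t$ whenever $T$ is a finite set with $|T|\le t$, exploiting the rotational symmetry of $P(m,k)$. For any Toggle position $\mathcal{S}$ we have $\mathcal{G}(\mathcal{S})=\mex\{\Nim(\mathcal{S}_1),\dots,\Nim(\mathcal{S}_n)\}$, where $\mathcal{S}_1,\dots,\mathcal{S}_n$ are the positions reachable from $\mathcal{S}$ by a single legal move. Isomorphic Toggle positions have equal Nimber, so if $\mathcal{S}_1,\dots,\mathcal{S}_n$ fall into at most $t$ isomorphism classes, the set $\{\Nim(\mathcal{S}_1),\dots,\Nim(\mathcal{S}_n)\}$ has size at most $t$, and hence $\mathcal{G}(\mathcal{S})\le t$. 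Thus it suffices to show that a single move on $\mathcal{P}_{0,1}(m,k)$ or on $\mathcal{P}_{1,0}(m,k)$ always yields pairwise isomorphic positions (so $t=1$), whereas a single move on $\mathcal{P}_{1,1}(m,k)$ yields positions in at most two isomorphism classes (so $t=2$).

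To obtain the symmetry I would use the automorphism $\rho\colon v_{a,i}\mapsto v_{a,i+1}$ of $P(m,k)$ (indices mod $m$): it permutes the outer cycle $\{m,1\}$ among itself, the inner star polygon $\{m,k\}$ among itself (also when $\gcd(m,k)>1$), and the spokes among themselves, and $\langle\rho\rangle$ has exactly the two vertex orbits $V_{\{m,k\}}$ and $V_{\{m,1\}}$. Each of the three assignments $\mathcal{P}_{0,1}$, $\mathcal{P}_{1,0}$, $\mathcal{P}_{1,1}$ is constant on each of these orbits, hence $\rho$-invariant; therefore playability is a $\rho$-invariant property of a vertex, and playing at $v$ versus at $\rho^{j}(v)$ yields positions related by the isomorphism $\rho^{j}$. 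Consequently the number of isomorphism classes of one-move options of $\mathcal{S}$ is bounded by the number of $\langle\rho\rangle$-orbits meeting the set of playable vertices of $\mathcal{S}$.

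It then remains to locate the playable vertices in each case, which is a short closed-neighborhood computation using $3$-regularity. In $\mathcal{P}_{1,1}(m,k)$ every vertex has weight $1$ and $\sigma^{(0)}=4$, which drops to $0$ after a move, so every vertex is playable and both orbits occur; hence $t=2$ and $\mathcal{G}(\mathcal{P}_{1,1}(m,k))\in\{0,1,2\}$. In $\mathcal{P}_{0,1}(m,k)$ exactly the outer vertices have weight $1$, and for $v_{1,j}$ one gets $\sigma^{(0)}(v_{1,j})=3$ (its two outer neighbors have weight $1$ and its spoke neighbor $v_{0,j}$ has weight $0$), dropping to $1$; so the playable vertices are precisely the outer vertices, a single orbit, $t=1$, and $\mathcal{G}(\mathcal{P}_{0,1}(m,k))\in\{0,1\}$. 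The case $\mathcal{P}_{1,0}(m,k)$ is dual: the weight-$1$ vertices are the inner ones, and here one uses $2k<m$ (which holds since $k\le\lfloor\frac{m-1}{2}\rfloor$) to see that $v_{0,j}$ has two distinct inner neighbors $v_{0,j-k},v_{0,j+k}$, again giving $\sigma^{(0)}(v_{0,j})=3$ and $t=1$, so $\mathcal{G}(\mathcal{P}_{1,0}(m,k))\in\{0,1\}$.

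There is no genuine obstacle here; the only points needing care are the $\sigma^{(0)}$-computations establishing playability (routine, via $3$-regularity and $2k<m$) and the observation that $\langle\rho\rangle$ has exactly two vertex orbits. It is worth stressing what the argument does \emph{not} give: the bounds need not propagate to deeper positions — one checks, for example, that a position reachable in one move from $\mathcal{P}_{0,1}(9,4)$ already has Nimber $2$ — so one should resist trying to prove these inequalities for all reachable positions; they hold at the root purely by symmetry.
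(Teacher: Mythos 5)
Your proof is correct and takes essentially the same route as the paper's: the paper simply observes that up to symmetry there are only one (resp.\ two) possible initial moves, so the Nimber, being a $\mex$ over a set of at most one (resp.\ two) values, is at most $1$ (resp.\ $2$). You merely make this precise by exhibiting the rotation $\rho$, its two vertex orbits, and the $\sigma^{(0)}$-computations certifying playability, all of which check out.
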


\begin{proof}

    In the case of $\mathcal{P}_{1,1}(m,k)$, there are only two possible initial moves up to symmetry. By the manner in which Nimbers are defined, the Nimber cannot be greater than $2$.

    In the cases of $\mathcal{P}_{0,1}(m,k)$ and $\mathcal{P}_{1,0}(m,k)$, there is only one possible initial move up to symmetry, so the Nimber cannot be greater than $1$.
\end{proof}

To more efficiently determine Nimbers $\mathcal{G}(\mathcal{P}_{0,1}(m,k))$ and $\mathcal{G}(\mathcal{P}_{1,0}(m,k))$, $1 \leq k \leq 2$, we introduce \emph{Jacob's Ladder} $JL_m$, a two-player impartial game played on an $m$-cycle $C_m$. A move here consists of choosing a vertex $v$ on $C_m$ and removing all vertices in $N_2[v]$. The player with no available legal moves loses the game.

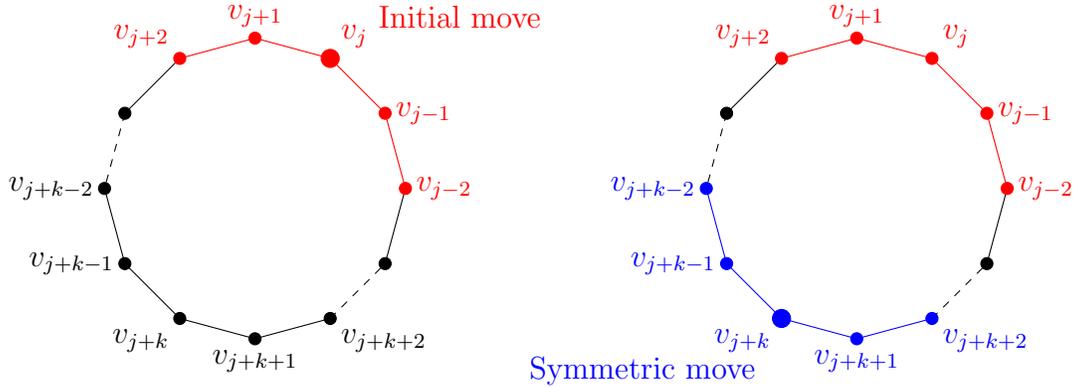
\begin{figure}[H]
    \centering
    \begin{tikzpicture}
    \coordinate(a1) at (-2,0);
    \coordinate(a2) at (-2.27,1);
    \coordinate(a3) at (-3,1.73);
    \coordinate(a4) at (-4,2);
    \coordinate(a5) at (-5,1.73);
    \coordinate(a6) at (-5.73,1);
    \coordinate(a7) at (-6,0);
    \coordinate(a8) at (-5.73,-1);
    \coordinate(a9) at (-5,-1.73);
    \coordinate(a10) at (-4,-2);
    \coordinate(a11) at (-3,-1.73);
    \coordinate(a12) at (-2.27,-1);

    \draw[red](a1)--(a2)--(a3)--(a4)--(a5);
    \draw(a5)--(a6);
    \draw[dashed](a6)--(a7);
    \draw(a7)--(a8)--(a9)--(a10)--(a11);
    \draw[dashed](a11)--(a12);
    \draw(a12)--(a1);

    \foreach\i in{a3}{\filldraw[red](\i)circle(0.12);}
    \foreach\i in{a1,a2,a4,a5}{\filldraw[red](\i)circle(0.08);}
    \foreach\i in{a6,a7,a8,a9,a10,a11,a12}{\filldraw(\i)circle(0.08);}

    \coordinate(b1) at (6,0);
    \coordinate(b2) at (5.73,1);
    \coordinate(b3) at (5,1.73);
    \coordinate(b4) at (4,2);
    \coordinate(b5) at (3,1.73);
    \coordinate(b6) at (2.27,1);
    \coordinate(b7) at (2,0);
    \coordinate(b8) at (2.27,-1);
    \coordinate(b9) at (3,-1.73);
    \coordinate(b10) at (4,-2);
    \coordinate(b11) at (5,-1.73);
    \coordinate(b12) at (5.73,-1);

    \draw[red](b1)--(b2)--(b3)--(b4)--(b5);
    \draw(b5)--(b6);
    \draw[dashed](b6)--(b7);
    \draw[blue](b7)--(b8)--(b9)--(b10)--(b11);
    \draw[dashed](b11)--(b12);
    \draw(b12)--(b1);

    \foreach\i in{b1,b2,b3,b4,b5}{\filldraw[red](\i)circle(0.08);}
    \foreach\i in{b9}{\filldraw[blue](\i)circle(0.12);}
    \foreach\i in{b7,b8,b10,b11}{\filldraw[blue](\i)circle(0.08);}
    \foreach\i in{b6,b12}{\filldraw(\i)circle(0.08);}
    
\node[right,red]at(a1){$v_{j-2}$};
\node[right,red]at(a2){$v_{j-1}$};
\node[above right,red]at(a3){$v_{j}$};
\node[above,red]at(a4){$v_{j+1}$};
\node[above left,red]at(a5){$v_{j+2}$};
\node[left]at(a7){$v_{j+k-2}$};
\node[left]at(a8){$v_{j+k-1}$};
\node[below left]at(a9){$v_{j+k}$};
\node[below]at(a10){$v_{j+k+1}$};
\node[below right]at(a11){$v_{j+k+2}$};
\node[above right,red] at(-2.5,2){Initial move};

\node[right,red]at(b1){$v_{j-2}$};
\node[right,red]at(b2){$v_{j-1}$};
\node[above right,red]at(b3){$v_{j}$};
\node[above,red]at(b4){$v_{j+1}$};
\node[above left,red]at(b5){$v_{j+2}$};
\node[left, blue]at(b7){$v_{j+k-2}$};
\node[left, blue]at(b8){$v_{j+k-1}$};
\node[below left, blue]at(b9){$v_{j+k}$};
\node[below, blue]at(b10){$v_{j+k+1}$};
\node[below right, blue]at(b11){$v_{j+k+2}$};
\node[below left,blue] at(2.8,-2.1){Symmetric move};

\end{tikzpicture}
\caption{A symmetric move is played at $v_{j+k}$ in response to an initial move at $v_j$.}
\label{fig:symmetric}
\end{figure}


Denote by $JL_k + JL_k$ the game of Jacob's Ladder played on $C_k + C_k$.

\begin{lemma}\label{lem:JL_2k}
    For $k \ge 3$, $\mathcal{G}(JL_{2k}) = \mathcal{G}(JL_k + JL_k) = 0$.
\end{lemma}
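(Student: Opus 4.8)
The plan is to exploit a pairing (symmetry) strategy for the second player, which simultaneously handles both $JL_{2k}$ and $JL_k + JL_k$. For the game $JL_{2k}$ played on $C_{2k} = v_0 v_1 \cdots v_{2k-1}$, I would fix the ``antipodal'' involution $v_j \mapsto v_{j+k}$ (indices mod $2k$); this is a graph automorphism of $C_{2k}$ that commutes with the move operation (deleting $N_2[v]$), because $N_2[v_{j+k}]$ is exactly the image of $N_2[v_j]$ under the involution. The second player's strategy is: whenever Player $1$ moves at a vertex $v_j$, Player $2$ responds at $v_{j+k}$. The key verification is that this response is always legal and well-defined: after Player $1$'s move at $v_j$, the vertex $v_{j+k}$ must still be present (i.e. not already deleted), and its current closed $2$-neighborhood among the surviving vertices is the antipodal image of the set just deleted. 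For $k \ge 3$ the sets $N_2[v_j]$ and $N_2[v_{j+k}]$ are disjoint (they have $5$ vertices each and $C_{2k}$ has $2k \ge 6$ vertices, and one checks the indices do not overlap when $k\ge 3$ — this is exactly Figure~\ref{fig:symmetric}), so Player $1$'s move cannot remove $v_{j+k}$, and Player $2$'s move is available. After Player $2$'s response the surviving configuration is again invariant under the antipodal involution, so the invariant is maintained inductively; since Player $2$ always has a move whenever Player $1$ just moved, Player $2$ makes the last move, giving $\mathcal{G}(JL_{2k}) = 0$.

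For $JL_k + JL_k$, the same idea applies with the involution that swaps the two copies of $C_k$ vertex-by-vertex: Player $2$ mirrors Player $1$'s move in the opposite component. Here legality is immediate because the two components are independent — a move in one copy never affects the other — so the mirrored vertex and its $2$-neighborhood are untouched. The invariant ``the two components are in identical states'' is preserved, and Player $2$ moves last, so $\mathcal{G}(JL_k + JL_k) = 0$. Since a Nimber of $0$ characterizes a previous-player (second-player) win, both games have Grundy value $0$, and in particular they are equal, establishing the chain of equalities in the statement. (One could alternatively argue $\mathcal{G}(JL_k+JL_k) = \mathcal{G}(JL_k)\oplus\mathcal{G}(JL_k) = 0$ directly from the disjoint-union rule for Nimbers, which is even shorter; I would mention this as the quick route and use the pairing argument only for $JL_{2k}$.)

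The step I expect to be the main obstacle is the legality check for the antipodal strategy in $JL_{2k}$, specifically confirming for \emph{all} reachable positions (not just the opening move) that when Player $1$ plays at some surviving $v_j$, the antipode $v_{j+k}$ is also still surviving and its move is legal. This reduces to the disjointness of $N_2[v_j]$ and $N_2[v_{j+k}]$ on the cycle, together with the inductive invariance of the configuration under the involution; the edge case to watch is small $k$, which is why the hypothesis $k \ge 3$ appears (for $k=3$, $C_6$, the two $5$-element neighborhoods just barely fit disjointly — in fact they partition a subset leaving one vertex, and one should double-check $k=3,4$ by hand). Handling a possible isolated leftover vertex or short path segment created by earlier moves requires noting that the involution still acts on whatever remains, and a leftover vertex that is unplayable for Player $1$ has an unplayable antipodal twin, so parity is preserved throughout.
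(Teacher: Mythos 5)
Your proposal takes exactly the paper's approach: the antipodal pairing strategy on $C_{2k}$ and the component-mirroring strategy on $C_k + C_k$ (the paper's proof is essentially these two observations plus Figure \ref{fig:symmetric}), so in substance you have reproduced the intended argument, and your aside that $\mathcal{G}(JL_k+JL_k)=\mathcal{G}(JL_k)\oplus\mathcal{G}(JL_k)=0$ is a legitimate shortcut for the second equality. One correction to your justification, though: the sets $N_2[v_j]$ and $N_2[v_{j+k}]$ are \emph{not} disjoint for $k=3$ or $k=4$ (in $C_6$ two $5$-element sets cannot even fit disjointly; in $C_8$ they share $v_{j+2}$ and $v_{j+6}$), so disjointness is not the right invariant. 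What the hypothesis $k\ge 3$ actually buys is that $v_{j+k}\notin N_2[v_j]$, i.e.\ the antipode survives Player $1$'s move and the response is legal; one then checks (easily, and only the cases $k=3,4$ require any care) that the two moves together delete an antipodally symmetric set, so the symmetry invariant is restored even when the neighborhoods overlap. With that repair your argument is complete and matches the paper's.
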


\begin{proof}
    Since $k \geq 3$, the game $JL_{2k}$ does not end after a single move. Because there are $2k$ vertices, for every move made by the first player there is a \emph{symmetric move} that can be made by the second player (See Figure \ref{fig:symmetric}). This results in a winning strategy for the second player, i.e.\ $\mathcal{G}(JL_{2k}) = 0$.

    In the case of $JL_k + JL_k$, Player $2$ has a winning strategy by simply mirroring each of Player $1$'s moves, that is, playing in the opposite cycle at the vertex that corresponds to Player $1$'s move.
\end{proof}

\begin{lemma}\label{lemma:unplayable_GPn1}
In a game of Toggle on $\mathcal{P}_{0,1}(m,1)$ or $\mathcal{P}_{1,0}(m,1)$, if a vertex $v$ becomes unplayable at stage $j$ then $v$ is terminally unplayable at stage $j$, i.e.\ vertex $v$ is terminally unplayable once it becomes unplayable.
\end{lemma}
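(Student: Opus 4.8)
The plan is to reduce to one initial position and then classify \emph{all} reachable positions exactly. Since $v_{0,i} \leftrightarrow v_{1,i}$ is an automorphism of $P(m,1)$ that sends $\mathcal{P}_{1,0}(m,1)$ to $\mathcal{P}_{0,1}(m,1)$, it suffices to treat $\mathcal{P}_{1,0}(m,1)$, i.e.\ the inner $m$-cycle all lit and the outer $m$-cycle all unlit. I will use repeatedly that on a $3$-regular graph a vertex $v$ is playable precisely when $\omega(v)=1$ and at least two of its three neighbors are lit: flipping the four vertices of $N[v]$ replaces $\sigma(v)$ by $4-\sigma(v)$, so the move is legal iff $\sigma(v)>2$, i.e.\ $\sigma(v)\ge 3$.

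The core is an invariant maintained for every position reachable from $\mathcal{P}_{1,0}(m,1)$, which I would prove by induction on the number $t$ of moves played: at every stage, (i) no move has yet been made at an outer vertex; (ii) no column $i$ has both $v_{0,i}$ and $v_{1,i}$ lit; and (iii) the moves made so far lie at pairwise non-adjacent vertices of the inner cycle (in particular, all distinct). Granting (i)--(iii), any legal move at an inner vertex $v_{0,i}$ is forced to be \emph{clean}: by (ii) the outer vertex $v_{1,i}$ is unlit, so playability of $v_{0,i}$ requires both inner neighbors $v_{0,i-1},v_{0,i+1}$ lit, and the move turns $v_{0,i-1},v_{0,i},v_{0,i+1}$ off and $v_{1,i}$ on. Thus a clean move never turns an inner vertex on; consequently each inner vertex is played at most once, a played column keeps its outer vertex lit from then on, and its inner vertex stays off forever.

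For the inductive step I would check that the $(t+1)$-st move preserves (i)--(iii). It cannot be at an outer vertex $v_{1,i}$: if $v_{1,i}$ is lit then column $i$ was played, hence by (iii) columns $i-1,i+1$ were not, hence $v_{1,i-1},v_{1,i+1}$ are still unlit (outer bits change only via moves in their own column, using (i)) and $v_{0,i}$ is off, giving $\sigma(v_{1,i})=1<3$; so (i) persists. The move is therefore at a lit inner vertex $v_{0,i}$, which by (ii) is a clean move, and $v_{0,i}$ together with its two inner neighbors cannot have been played earlier, since any such earlier play would have switched $v_{0,i}$ off for good; so (iii) persists. Finally, after the clean move columns $i-1,i,i+1$ read $(0,0),(0,1),(0,0)$ — the outer bits of columns $i\pm1$ being $0$ by (ii) applied at stage $t$, their inner bits having been $1$ — and every other column is unchanged, so (ii) persists.

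Once the invariant is in hand, every position reachable from $\mathcal{P}_{1,0}(m,1)$ is obtained from the inner cycle $C_m$ by deleting a family of pairwise disjoint triples of consecutive vertices $\{i-1,i,i+1\}$ (switching those inner vertices off and the corresponding outer vertices permanently on), and a vertex is playable in such a position exactly when it is an inner vertex $v_{0,i}$ with $i-1,i,i+1$ all still present. Deleting further triples can only falsify this condition for a vertex, never restore it, and outer vertices are never playable; hence the set of playable vertices is non-increasing along any play, which is exactly the claim that once a vertex becomes unplayable it is terminally unplayable. I expect the one point demanding care is the inductive step — in particular, pinning down that two prospective played inner vertices cannot be adjacent, which rests entirely on the fact that clean moves only ever switch inner vertices off; the final passage to the triple-deletion game on $C_m$ is then routine.
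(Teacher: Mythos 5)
Your proof is correct, and it takes a genuinely different route from the paper's. The paper works with $\mathcal{P}_{0,1}(m,1)$, leans on Proposition \ref{prop:degree3penultimately} to control the lit outer cycle, and then does a local case analysis of which specific vertices ($v_{1,i\pm1}$, $v_{1,i\pm2}$, the inner vertices) could ever regain playability, closing with the somewhat informal remark that ``all future moves result in the same pattern.'' You instead work with $\mathcal{P}_{1,0}(m,1)$ (legitimately, via the inner/outer swap automorphism of the prism), and prove a global invariant on \emph{all} reachable positions: no move is ever made on the initially-unlit cycle, no column is doubly lit, and played vertices are pairwise non-adjacent on the lit cycle. From this every move is forced to be ``clean'' --- it deletes a triple of consecutive lit vertices and permanently lights one spoke partner --- so the set of lit vertices on the playable cycle is non-increasing and playability is monotone, which is exactly the lemma. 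Your argument is self-contained (it does not invoke Proposition \ref{prop:degree3penultimately}), makes the paper's closing hand-wave precise, and as a bonus essentially re-derives Lemma \ref{lem:JL_GPm1}, since the triple-deletion game on $C_m$ you end up with is precisely $JL_m$. One cosmetic remark: your invariant (iii) only records pairwise non-adjacency of the played vertices, while your closing paragraph asserts the deleted triples are pairwise \emph{disjoint} (i.e.\ played vertices at distance at least $3$); the stronger statement is in fact true (a clean move at $v_{0,i}$ needs $v_{0,i\pm1}$ lit, which rules out earlier plays at distance $2$), but the conclusion you draw only needs monotonicity of the lit set, so nothing is at stake.
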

\begin{proof}
We prove this for the case $\mathcal{P}_{0,1}(m,1)$, since the case for $\mathcal{P}_{1,0}(m,1)$ is entirely symmetric. Proposition \ref{prop:degree3penultimately} implies that in a game of Toggle on $C_m$ with initial assignment $V_0^{(0)}=\emptyset$, any vertex $v_i$ is terminally unplayable once it becomes unplayable. 

    The outer cycle of $\mathcal{P}_{0,1}(m,1)$ may be thought of as a Toggle game on $C_m$ with $V_0^{(0)}=\emptyset$. So for each outer vertex $v_{1,i}$, being unplayable is equivalent to being terminally unplayable unless some inner vertex $v_{0,j}$ affects the playability of $v_{1,i}$.

    Suppose the initial move is made at $v_{1,i}$. Then $\omega^{(1)}(v_{1,i-1})=\omega^{(1)}(v_{1,i})=\omega^{(1)}(v_{1,i+1})=0$ and $\omega^{(1)}(v_{0,i})=1$. Since $v_{1,i-1}$ is terminally unplayable in $C_m$, it can only become playable if $v_{0,i-1}$ has weight $1$ at some stage. But this can occur only if a Toggle move is made on $v_{1,i-1}$ or on some inner vertex. Since Toggle moves must be made on at least two adjacent outer vertices for an inner vertex to become playable, and since $v_{1,i-1}$ is unplayable, it follows that $v_{1,i-1}$ is terminally unplayable. (Observe that the argument for $v_{1,i+1}$ is symmetric.) This further implies that all inner vertices are terminally unplayable at the initial stage, since no two adjacent outer vertices can both be played within the same game. Because all of the neighbors of $v_{1,i}$ are terminally unplayable and $\omega^{(1)}(v_{1,i})=0$, $v_{1,i}$ is also terminally unplayable.

    Finally, observe that the vertex $v_{1,i-2}$ is unplayable after the initial move, and $v_{1,i-2}$ can only become playable if either $\omega^{(\ell)}(v_{0,i-2})=1$ or $\omega^{(\ell)}(v_{1,i-1})=1$ at some stage $\ell$. But since all inner vertices are terminally unplayable and $v_{1,i}$ is terminally unplayable, these can only occur if a Toggle move is played at $v_{1,i-2}$. Therefore $v_{1,i-2}$ is terminally unplayable. (The argument for $v_{1,i+2}$ is symmetric.) Thus any vertex $v$ which becomes unplayable at the initial move also becomes terminally unplayable. Because of this and the structure of $\mathcal{P}_{0,1}(m,1)$, all future moves result in the same pattern as that obtained by the initial move. Therefore any vertex $v$ in $\mathcal{P}_{0,1}(m,1)$ is terminally unplayable the moment it becomes unplayable.
\end{proof}

We next show the relationship between the Nimbers of Jacob's Ladder and those of Toggle.

\begin{lemma}\label{lem:JL_GPm1}
    $\mathcal{G}(JL_m) = \mathcal{G}(\mathcal{P}_{0,1}(m,1)) = \mathcal{G}(\mathcal{P}_{1,0}(m,1))$.
\end{lemma}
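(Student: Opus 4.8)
The plan is to establish a bijection between game positions of $JL_m$ and reachable positions of Toggle on $\mathcal{P}_{0,1}(m,1)$ (and, by the symmetry already noted, on $\mathcal{P}_{1,0}(m,1)$), so that legal moves correspond exactly and the Sprague--Grundy values agree position-by-position. The crucial enabling fact is Lemma \ref{lemma:unplayable_GPn1}: once a vertex of $\mathcal{P}_{0,1}(m,1)$ becomes unplayable it stays unplayable, and moreover (from the proof of that lemma) a Toggle move at an outer vertex $v_{1,i}$ makes $v_{1,i-1},v_{1,i},v_{1,i+1}$ all terminally unplayable while leaving every other outer vertex's playability status exactly as it was on the pure $C_m$ Toggle game. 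Since no inner vertex can ever be played (two adjacent outer vertices would have to be played first, which the lemma rules out), the entire game lives on the outer cycle, and the only moves available are at outer vertices whose two cycle-neighbours are still ``alive.''

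First I would make precise the correspondence: a position of Toggle on $\mathcal{P}_{0,1}(m,1)$ after some sequence of moves is determined, up to the data relevant for future play, by the set $A \subseteq \Z/m\Z$ of indices $i$ such that $v_{1,i}$ is still playable-in-principle, i.e.\ not yet terminally unplayable; and the playable vertices are exactly those $i \in A$ with $i-1, i+1 \notin \{\text{indices killed so far}\}$ — equivalently, exactly the $v_{1,i}$ whose closed neighbourhood on the outer cycle has not been touched. I would check that playing at $v_{1,i}$ deletes precisely $i-1,i,i+1$ from the ``alive'' set and changes nothing else, which is exactly the effect in $JL_m$ of choosing $v$ and removing $N_2[v]$ — wait, in $JL_m$ a move removes $N_2[v]$, five vertices, not three. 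So the correct correspondence is subtler: I would instead match a Toggle move at $v_{1,i}$ with a $JL$ move at the vertex $v_j$ for which $N_2[v_j]$ accounts for the three newly-killed outer vertices together with the fact that $v_{1,i\pm2}$, while not killed, can no longer be the site of a move that affects anyone new. The right statement is that, after the first move, $v_{1,i-1}$ and $v_{1,i+1}$ are killed and $v_{1,i-2}, v_{1,i+2}$ — though still nominally alive by the $C_m$ analysis — become terminally unplayable by the last paragraph of Lemma \ref{lemma:unplayable_GPn1}. Hence a Toggle move at $v_{1,i}$ eliminates future play at all five indices $i-2,\dots,i+2$, matching removal of $N_2[v_i]$ in $JL_m$ exactly.

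With that identified, the proof writes itself by induction on the number of moves (equivalently on $\sigma^{(j)}$ or on the size of the alive set): the game tree of Toggle on $\mathcal{P}_{0,1}(m,1)$ is isomorphic to the game tree of $JL_m$, with corresponding positions having identical option sets under the correspondence, so by the recursive $\mex$-definition of Nimbers they have equal Grundy values; in particular the root positions do, giving $\mathcal{G}(JL_m) = \mathcal{G}(\mathcal{P}_{0,1}(m,1))$. The equality $\mathcal{G}(\mathcal{P}_{0,1}(m,1)) = \mathcal{G}(\mathcal{P}_{1,0}(m,1))$ is immediate from the symmetry of the two assignments under swapping inner and outer cycles of $P(m,1)$ (the roles of $v_{0,i}$ and $v_{1,i}$ are interchangeable since $P(m,1)$ is vertex-transitive on these two orbits), a point already invoked at the start of the proof of Lemma \ref{lemma:unplayable_GPn1}.

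The main obstacle I anticipate is getting the ``radius'' of the deletion exactly right: it is tempting to say a Toggle move kills only the three vertices $v_{1,i-1}, v_{1,i}, v_{1,i+1}$, which would wrongly match a radius-$1$ deletion game, whereas the genuine $JL_m$ rule removes $N_2[v]$. The resolution is precisely the last paragraph of Lemma \ref{lemma:unplayable_GPn1}, which upgrades $v_{1,i-2}$ and $v_{1,i+2}$ from ``alive but currently unplayable'' to ``terminally unplayable,'' because the only events that could revive them (a play at an inner vertex, or a play at $v_{1,i-1}$ / $v_{1,i+1}$) are all blocked. I would therefore state and use this carefully as the key lemma in the induction step, treating the initial move and a generic later move uniformly since, as the cited lemma notes, ``all future moves result in the same pattern as that obtained by the initial move.''
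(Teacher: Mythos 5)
Your proposal is correct and follows essentially the same route as the paper: both reduce to Lemma \ref{lemma:unplayable_GPn1} to show that play is confined to the outer cycle, that unplayability is permanent, and that a move at $v_{1,i}$ kills exactly the distance-$2$ closed neighbourhood, so the game tree is isomorphic to that of $JL_m$; the only cosmetic difference is that you justify $\mathcal{G}(\mathcal{P}_{0,1}(m,1))=\mathcal{G}(\mathcal{P}_{1,0}(m,1))$ by the inner--outer swap automorphism of the prism directly, whereas the paper cites Corollary \ref{cor:isomorphism}(2), which amounts to the same thing for $k=1$. Your explicit worry about the deletion radius (three versus five vertices) is resolved exactly as the paper does, via the final paragraph of Lemma \ref{lemma:unplayable_GPn1}.
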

\begin{proof}
By Corollary \ref{cor:isomorphism}(2), $\mathcal{G}(\mathcal{P}_{0,1}(m,1)) = \mathcal{G}(\mathcal{P}_{1,0}(m,1))$, so it suffices to show $\mathcal{G}(JL_m) = \mathcal{G}(\mathcal{P}_{0,1}(m,1))$. Thus we consider a game of Toggle with initial position $\mathcal{P}_{0,1}(m,1)$. Let $G$ be the subgraph whose vertex set consists of all playable vertices of $P(m,1)$. By Lemma \ref{lemma:unplayable_GPn1}, all inner vertices are terminally unplayable, hence $G$ is initially the cycle $C_m$ comprised of the outer vertices of $P(m,1)$. Moreover, any vertex $v$ is terminally unplayable the moment it becomes unplayable. It follows that any vertex $v$ removed from $G$ remains removed for the balance of the game. Since a move at $v$ renders all vertices within distance $2$ of $v$ unplayable, we conclude that the game is equivalent to $JL_m$.
\end{proof}

\begin{theorem}
    For $k \geq 3$, $\mathcal{G}(\mathcal{P}_{0,1}(2k,1))=0$.
\end{theorem}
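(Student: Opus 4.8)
The plan is to obtain this as an immediate consequence of the two preceding lemmas. First I would invoke Lemma~\ref{lem:JL_GPm1}, which identifies the Nimber of Toggle on $\mathcal{P}_{0,1}(m,1)$ with that of Jacob's Ladder $JL_m$ for every admissible $m$; applying it with $m = 2k$ gives $\mathcal{G}(\mathcal{P}_{0,1}(2k,1)) = \mathcal{G}(JL_{2k})$. One should check only that the hypothesis $k \ge 3$ keeps us in the regime where Lemma~\ref{lem:JL_GPm1} applies, which it does since $2k \ge 6 \ge 3$.

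Next I would apply Lemma~\ref{lem:JL_2k}, which states that $\mathcal{G}(JL_{2k}) = 0$ precisely when $k \ge 3$ — exactly the hypothesis of the present theorem. (The role of $k \ge 3$ is to guarantee that $JL_{2k}$ does not terminate after a single move, so that the second player's symmetric-move strategy, reflecting each first-player move through the antipodal vertex of $C_{2k}$, is always available.) Chaining the two equalities yields $\mathcal{G}(\mathcal{P}_{0,1}(2k,1)) = \mathcal{G}(JL_{2k}) = 0$, which is the claim.

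There is essentially no obstacle here: the theorem is a corollary packaging together Lemmas~\ref{lem:JL_GPm1} and~\ref{lem:JL_2k}. The only point worth a sentence of care is that the mirroring strategy underlying Lemma~\ref{lem:JL_2k} is a strategy on the single cycle $C_{2k}$ (antipodal reflection), not on a disjoint union, so no appeal to the disjoint-union/XOR rule is needed in this direction — the parity of $2k$ alone suffices. Hence the write-up can be kept to two or three sentences.
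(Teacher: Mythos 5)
Your proposal is correct and matches the paper's proof exactly: the theorem is stated there as an immediate consequence of Lemmas~\ref{lem:JL_GPm1} and~\ref{lem:JL_2k}, chained in precisely the way you describe. Your additional remarks on the role of $k \geq 3$ and on the single-cycle mirroring strategy are accurate but not needed beyond the two citations.
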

\begin{proof}
    This follows immediately from Lemmas \ref{lem:JL_2k} and \ref{lem:JL_GPm1}.
\end{proof}

An \emph{octal game} is an impartial ``take and break'' game that involves removing beans from heaps of beans \cite{BCG}. Each octal game has a specific octal code $$\cdot {\bf d}_1 {\bf d}_2 {\bf d}_3 \dotsc$$ which specifies the set of permissible moves in the take and break game. In this code, the $k$-th digit ${\bf d}_k$ is the sum of a (possibly empty) subset of $\{{\bf 1},{\bf 2},{\bf 4}\}$, where 
\begin{enumerate}
    \item[\{{\bf 1}\}] indicates that a heap can be completely removed by removing $k$ beans;
    \item[\{{\bf 2}\}] indicates that a heap can be reduced in size by removing $k$ beans; and
    \item[\{{\bf 4}\}] indicates that a heap can be split into two heaps of smaller respective sizes by removing $k$ beans.
\end{enumerate}
As an example, the octal game $\cdot {\bf 356}$ has ${\bf d}_1 = 3$, ${\bf d}_2 = 5$, and ${\bf d}_3 = 6$. This code stipulates that there are three options:
\begin{enumerate}
    \item ${\bf d}_1=3=1+2$ indicates that a player can remove one bean from a heap to either \{{\bf 1}\} completely remove that heap or \{{\bf 2}\} reduce the size of that heap.
    \item  ${\bf d}_2 = 5 = 1 + 4$ indicates that a player can remove two beans from a heap to either \{{\bf 1}\} completely remove that heap or \{{\bf 4}\} split that heap into two smaller heaps.
    \item ${\bf d}_3 = 6 = 2 + 4$ indicates that a player can remove three beans from a heap to either \{{\bf 2}\} reduce the size of that heap or \{{\bf 4}\} split that heap into two smaller heaps.
\end{enumerate}

The following result links Jacob's Ladder to an octal game. The proof is straightforward, so is left to the reader.

\begin{proposition}\label{prop:JL_octal}
    After any single move, Jacob's Ladder becomes the octal game $\cdot 11337$.
\end{proposition}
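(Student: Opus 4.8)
The plan is to follow the board through the game and then read off the octal code one digit at a time. The first move on the $m$-cycle $C_m$ deletes the five consecutive vertices forming $N_2[v]$ and leaves the path $P_{m-5}$; from then on the board is always a disjoint union of paths, since deleting $N_2[v]$ from a path leaves at most two shorter paths. So I would identify a path on $n$ vertices with a heap of $n$ beans and verify that the Jacob's-Ladder moves available on $P_n$ coincide exactly with the moves that $\cdot 11337$ permits on a heap of size $n$.

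The key computation is to describe, for $P_n = v_1\cdots v_n$ and a chosen $v_i$, the effect of deleting $N_2[v_i]\cap V(P_n) = \{v_{\max(1,i-2)},\dots,v_{\min(n,i+2)}\}$. Between $1$ and $5$ vertices are removed; exactly $5$ are removed precisely when $3\le i\le n-2$, and the path splits into two nonempty pieces $P_{i-3}$ and $P_{n-i-2}$ precisely when $4\le i\le n-3$. Every other move — an endpoint or near-endpoint move, or the move at $v_3$ or $v_{n-2}$ in a long enough path — removes $3$, $4$, or $5$ vertices and leaves a single path, while for small $n$ the deletion wipes out the whole path.

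From this I would read off each digit ${\bf d}_k$ by recording which of $\{{\bf 1}\}$ (remove a heap of exactly $k$), $\{{\bf 2}\}$ (shrink a larger heap by $k$ to a single nonempty heap), and $\{{\bf 4}\}$ (remove $k$ and split the remainder into two nonempty heaps) actually occurs when exactly $k$ vertices are deleted. Removing $1$ or $2$ vertices is possible only from $P_1$ or $P_2$, where it destroys the whole path, so ${\bf d}_1={\bf d}_2={\bf 1}=1$. Removing $3$ (resp.\ $4$) vertices destroys $P_3$ (resp.\ $P_4$) but from a longer path is an endpoint move that merely shrinks it by $3$ (resp.\ $4$); since a split forces removing all $5$ vertices of some $N_2[v]$, no splitting is possible, so ${\bf d}_3={\bf d}_4={\bf 1}+{\bf 2}=3$. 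Removing $5$ vertices destroys $P_5$, shrinks a longer path by $5$ (the move at $v_3$ or $v_{n-2}$), and splits $P_n$ into $P_{i-3}+P_{n-i-2}$ for $4\le i\le n-3$; as $i$ ranges, these realize every way of splitting the remaining $n-5$ beans into two nonempty heaps, matching the octal split option, so ${\bf d}_5={\bf 1}+{\bf 2}+{\bf 4}=7$. Finally ${\bf d}_k=0$ for $k\ge 6$ because $|N_2[v]|\le 5$. Hence the code is $\cdot 11337$.

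The whole thing is bookkeeping; the one place to be careful is to respect the octal conventions at the boundary — for instance, deleting $N_2[v_3]$ from a long path is a ${\bf 2}$-move (shrink), not a ${\bf 4}$-split, because one resulting piece is empty, and ``shrinking a heap to size $0$'' must not be counted separately from the ${\bf 1}$-option. Pinning down these edge cases correctly is the only real obstacle; everything else is immediate from the explicit form of $N_2[v]\cap V(P_n)$.
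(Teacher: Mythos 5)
Your argument is correct and complete: the paper leaves this proof to the reader as "straightforward," and your case analysis of $N_2[v_i]\cap V(P_n)$ — identifying paths with heaps, checking that exactly $1$ or $2$ vertices can only be removed by annihilating $P_1$ or $P_2$, that removals of $3$ or $4$ vertices are endpoint/near-endpoint moves which never split, and that only a full $5$-vertex removal can either annihilate, shrink, or split into two nonempty paths realizing every partition of $n-5$ — is exactly the intended bookkeeping yielding ${\bf d}_1{\bf d}_2{\bf d}_3{\bf d}_4{\bf d}_5 = 11337$ and ${\bf d}_k=0$ for $k\ge 6$. Your attention to the boundary convention (a move leaving one empty piece counts as a $\{{\bf 2}\}$-shrink or $\{{\bf 1}\}$-annihilation, never a $\{{\bf 4}\}$-split) is precisely the point that needs care, and you handle it correctly.
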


    \begin{remark}
    The Nimbers of octal game $\cdot 11337$ can be found in the On-Line Encyclopedia of Integer Sequences \cite{oeis}, see entry A071426. By Lemma \ref{lem:JL_GPm1} and Proposition \ref{prop:JL_octal}, the Toggle game $\mathcal{P}_{0,1}(m,1)$ becomes equivalent to the octal game after any initial Toggle move. This explains why the Nimbers of $\mathcal{P}_{0,1}(m,1)$ can be obtained from those of the octal game $\cdot 11337$ by removing the first three entries of sequence A071426 and changing each positive entry to $0$ and each $0$ to $1$. (The fact that each $0$ is changed to $1$ rather than to another positive integer follows from part 2 of Theorem \ref{thm:gp_nim_bounds}.)
\end{remark}

\begin{lemma}\label{P01(m,2)}
    In a game of Toggle on $\mathcal{P}_{0,1}(m,2)$, vertex $v$ is terminally unplayable once it becomes unplayable.
\end{lemma}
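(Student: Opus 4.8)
The plan is to follow the template of the proof of Lemma~\ref{lemma:unplayable_GPn1}, but since the inner polygon $\{m,2\}$ is more tightly connected than the inner cycle of $P(m,1)$, I would replace the informal ``the pattern repeats'' step by an explicit invariant maintained along every play sequence. The preliminary step is to record the local playability conditions in $\mathcal{P}_{0,1}(m,2)$, where every vertex has degree $3$: a vertex of weight $1$ is playable if and only if at least two of its three neighbours have weight $1$. In particular an inner vertex $v_{0,j}$ is playable exactly when $\omega(v_{0,j})=1$ and at least two of $v_{0,j-2},v_{0,j+2},v_{1,j}$ have weight $1$, and an outer vertex $v_{1,j}$ with $\omega(v_{0,j})=0$ is playable exactly when $v_{1,j}$ and both of $v_{1,j-1},v_{1,j+1}$ have weight $1$.

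Next I would prove, by induction on the number of moves, that after any legal play sequence from $\mathcal{P}_{0,1}(m,2)$ the following hold, where $S$ denotes the set of outer vertices that have already been played: (i) no inner vertex has ever been played; (ii) no two members of $S$ lie within cyclic distance $2$ on the outer cycle, and each member of $S$ has been played exactly once; (iii) $\omega(v_{1,j})=0$ and $\omega(v_{0,j})=1$ for every $v_{1,j}\in S$; (iv) $\omega(v_{0,j})=0$ for every $v_{1,j}\notin S$, and for such $j$ one has $\omega(v_{1,j})=1$ precisely when neither $v_{1,j-1}$ nor $v_{1,j+1}$ lies in $S$. Granting (i)--(iv), the playability conditions from the preliminary step show that the playable vertices are exactly the outer vertices $v_{1,j}$ with $N_2[v_{1,j}]\cap S=\emptyset$; that is, the game is a disguised copy of $JL_m$ in which a move simply inserts $v_{1,j}$ into $S$. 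In particular no inner vertex is ever playable --- if $v_{0,j}$ were playable, then (iii)--(iv) would force $j\in S$ together with $j-2\in S$ or $j+2\in S$, contradicting the $2$-independence of $S$ --- which is exactly what closes clause (i) in the induction.

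The lemma then follows quickly: an inner vertex is never playable, so it is vacuously terminally unplayable once (never) unplayable; an outer vertex $v_{1,j}\in S$ has weight $0$ from the moment it is played and can only be re-flipped by a play at $v_{1,j-1}$ or $v_{1,j+1}$, both of which lie within distance $1$ of a member of $S$ and hence can never be played, so $v_{1,j}$ stays unplayable forever; and an outer vertex $v_{1,j}\notin S$ that is currently unplayable must satisfy $N_2[v_{1,j}]\cap S\neq\emptyset$, a condition preserved as $S$ only grows, so $v_{1,j}$ never enters $S$, $\omega(v_{0,j})$ stays $0$, and $v_{1,j}$ remains unplayable. The main obstacle I expect is the induction step for clause (ii): one must check that striking a playable $v_{1,j}$ (so $N_2[v_{1,j}]\cap S=\emptyset$) keeps $S$ $2$-independent and that such a vertex can never be struck a second time, and this in turn rests on the delicate degree-$3$ bookkeeping in clause (iv) --- most pointedly the fact that an outer vertex has at most one of its two outer neighbours in $S$, so the ``weight $1$'' case of (iv) never collapses into a second flip.
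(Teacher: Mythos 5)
Your proof is correct. It establishes the same two structural facts the paper's proof rests on --- that inner vertices of $\mathcal{P}_{0,1}(m,2)$ never become playable, and that the played outer vertices always form a set no two of which are within cyclic distance $2$, so the game is $JL_m$ in disguise --- but it gets there by a different route. The paper analyzes only the \emph{first} move in detail, leans on Proposition \ref{prop:degree3penultimately} (applied to the outer cycle viewed as $C_m$) to handle outer vertices in the absence of inner interference, and then closes with the informal assertion that ``all future moves result in the same pattern as that obtained by the initial move.'' You instead dispense with Proposition \ref{prop:degree3penultimately} entirely and replace the pattern-repetition claim with an explicit four-part invariant, verified by induction on the number of moves, from which the full playability structure at every stage (not just after the first move) is read off directly via the degree-$3$ criterion ``playable iff weight $1$ and at least two neighbours of weight $1$.'' Your clauses (i)--(iv) do check out: the $2$-independence of $S$ kills inner playability, the weight bookkeeping in (iii)--(iv) is preserved by a move at any $v_{1,j}$ with $N_2[v_{1,j}]\cap S=\emptyset$, and the terminal-unplayability conclusions for the three classes of vertices follow as you state. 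What your approach buys is self-containedness and rigor at the step the paper treats most casually; what the paper's buys is brevity and reuse of the earlier machinery. Either is acceptable, though your invariant formulation also yields Lemma \ref{lem:JL_GPm2} (the equivalence with $JL_m$) essentially for free.
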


\begin{proof}
    Proposition \ref{prop:degree3penultimately} implies that in a game of Toggle on $C_m$ with initial assignment $V_0^{(0)}=\emptyset$, any vertex $v_i$ is terminally unplayable once it becomes unplayable. 

    The outer cycle of $\mathcal{P}_{0,1}(m,2)$ may be thought of as a Toggle game on $C_m$ with $V_0^{(0)}=\emptyset$. So for each outer vertex $v_{1,i}$, being unplayable is equivalent to being terminally unplayable unless some inner vertex $v_{0,j}$ affects the playability of $v_{1,i}$.

    Suppose the initial move is made at $v_{1,i}$. Then $\omega^{(1)}(v_{1,i-1})=\omega^{(1)}(v_{1,i})=\omega^{(1)}(v_{1,i+1})=0$ and $\omega^{(1)}(v_{0,i})=1$. The vertex $v_{1,i-2}$ can only become playable if either $\omega^{(\ell)}(v_{0,i-2})=1$ or $\omega^{(\ell)}(v_{1,i-1})=1$ at some stage $\ell$. Note that for an inner vertex to become playable, Toggle moves must be made on at least two outer vertices that are distance $2$ apart. Thus since $v_{1,i-2}$ is unplayable, it follows that $v_{1,i-2}$ is terminally unplayable. (Note that the argument for $v_{1,i+2}$ is symmetric.) This implies that all inner vertices are terminally unplayable at the initial stage since no two outer vertices that are distance two apart can both be played within the same game.

    Finally, for the vertex $v_{1,i-1}$ to become playable, we must have $\omega^{(\ell)}(v_{1,i-1})=1$ at some stage $\ell$. This can only occur if a Toggle move is made on $v_{1,i-2}$, $v_{1,i}$, or $v_{0,i-1}$. Since $v_{1,i-2}$ and all inner vertices are terminally unplayable, a move must be made on $v_{1,i}$. However, $v_{1,i}$ can only become playable if $\omega^{(\ell)}(v_{1,i-1})=1$ or $\omega^{(\ell)}(v_{1,i+1})=1$ at some stage $\ell$. Therefore $v_{1,i}$ and $v_{1,i-1}$ (and by symmetry, $v_{1,i+1}$) are terminally unplayable. Thus any vertex $v$ which becomes unplayable at the initial move also becomes terminally unplayable. Because of this and the structure of $\mathcal{P}_{0,1}(m,2)$, all future moves result in the same pattern as that obtained by the initial move. Therefore any vertex $v$ in $\mathcal{P}_{0,1}(m,2)$ is terminally unplayable the moment it becomes unplayable.
\end{proof}

\begin{lemma}\label{P10(m,2)}
    In a game of Toggle on $\mathcal{P}_{1,0}(m,2)$, vertex $v$ is terminally unplayable once it becomes unplayable.
\end{lemma}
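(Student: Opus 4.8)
The plan is to dualize the proof of Lemma~\ref{P01(m,2)}, interchanging the roles of the inner and outer vertices. The relevant structural fact is that the inner star polygon $\{m,2\}$ is an $m$-cycle when $m$ is odd and a disjoint union of two $\frac m2$-cycles when $m$ is even; in both cases $v_{0,a}$ and $v_{0,b}$ are adjacent in $\{m,2\}$ exactly when $b\equiv a\pm2\pmod m$. Regarding the inner vertices together with their $\{m,2\}$-edges as a standalone Toggle graph with empty zero-set, Proposition~\ref{prop:degree3penultimately} (via Proposition~\ref{prop:PathCycle}) says it is penultimately unplayable; in particular, once a move is made at an inner vertex $v_{0,i}$, every inner vertex in its $\{m,2\}$-closed-neighborhood is terminally unplayable in the standalone game, so no two $\{m,2\}$-adjacent inner vertices can both be played there. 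A few small cases $m<5$ would be checked directly.

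I would first observe that, as long as no outer vertex has ever been played, the sequence of inner moves in $\mathcal{P}_{1,0}(m,2)$ is a legal play of the standalone Toggle game on $\{m,2\}$, and the inner weights in the two games agree throughout. Indeed, an inner vertex's weight is altered only by inner moves (a move at $v_{0,i}$ flips $v_{0,i-2},v_{0,i},v_{0,i+2}$ and $v_{1,i}$), and $v_{0,i}$ being playable in $\mathcal{P}_{1,0}(m,2)$ requires $\omega(v_{0,i})=1$ together with $\sigma(v_{0,i})=\omega(v_{0,i-2})+\omega(v_{0,i})+\omega(v_{0,i+2})+\omega(v_{1,i})\ge 3$, which forces $\omega(v_{0,i-2})+\omega(v_{0,i})+\omega(v_{0,i+2})\ge 2$, i.e.\ playability in the standalone game.

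The heart of the argument is the claim that no outer vertex ever becomes playable, which I would prove by induction on the number of moves. Suppose $v_{1,j}$ is the first outer vertex ever to be playable, so every earlier move is an inner move; since $\deg(v_{1,j})=3$, playability forces $\omega(v_{1,j})=1$ and at least two of its neighbours $v_{1,j-1},v_{1,j+1},v_{0,j}$ to have weight $1$, and, no outer move having been made, $\omega(v_{1,\ell})$ equals the parity of the number of times $v_{0,\ell}$ has been played. If the two ``on'' neighbours are $v_{1,j-1}$ and $v_{1,j+1}$, then $v_{0,j-1}$ and $v_{0,j+1}$ have each been played, and these form a $\{m,2\}$-adjacent pair; if instead $v_{0,j}$ is one of them, then $\omega(v_{0,j})=1$ while $v_{0,j}$ has been played an odd number of times (from $\omega(v_{1,j})=1$), forcing at least one of $v_{0,j-2},v_{0,j+2}$ to have been played, again a $\{m,2\}$-adjacent pair together with $v_{0,j}$. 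Either way the standalone play on $\{m,2\}$ would include two $\{m,2\}$-adjacent moves, contradicting penultimate unplayability. Hence the whole game consists of inner moves only, so the inner weights evolve exactly as in the standalone game on $\{m,2\}$ and every inner move made is standalone-legal.

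It then remains to show each inner vertex is terminally unplayable once unplayable (outer vertices are vacuously so, being never playable). If $v_{0,i}$ is unplayable in the standalone game, it is terminally so by Proposition~\ref{prop:degree3penultimately}, and this forces it to remain unplayable in $\mathcal{P}_{1,0}(m,2)$, since standalone-unplayability means $\omega(v_{0,i})=0$ or $\omega(v_{0,i-2})=\omega(v_{0,i+2})=0$, and in either case $\sigma(v_{0,i})\le2<3$. The only remaining possibility is that $v_{0,i}$ is standalone-playable but $\mathcal{P}_{1,0}$-unplayable, i.e.\ $\omega(v_{0,i})=1$, exactly one of $v_{0,i\pm2}$ is on (say $v_{0,i+2}$ is off), and the spoke $v_{1,i}$ is off; here I would argue none of these can be undone. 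The quantity $\omega(v_{1,i})$ can change only if $v_{0,i}$ is played, impossible while $v_{0,i}$ is unplayable; and $\omega(v_{0,i+2})$ can change only if $v_{0,i}$, $v_{0,i+2}$, or $v_{0,i+4}$ is played, where $v_{0,i}$ is unplayable, $v_{0,i+2}$ has weight $0$ hence is unplayable, and $v_{0,i+4}$ (having a weight-$0$ neighbour $v_{0,i+2}$) can be playable only if its spoke $v_{1,i+4}$ is on, which forces $v_{0,i+4}$ to have already been played and thus to be terminally standalone-unplayable. Hence $\sigma(v_{0,i})$ stays below $3$ and $v_{0,i}$ stays unplayable. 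I expect this final bookkeeping — tracking whether a played spoke can resurrect an inner vertex — to be the main obstacle; the parity of $m$ enters only through the shape of $\{m,2\}$ and so is handled uniformly.
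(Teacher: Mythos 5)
Your proof is correct and follows essentially the same route as the paper's: treat the inner star polygon $\{m,2\}$ as a standalone Toggle game on one or two cycles, rule out the two ways an outer vertex could ever reach $\sigma \ge 3$ (both outer neighbours on, or the spoke on), and conclude via Propositions \ref{prop:PathCycle} and \ref{prop:degree3penultimately}. Your write-up is if anything more careful than the paper's in the last step, where you verify explicitly (with the parity bookkeeping on spokes) that a once-played inner vertex cannot later resurrect an unplayable neighbour, rather than arguing this only for the vertices affected by the initial move.
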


\begin{proof}
    Proposition \ref{prop:degree3penultimately} implies that in a game of Toggle on $C_m$ with initial assignment $V_0^{(0)}=\emptyset$, any vertex $v_i$ is terminally unplayable once it becomes unplayable. The inner star polygon $\{m,2\}$ of $\mathcal{P}_{1,0}(m,2)$ may be thought of as a Toggle game on $C_m$ (or two copies of $C_{m/2}$ if $m$ is even) with $V_0^{(0)}=\emptyset$. So for each inner vertex $v_{0,i}$, being unplayable is equivalent to being terminally unplayable unless some outer vertex affects the playability of $v_{0,i}$.
    
    Suppose the initial move is made at $v_{0,i}$. Then $\omega^{(1)}(v_{0,i-2})=\omega^{(1)}(v_{0,i})=\omega^{(1)}(v_{1,i+2})=0$ and $\omega^{(1)}(v_{1,i})=1$. Since $v_{0,i-2}$ is terminally unplayable in its inner cycle within the inner star polygon, it can become playable only if $\omega^{(\ell)}(v_{1,i-2})=1$ at some stage $\ell$. Since $v_{0,i-2}$ is unplayable, this can only occur if a move is made on an outer vertex. One of the following two cases must occur for the first outer vertex to become playable.
    
    \noindent \textbf{Case 1:} $\omega^{(\ell)}(v_{1,j-2})=\omega^{(\ell)}(v_{1,j-1})=\omega^{(\ell)}(v_{1,j})=1$ at some stage $\ell$. For this to occur, moves must be made at inner vertices $v_{0,j}$, $v_{0,j-1}$, and $v_{0,j-2}$. But if a move is made on $v_{0,j}$, then a move cannot be made at $v_{0,j-2}$ prior to a move at another outer vertex. Thus, this case cannot arise. 

    \noindent \textbf{Case 2:} $\omega^{(\ell)}(v_{1,j-1})=\omega^{(\ell)}(v_{1,j})=\omega^{(\ell)}(v_{0,j})=1$ at some stage $\ell$. For this to occur, moves must be made at $v_{0,j}$ and $v_{0,j-1}$, followed by a move at an inner neighbor of $v_{0,j}$. But a move cannot be made at $v_{0,j-2}$ or $v_{0,j+2}$ prior to a move at another outer vertex, so this case is impossible as well.

    Since neither of these cases is possible, it follows that all outer vertices are terminally unplayable at the initial stage. This further implies that $v_{0,i-2}$ is terminally unplayable. (The argument for $v_{0,i+2}$ is symmetric.) Since $\omega^{(1)}(v_{0,i})=0$ and all neighbors of $v_{0,i}$ are terminally unplayable, $v_{0,i}$ is also terminally unplayable.

    Finally, the vertex $v_{0,i-4}$ can only become playable if either $\omega^{(\ell)}(v_{0,i-2})=1$ or $\omega^{(\ell)}(v_{1,i-4})=1$ at some stage $\ell$. Since all outer vertices are terminally unplayable and $v_{0,1}$ is terminally unplayable, this is only possible if a move is made at $v_{0,i-4}$. We conclude that $v_{0,i-4}$ is terminally unplayable. (By symmetry, $v_{0,i+4}$ is also terminally unplayable.) Thus any vertex $v$ which becomes unplayable at the initial move also becomes terminally unplayable. Because of this and the structure of $\mathcal{P}_{1,0}(m,2)$, all future moves result in the same pattern as that obtained by the initial move. Therefore, any vertex $v$ in $\mathcal{P}_{1,0}(m,2)$ is terminally unplayable the moment it becomes unplayable.
\end{proof}

\begin{lemma}\label{lem:JL_GPm2}
    $\mathcal{G}(JL_m)=\mathcal{G}(\mathcal{P}_{0,1}(m,2))=\mathcal{G}(\mathcal{P}_{1,0}(m,2))$.
\end{lemma}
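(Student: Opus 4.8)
The plan is to establish $\mathcal{G}(\mathcal{P}_{0,1}(m,2)) = \mathcal{G}(JL_m)$ and $\mathcal{G}(\mathcal{P}_{1,0}(m,2)) = \mathcal{G}(JL_m)$ separately, in each case reducing the Toggle game to Jacob's Ladder by the same mechanism used for $P(m,1)$ in Lemma~\ref{lem:JL_GPm1}. Both reductions are genuinely needed: unlike the $k=1$ case, Corollary~\ref{cor:isomorphism}(2) does not directly relate $\mathcal{P}_{0,1}(m,2)$ and $\mathcal{P}_{1,0}(m,2)$ for general $m$ (that would require $4 \equiv 1 \pmod{m}$).

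I would first treat $\mathcal{P}_{0,1}(m,2)$. By Lemma~\ref{P01(m,2)} every vertex is terminally unplayable the instant it becomes unplayable, and its proof shows that all inner vertices of $P(m,2)$ are terminally unplayable from the outset. Hence the whole game lives on the outer $m$-cycle, which starts with every vertex of weight $1$, and a move at $v_{1,i}$ toggles only $v_{1,i-1},v_{1,i},v_{1,i+1}$ together with the inert vertex $v_{0,i}$. Since the inner adjacencies of $P(m,2)$ never come into play, this is the reduced game analyzed in the proof of Lemma~\ref{lem:JL_GPm1} for $\mathcal{P}_{0,1}(m,1)$: a playable outer vertex has weight $1$ and its unique inner neighbour has weight $0$, so the legality inequality $\sigma^{(j+1)}<\sigma^{(j)}$ holds exactly when both of its outer-cycle neighbours have weight $1$, and playing it then renders terminally unplayable exactly the currently playable outer vertices within distance two of it. Thus the game is equivalent to $JL_m$, and $\mathcal{G}(\mathcal{P}_{0,1}(m,2)) = \mathcal{G}(\mathcal{P}_{0,1}(m,1)) = \mathcal{G}(JL_m)$.

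I would then treat $\mathcal{P}_{1,0}(m,2)$, where Lemma~\ref{P10(m,2)} plays the analogous role: every vertex is terminally unplayable once unplayable, and all outer vertices are terminally unplayable from the start, so the game lives on the inner star polygon $\{m,2\}$ with every vertex of weight $1$. If $m$ is odd then $\gcd(m,2)=1$, so $\{m,2\}$ is a single $m$-cycle, and the same reasoning identifies the game with $JL_m$, giving $\mathcal{G}(\mathcal{P}_{1,0}(m,2)) = \mathcal{G}(JL_m)$. If $m$ is even then $\{m,2\}$ is a disjoint union of two $(m/2)$-cycles (no inner vertex of one is adjacent to the other, and all outer vertices are inert), so the game decomposes as $JL_{m/2}+JL_{m/2}$; since $k=2\le\lfloor\frac{m-1}{2}\rfloor$ forces $m\ge 5$, we have $m/2\ge 3$, and Lemma~\ref{lem:JL_2k} yields $\mathcal{G}(JL_{m/2}+JL_{m/2}) = \mathcal{G}(JL_m) = 0$. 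In both parities $\mathcal{G}(\mathcal{P}_{1,0}(m,2)) = \mathcal{G}(JL_m)$, which completes the proof.

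The step I expect to require the most care in the write-up — and which I would spell out rather than merely cite from Lemma~\ref{lem:JL_GPm1} — is the claim that a move at a playable cycle-vertex removes from play exactly the currently playable vertices within distance two of it, so that the induced game is genuinely Jacob's Ladder and nothing larger. The cleanest packaging is an invariant maintained throughout the game: the playable vertices form a union of arcs of the cycle, all of weight $1$, with the two boundary vertices of each maximal arc also of weight $1$. One then checks that a legal move zeroes the chosen vertex and its two cycle-neighbours, that this costs the two vertices at distance two a weight-$1$ neighbour and hence kills their playability, that the vertices at distance three retain two weight-$1$ neighbours and survive, and that the newly exposed arc-boundaries are again of weight $1$. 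Everything else is bookkeeping; the only ingredient genuinely beyond Lemma~\ref{lem:JL_GPm1} is the appeal to Lemma~\ref{lem:JL_2k} in the even-$m$ case.
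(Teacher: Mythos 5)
Your proposal is correct and follows essentially the same route as the paper: reduce $\mathcal{P}_{0,1}(m,2)$ to $JL_m$ on the outer cycle via Lemma~\ref{P01(m,2)}, reduce $\mathcal{P}_{1,0}(m,2)$ to $JL_m$ or $JL_{m/2}+JL_{m/2}$ on the inner star polygon via Lemma~\ref{P10(m,2)}, and close the even case with Lemma~\ref{lem:JL_2k}. Your extra care in verifying $m/2\ge 3$ and in justifying the ``distance two'' removal is a welcome tightening but not a different argument.
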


\begin{proof}
    Consider a game of Toggle with initial position $\mathcal{P}_{0,1}(m,2)$ and let $G$ be the subgraph whose vertex set consists of all playable vertices of $P(m,2)$. By Lemma \ref{P01(m,2)}, all inner vertices are terminally unplayable, so $G$ is initially the cycle $C_m$ comprised of the outer vertices of $P(m,2)$. Moreover, any vertex $v$ is terminally unplayable the moment it becomes unplayable, so whenever a vertex $v$ is removed from $G$ it remains removed for the entire game. Since a Toggle move at vertex $v$ renders all vertices within distance $2$ of $v$ unplayable, we conclude that the game is equivalent to $JL_m$.

    Now consider a game of Toggle with initial position $\mathcal{P}_{1,0}(m,2)$ and again let $G$ be the subgraph whose vertex set consists of all playable vertices of $P(m,2)$. By Lemma \ref{P10(m,2)}, all outer vertices are terminally unplayable, so $G$ is initially the inner star polygon $\{m,2\}$. If $m$ is odd, $G$ is simply the cycle $C_m$, whereas if $m$ is even $G$ is two disjoint copies of $C_{m/2}$. Again, any vertex $v$ is terminally unplayable the moment it becomes unplayable, so a vertex removed from $G$ remains removed for the entire game. Thus the game is equivalent to $JL_m$ if $m$ is odd or to two disjoint copies of $JL_{m/2}$ if $m$ is even. By Lemma \ref{lem:JL_2k}, $\mathcal{G}(JL_m)=\mathcal{G}(JL_{m/2} + JL_{m/2})$ for $m$ even, so in either case $\mathcal{G}(\mathcal{P}_{1,0}(m,2)) = \mathcal{G}(JL_m)$. We conclude that $\mathcal{G}(JL_m)=\mathcal{G}(\mathcal{P}_{0,1}(m,2))=\mathcal{G}(\mathcal{P}_{1,0}(m,2))$.
\end{proof}

\begin{theorem}\label{thm:gpm1_m2}
    $\mathcal{G}(\mathcal{P}_{0,1}(m,1))=\mathcal{G}(\mathcal{P}_{1,0}(m,1)) = \mathcal{G}(\mathcal{P}_{0,1}(m,2))=\mathcal{G}(\mathcal{P}_{1,0}(m,2))$.
\end{theorem}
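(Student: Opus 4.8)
The plan is to observe that this theorem is an immediate corollary of the two preceding lemmas, both of which compare the relevant Toggle games to Jacob's Ladder $JL_m$. Concretely, Lemma~\ref{lem:JL_GPm1} gives
$$\mathcal{G}(JL_m) = \mathcal{G}(\mathcal{P}_{0,1}(m,1)) = \mathcal{G}(\mathcal{P}_{1,0}(m,1)),$$
while Lemma~\ref{lem:JL_GPm2} gives
$$\mathcal{G}(JL_m) = \mathcal{G}(\mathcal{P}_{0,1}(m,2)) = \mathcal{G}(\mathcal{P}_{1,0}(m,2)).$$
Since the left-hand sides agree, transitivity of equality yields the claimed chain of equalities, and there is nothing further to prove.

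If one wanted to be slightly more self-contained, the single line to emphasize is that all four Toggle positions reduce, after stripping off the terminally unplayable vertices (inner vertices in the $\mathcal{P}_{0,1}$ cases via Lemmas~\ref{lemma:unplayable_GPn1} and~\ref{P01(m,2)}, outer vertices in the $\mathcal{P}_{1,0}$ cases via Corollary~\ref{cor:isomorphism}(2) and Lemma~\ref{P10(m,2)}), to a game on a cycle in which a move at $v$ deletes $N_2[v]$ permanently — that is, to $JL_m$ — with the only wrinkle being that $\{m,2\}$ is disconnected when $m$ is even, handled by Lemma~\ref{lem:JL_2k} via $\mathcal{G}(JL_m)=\mathcal{G}(JL_{m/2}+JL_{m/2})$. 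But all of this is already carried out in the cited lemmas, so the proof of the theorem proper is just the two-line concatenation above.

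There is no real obstacle here: the substantive work — establishing that vertices become terminally unplayable the moment they become unplayable, and identifying the resulting restricted game with $JL_m$ — has been completed in Lemmas~\ref{lemma:unplayable_GPn1}, \ref{lem:JL_GPm1}, \ref{P01(m,2)}, \ref{P10(m,2)}, and~\ref{lem:JL_GPm2}. The theorem merely records the common value, so I would state the proof as ``Combine Lemmas~\ref{lem:JL_GPm1} and~\ref{lem:JL_GPm2}.''
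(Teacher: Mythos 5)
Your proposal is correct and is essentially identical to the paper's own proof, which reads in full: ``This follows immediately from Lemmas \ref{lem:JL_GPm1} and \ref{lem:JL_GPm2}.'' The extra paragraph summarizing the reductions to $JL_m$ is accurate but not needed, since that work is already contained in the cited lemmas.
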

\begin{proof}
    This follows immediately from Lemmas \ref{lem:JL_GPm1} and \ref{lem:JL_GPm2}.
\end{proof}

Tables \ref{tab:NimbersP01} and \ref{tab:NimbersP10} in Appendix B reflect Theorem \ref{thm:gpm1_m2}. See also entry A361517 in the On-line Encyclopedia of Integer Sequences \cite{oeis}.



\section{Quantified Constraint Logic}

We take the following standard definitions from \cite{LS77}:

\begin{definition}
    A propositional formula is in \emph{conjunctive normal form (CNF)} provided it consists of a conjunction of disjunctions of literals. This is often restated as being intersections of unions.
\end{definition}

\begin{definition}
     Let ${\rm{SPACE}}(n^k)$ be the class of languages accepted by deterministic Turing machines within space $n^k$. The class of languages $\rm{PSPACE}$ is defined as
     $${\rm{PSPACE}} = \bigcup\limits_{k=1}^{\infty}{\rm{SPACE}}(n^k)$$
\end{definition}

\begin{definition}
    By \emph{logspace} we refer to the class of functions computable by deterministic Turing machines within space $log(n)$.
\end{definition}

\begin{definition}
    Let $\Theta$ and $\Delta$ be finite alphabets. We define $\Theta^+$ and $\Delta^+$ to be finite strings in the alphabets $\Theta$ and $\Delta$, respectively. For $A\subseteq \Theta^{+}$ and $B\subseteq \Delta^{+}$, we let $f: \Theta^+ \rightarrow \Delta^+$ be a transformation with $f\in \text{logspace}$ such that $x\in A$ if and only if $f(x) \in B$ for all $x\in \Theta^{+}$. In such case we say \emph{$A\subseteq \Theta^{+}$ transforms (i.e.\ reduces to) $B\subseteq \Delta^{+}$ within logspace via $f$}, and denote this by $A\leq_{log} B$ via $f$.
\end{definition}

We describe a \emph{Quantified Boolean Formula (QBF) decision problem} as follows: Given a set $\widehat \beta = \{\beta_1, ..., \beta_n\}$ of Boolean variables and a quantified Boolean $CNF$ formula $\varphi$, decide whether or not $\varphi$ evaluates to True.

We refer to a $QBF$ with exactly $3$ variables in each clause of the $CNF$ formula as $3$-$QBF$. The following result appears in \cite{DD00}.

\begin{lemma}\label{lem:3QBF}
$3$-$QBF$ is PSPACE-complete.
\end{lemma}


Schaefer \cite{TS78} generalizes the $QBF$ problem to an impartial, two-player game in the following manner: A game instance of $QBF$, designated as $G_\omega (QBF) $, takes as input a set of indexed Boolean variables and a formula, as above. The game consists of two players who take turns assigning values to the variables, e.g.\ Player $1$ assigns a value to variable $\beta_1$, Player $2$ assigns a value to $\beta_2$, and so on until a value is assigned to $\beta_n$ ending the game. (Note that the player who assigns the variable $\beta_n$ depends on the parity of $n$.) Player $1$ wins if and only if the formula $\varphi$ evaluates to True. In logical terms, this is expressed as $$(\exists \beta_1)\;( \forall\beta_2) \; (\exists \beta_3) \; \dots \; (\exists\beta_n \text{ or }\forall\beta_n) : \varphi.$$ Schaefer also extends Lemma \ref{lem:3QBF} to $G_\omega(QBF)$ and $G_\omega(3$-$QBF)$.

Stockmeyer goes on to prove the following result in \cite{LS77}.

\begin{proposition}\label{prop:log-complete}
    Let $\vartheta$ be a set and $\varepsilon$ be a class of sets. Then $\vartheta$ is log-complete in $\varepsilon$ if and only if there exists a function $f$ such that $\varepsilon \leq_{log} \vartheta$ via $f$ where $\vartheta\in\varepsilon$.    
\end{proposition}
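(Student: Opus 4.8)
The statement is, in essence, a restatement of the definition of log-completeness phrased in terms of the reduction relation $\leq_{log}$, so the plan is to prove it by carefully unwinding definitions on both sides of the biconditional. Recall (following Stockmeyer \cite{LS77}) that $\vartheta$ being \emph{log-complete in} $\varepsilon$ amounts to two requirements holding simultaneously: (i) $\vartheta \in \varepsilon$ (the \emph{membership} clause) and (ii) for every $A \in \varepsilon$ there is a logspace-computable transformation $f_A$ with $x \in A \iff f_A(x) \in \vartheta$ for all $x$ (the \emph{hardness} clause, i.e.\ $A \leq_{log} \vartheta$ via $f_A$). The right-hand side of the asserted equivalence bundles exactly these two clauses together, reading ``$\varepsilon \leq_{log} \vartheta$ via $f$'' as the uniform statement that a single scheme $f$ (taking an index for a set $A \in \varepsilon$ together with an input word) witnesses $A \leq_{log} \vartheta$ for every $A \in \varepsilon$, and ``where $\vartheta \in \varepsilon$'' as clause (i) verbatim.

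For the forward direction I would assume $\vartheta$ is log-complete in $\varepsilon$. Clause (i) gives $\vartheta \in \varepsilon$ with nothing to prove. Clause (ii) supplies, for each $A \in \varepsilon$, a logspace transformation $f_A$ reducing $A$ to $\vartheta$; collecting these into the scheme $f$ (which remains logspace, since on a fixed index it is simply $f_A$) yields a function $f$ with $\varepsilon \leq_{log} \vartheta$ via $f$, as required. For the converse I would assume the existence of $f$ with $\varepsilon \leq_{log} \vartheta$ via $f$ together with $\vartheta \in \varepsilon$. The second hypothesis is clause (i). Specializing $f$ to each fixed $A \in \varepsilon$ produces a logspace reduction $A \leq_{log} \vartheta$, which is precisely clause (ii). Hence $\vartheta$ is log-complete in $\varepsilon$, completing the biconditional.

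The only point requiring care --- and the main obstacle, though it is a matter of bookkeeping rather than mathematics --- is making precise what ``$\varepsilon \leq_{log} \vartheta$ via $f$'' should mean when the left argument $\varepsilon$ is a class of sets rather than a single set, since the definition of $\leq_{log}$ in the excerpt is stated only between two individual sets. Once one adopts the natural convention that this abbreviates ``$A \leq_{log} \vartheta$ (via the appropriate specialization of $f$) for every $A \in \varepsilon$,'' both implications are immediate. Beyond this definitional alignment there is no further content to extract; the proposition is stated in this form by Stockmeyer in \cite{LS77} precisely to package membership-plus-hardness into a single convenient criterion.
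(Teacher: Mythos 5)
Your proposal is correct, but there is nothing in the paper to compare it against: the paper states this proposition as a result of Stockmeyer \cite{LS77} and supplies no proof at all, only the informal gloss that follows it. Your definitional unwinding is the natural (indeed the only) argument, since the statement is essentially a repackaging of the definition of log-completeness into the single criterion ``membership plus uniform hardness.'' The one point you rightly flag --- what ``$\varepsilon \leq_{log} \vartheta$ via $f$'' means when the left argument is a class rather than a set --- is a genuine ambiguity in the paper's phrasing, and your convention (a single scheme $f$ whose specialization to each $A \in \varepsilon$ witnesses $A \leq_{log} \vartheta$) is the reading under which both directions of the biconditional go through by inspection. The only residual caveat is that assembling the family $\{f_A\}_{A \in \varepsilon}$ into one logspace-computable scheme in the forward direction tacitly presumes some effective indexing of $\varepsilon$; for the intended application ($\varepsilon = \mathrm{PSPACE}$) this is standard, and for the purposes of this paper the per-set reading suffices, so nothing is lost.
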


Intuitively this says, if  every problem that is PSPACE-complete can be logspace reduced from some problem $\vartheta\in \rm{PSPACE}$, then $\vartheta$ is log-complete in PSPACE.

We now provide a context for applying the above results to the spatial complexity of Toggle. Framework for the following proposition is taken from Schaefer \cite{TS78}.

\begin{proposition}\label{prop:PSPACE}
    Given any position $\mathcal{S}$ in a Toggle game there exists a polynomial space algorithm for determining the winner, i.e.\ the game of Toggle is in PSPACE.
\end{proposition}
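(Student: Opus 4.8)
The plan is to exhibit a straightforward recursive (depth-first) procedure that evaluates the game tree of a Toggle position while reusing space, and to bound its space consumption by a polynomial in the size of the input position $\mathcal{S} = \{G, V_0^{(j)}\}$. First I would recall the key structural fact established early in the paper: every legal Toggle move strictly decreases $\sigma^{(j)}(G) = |V_1^{(j)}(G)|$, so any play of Toggle from $\mathcal{S}$ terminates after at most $|V(G)|$ moves. This bounds the \emph{depth} of the game tree by $n := |V(G)|$, which is at most the length of the input. The branching factor at each node is at most $n$ (one child per playable vertex). Hence the game tree has at most $n^n$ leaves, but — crucially — we never materialize the whole tree.

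Next I would describe the algorithm \textsc{Win}$(\mathcal{S})$: it enumerates the playable vertices $v$ of $\mathcal{S}$ one at a time; for each, it computes the child position $\mathcal{S}_v$ by toggling the weights on $N[v]$ (checking the requirement $\sigma^{(j+1)}(v) < \sigma^{(j)}(v)$), recursively calls \textsc{Win}$(\mathcal{S}_v)$, and if that call reports ``losing for the player to move'' then \textsc{Win}$(\mathcal{S})$ returns ``winning''; if no such $v$ exists (either no playable vertex, or every child is winning for its mover) it returns ``losing''. This is exactly the $P$/$N$ recursion justified by the Sprague–Grundy framework recalled in Section 1. To make this determine \emph{the winner} of $\mathcal{S}$, we simply output ``first player wins'' iff \textsc{Win}$(\mathcal{S})$ returns ``winning''. (If one wants the full Nimber, the same recursion computes $\mex$ of the children's values; the Nimber of any position is bounded by $n$, so it fits in $O(\log n)$ bits and the argument is unchanged.)

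Then I would do the space accounting, which is the heart of the matter. Each activation record of \textsc{Win} stores: a copy of the current position (the graph $G$, which is fixed and can be shared or re-derived, plus the current weight vector, $O(n)$ bits), the index of the vertex $v$ currently being tried ($O(\log n)$ bits), and a boolean accumulator. So each frame uses $O(n)$ space, or $O(n \log |G|)$ if we recopy the graph; either way it is polynomial in $|\mathcal{S}|$. Since the recursion depth is at most $n$ by the monovariant $\sigma$, the total space is $O(n^2)$ (or $O(n \cdot |\mathcal{S}|)$), which is polynomial. Crucially, the time may be exponential but the space is reused: when we return from the recursive call on $\mathcal{S}_v$ we discard its entire subtree of computation, keeping only the one-bit answer. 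Therefore the whole procedure runs in $\mathrm{PSPACE}$, and by definition Toggle $\in \mathrm{PSPACE}$.

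The main obstacle — really the only subtle point — is making sure the recursion depth, not just the size of a single position, is polynomially bounded; an arbitrary impartial game on a graph could in principle have exponentially long plays, which would blow up the stack. Here this is resolved cleanly by the strict-decrease property of $\sigma^{(j)}(G)$ noted right after the definition of a legal move: a game of Toggle consists of at most $|V(G)|$ moves. A secondary bookkeeping point is to confirm that each child position $\mathcal{S}_v$ is computable from $\mathcal{S}$ in polynomial time and space (immediate: toggling $N[v]$ and recomputing the relevant $\sigma$ values touches only $O(n)$ vertices). With these two observations in hand the rest is the routine stack-based argument that bounded-depth, poly-size-per-node recursive evaluation lies in $\mathrm{PSPACE}$.
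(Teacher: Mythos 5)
Your proposal is correct and follows essentially the same route as the paper: a depth-first recursive evaluation of the game tree whose depth is bounded by $|V(G)|$ via the strict decrease of $\sigma^{(j)}(G)$, with polynomial space per stack frame reused across branches. The only cosmetic difference is that the paper's version indexes recursion by the sequence of moves $\alpha$ rather than by the current position, which does not change the space bound.
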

\begin{proof}
    Let $\mathcal{S}$ be an arbitrary position in a game of Toggle played on the graph $G$. Because the maximum weight of each vertex is one, we know that at each stage $j$ the total weight $\sigma^{(j)}(G)$ is at most $|V(G)|-j$. It follows that each game lasts for at most $|V(G)|$ moves.

    Starting from position $\mathcal{S}$, let $\mathcal{S}_\alpha$ be the position reached by playing any sequence $\alpha$ of legal moves. (Here we allow $\alpha$ to be the empty sequence.)
    
    Let $Succ(\alpha)$ denote the set of legal moves playable at position $\mathcal{S}_\alpha$, and let $\alpha m$ be the sequence of moves $\alpha$ followed by the move $m \in Succ(\alpha)$.
    Consider the following recursive algorithm: if $S_\alpha$ is a completed game, and thus $Succ(\alpha) = \emptyset$, then whichever player played last is the winner, denoted $Winner(\alpha)$. Otherwise, there are remaining legal moves (i.e.\ $Succ(\alpha)$ is not empty) and for all $m \in Succ(\alpha)$ there exists a $Winner(\alpha m)$ found by recursive computation. If $Winner(\alpha m) = \rho$ for every $m \in Succ(\alpha)$, then $Winner(\alpha) = \rho$. On the contrary, if there exists a strategy for move $m \in Succ(\alpha)$ such that $\rho \neq Winner(\alpha m)$, then $\rho \neq Winner(\alpha)$.

    Using the above algorithm, the space needed to store any legal sequence of moves is bounded above by the number of vertices in the graph. In addition, the space required to decide if a sequence represents a finished game is also bounded above by the number of vertices. The total computational space needed to determine $Winner(\emptyset)$ is the sum of these two values. Because this algorithm can be performed using an amount of space that is at most polynomial with respect to the length of input, we have that $\rm{Toggle} \in PSPACE$.
\end{proof}

An immediate consequence of Proposition \ref{prop:PSPACE} is the following.
\begin{corollary}
    The Nimber $\mathcal{G}(\mathcal{S})$ corresponding to position $\mathcal{S}$ of a Toggle game can be computed in polynomial space with respect to input.
\end{corollary}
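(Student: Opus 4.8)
The plan is to leverage Proposition~\ref{prop:PSPACE} together with the recursive definition of the Nimber via the $\mex$ rule. First I would recall that $\mathcal{G}(\mathcal{S}) = \mex\{\mathcal{G}(\mathcal{S}_1),\dots,\mathcal{G}(\mathcal{S}_n)\}$, where $\mathcal{S}_1,\dots,\mathcal{S}_n$ are the positions reachable from $\mathcal{S}$ in one legal Toggle move, and that the base case is a position with no legal move, whose Nimber is $0$. As noted in the proof of Proposition~\ref{prop:PSPACE}, any Toggle game played on $G$ terminates after at most $|V(G)|$ moves, so the tree of play from $\mathcal{S}$ has depth at most $|V(G)|$; this is the key structural fact I would use to bound the depth of the recursive computation.

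Next I would establish a polynomial bound on the magnitude of the Nimbers that can arise. Since a position has at most $|V(G)|$ playable vertices, it has at most $|V(G)|$ successor positions, so the set $\{\mathcal{G}(\mathcal{S}_1),\dots,\mathcal{G}(\mathcal{S}_n)\}$ has at most $|V(G)|$ elements; hence $\mathcal{G}(\mathcal{S})=\mex$ of this set is at most $|V(G)|$. (Alternatively, a short induction on remaining game length shows $\mathcal{G}(\mathcal{S})$ never exceeds the length of the longest play from $\mathcal{S}$, which is again at most $|V(G)|$.) Consequently every Nimber value encountered fits in $O(\log|V(G)|)$ bits, and the set of Nimbers attained by the successors of any position is a subset of $\{0,1,\dots,|V(G)|\}$.

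Then I would describe the depth-first algorithm explicitly: to compute $\mathcal{G}(\mathcal{S})$, maintain a stack of recursion frames, one per move played so far; each frame stores the current position (one bit per vertex of $G$, hence $O(|V(G)|)$ bits), a pointer into the list of playable vertices at that position, and a bitmask recording which values among $\{0,1,\dots,|V(G)|\}$ have been returned by the children already processed (another $O(|V(G)|)$ bits). When all children of a frame have been processed, the frame returns the $\mex$ of its recorded set and pops. Since the stack has depth at most $|V(G)|$ and each frame uses $O(|V(G)|)$ space, the total space is $O(|V(G)|^2)$, polynomial in the size of the encoding of $\mathcal{S}$. I would also remark that recomputing legal moves on demand — rather than storing the successor positions themselves — keeps the per-frame cost linear, and that this is the same branching recursion used in Proposition~\ref{prop:PSPACE} with the single winner-bit replaced by a $\mex$-accumulator of polynomial width.

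I do not expect a serious obstacle here: the result is genuinely an immediate consequence of Proposition~\ref{prop:PSPACE} once the two ingredients (bounded game length, polynomially bounded Nimber magnitude) are in place. The only point meriting a word of care is the claim that the Nimbers stay polynomially bounded, since this is what guarantees the accumulator bitmask has polynomial width; I would state and justify that bound before invoking it in the space accounting.
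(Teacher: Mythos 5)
Your proposal is correct and follows exactly the route the paper intends: the paper states the corollary as an immediate consequence of Proposition~\ref{prop:PSPACE} without further proof, and your argument is the natural elaboration, reusing the same depth-first recursion with the winner bit replaced by a $\mex$-accumulator. Your extra care in bounding the Nimber values by $|V(G)|$ (so the accumulator has polynomial width) is the one detail the paper leaves implicit, and you justify it correctly.
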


Our aim is to show that a known PSPACE-complete problem, specifically $3$-$QBF$, is polynomially equivalent to Toggle with respect to computational space complexity. Following Schaeffer \cite{TS78}, our method is to construct a general function that equates the satisfiability of a $3$-$QBF$ decision problem with the outcome of a corresponding Toggle game. We have already shown that $\rm{Toggle} \in \rm{PSPACE}$ in Proposition \ref{prop:PSPACE}. Thus by Proposition \ref{prop:log-complete}, it suffices to show that a given input of $3$-$QBF$ reduces to Toggle within logspace, i.e.\ $G_\omega(3\text{-}QBF)\leq_{log} G_\omega(\rm{Toggle})$. We show this by first proving that for every $3\text{-}QBF$ game there exists an instance of a Toggle game such that a winning strategy in one game is equivalent to a winning strategy in the other. This allows us to formally define the Toggle space complexity class in terms of the marginal size of a Toggle graph with respect to input.


Starting with a generalized instance of the $3$-$QBF$ decision problem with $m$ clauses and $n$ variables, we construct a logically equivalent Toggle game. Much of the remainder of this section is dedicated to developing such a Toggle instance and proving its logical equivalence to the underlying $3$-$QBF$ input.

We adopt the notation $\gamma^{\delta}_i$ for the labeling of vertices, where $\delta$ denotes subtype, $i$ denotes an individual identifier, and $\gamma \in \{d, c, \sigma, v, \chi, \lambda\}$ denotes the vertex type as defined below. See Figure \ref{fig:log-complete ref} as a helpful visual reference.

\begin{itemize}
    \item $d$ stands for dummy vertices. A dummy vertex is never playable and only serves to ensure whether or not a neighbor of that vertex becomes playable. Specifically, the dummy vertex of subtype $\delta$ indicates the neighborhood of that vertex.
    \item $c$ stands for controller vertices. Each controller vertex $c^1_i$ is never playable and serves to ensure that variable vertices are toggled in the correct order. Each controller vertex $c^2_i$ becomes playable only after all variables in clause $i$ have been assigned. Note that there is a unique controller vertex $c^2_i$ for each clause $i$, $1 \leq i \leq m$.

    \item $\sigma$ stands for signal vertices. Each signal vertex $\sigma^1_i$ is connected to a specific variable truth assignment. It is never playable and serves to reflect variable assignments. Each signal vertex $\sigma^2_i$ is connected to a clause vertex and signals to the next clause when the previous clause vertex has been toggled.
    \item $v$ stands for variable vertices. These are the only vertices at which there is an option to play. Toggling $v^0_i$ sets $\beta_i$ to False while toggling $v^1_i$ sets $\beta_i$ to True. Note that exactly one of $v_i^0$ and $v_i^1$ must be played at stage $i$. 
    \item $\chi$ stands for clause vertices. The playability of clause vertex $\chi_i$ is determined by whether the variable assignments imply that the clause $\chi_i$ returns a value of True. A clause vertex $\chi_i$ will be toggled if and only if at least one variable in the clause has been assigned the value True and $\chi_j$ has been toggled for all $j<i$. The $QBF$ formula $\varphi$ is satisfied (i.e.\ Player $1$ wins) if and only if all clause vertices have been toggled.

    \item $\lambda$ stands for link vertices. The link vertices separate the clause and variable vertices and are played after all variable values have been assigned (True or False) and before any clause vertices have been toggled. Note that there will be two link vertices if $n$ is even and three link vertices if $n$ is odd (cf.\ proof of Theorem \ref{thm:Toggle_Pspace}).
\end{itemize}

We are nearly prepared to provide a proof of our main result on complexity. Prior to this, it is convenient to lay the following framework.


Let $A=\big(\widehat \beta, \varphi\big)$ be a given input for a $3\text{-}QBF$ game. Without loss of generality, we may assume that
$$A=(\exists \beta_1)\;( \forall\beta_2) \; (\exists \beta_3) \; \dots \; (\exists\beta_n \text{ or }\forall\beta_n)(\chi_1 \wedge \chi_2 \wedge ... \wedge \chi_m)$$
where $\chi_i$ is a $CNF$ clause with three variables.



Let $\{G,V_0^{(0)}\}$ be the Toggle position associated with $A$. Let $Tree_V(n)=\emptyset$ if $n$ is even and $Tree_V(n)=\{\lambda_3,d_1^9,d_2^9,d_1^{10},d_2^{10},d_3^{10},d_4^{10}\}$ if $n$ is odd. We define the vertex set of $G$ as follows:
\begin{align*}
    V(G) = &\left\{v^0_j, v^1_j, c^1_j, d^5_j \mid 1\leq j\leq n\right\} \cup \left\{ \chi_i, c^2_i, d^6_i, d^7_i, d^{13}_i \mid 1 \leq i \leq m \right\} \\&\cup\left\{d^4_{i},d^{14}_i \mid 1\leq i \leq 2m\right\}  \cup \left\{ d^3_{i}, \sigma^1_{i}, \sigma^2_{i}  \mid 1 \leq i \leq 3m \right\} \cup \left\{d^2_{i} \mid 1 \leq i \leq 6m\right\} \\ &\cup \left\{d^8_{i}\mid 1\leq i \leq 4m \right\} \cup \left\{d^{11}_1\right\} \cup \left\{d^{12}_1,d^{12}_2\right\} \cup \left\{d^1_1,d^1_2,d^1_3\right\} \\&\cup \{\lambda_1,\lambda_2\} \cup Tree_V(n) \cup \{Endgame\}.
\end{align*} For $n$ even, the assignment on $G$ is given by \vspace{-2mm} $$V_{1}^{(0)}(G) = \{v_1^i,v_2^i,d_i^3 \mid 1 \leq i \leq n\} \cup \{\chi_j \mid 1 \leq j \leq m\} \cup \{\lambda_2, d_1^{11}, c_1^1\} \cup \{d_j^8 \mid 1 \leq j \leq 4m\},$$
\vspace{-7mm} 

\noindent whereas for $n$ odd, $V_{1}^{(0)}(G)$ additionally includes $\{d_1^9,d_2^9\}$.

Before defining the edge set $E(G)$, some additional definitions are in order.

Let $\c{C}^0$ denote the set of ordered pairs $(\beta_i, \chi_j)$ such that $\beta_i$ appears in a negated form, $\neg\beta_i$, in $\chi_j$ (i.e.\ $\neg \beta_i \implies \chi_j$). Similarly, let $\c{C}^1$ denote the set of ordered pairs $(\beta_i, \chi_j)$ such that the non-negated form of $\beta_i$ appears in $\chi_j$ (i.e.\ $ \beta_i \implies \chi_j$). Finally, let $\c{C}^* = \c{C}^0 \cup \c{C}^1$. We also define
$$\c{C}^0(z) = \left\{ (\beta_i, \chi_j) \mid \neg \beta_i \implies \chi_j \textbf{ and } i<z\right\} $$
$$\c{C}^1(z) = \left\{ (\beta_i, \chi_j) \mid \beta_i \implies \chi_j \textbf{ and } i<z\right\} $$ 
$$\c{C}^{*}(z) = \c{C}^0(z) \cup \c{C}^1(z)$$

Note that multiplicities are preserved, i.e.\ if some $\beta_i$ appears in a clause $\chi_j$ three times then $(\beta_i, \chi_j)$ is in $\c{C}^{*}$ three times. Thus, as an immediate consequence, $|\c{C}^*|=3m$. 

 For each $j$, we order $i_1 \leq i_2 \leq i_3$ such that $(\beta_{i_k},\chi_j) \in \c{C}^*$ for $k=1,2,3$. Then

$$y_1: \c{C}^* \to \left\{1, 2, \dots, 3m \right\}$$ 
$$y_1(\beta_{i_1}, \chi_j) = 3j-2, \;y_1(\beta_{i_2}, \chi_j) = 3j-1, \;y_1(\beta_{i_3}, \chi_j) = 3j$$
$$y_2: \widehat \beta \to A\subseteq \c{C}^*$$ 
$$y_2(\beta_i) = \left\{ (\beta_k, \chi_{j}) \in \c{C}^* \mid k=i \right\}.$$
Observe that $y_1$ is a bijection. Further, $y_2$ partitions $\c{C}^*$, as indicated below:
$$\bigcup_{\ell = 1}^n y_2(\beta_\ell) = \c{C}^* ~\text{\;  and  \;}~ y_2(\beta_\ell) \cap y_2(\beta_k) = \emptyset ~~~ \forall \ell \neq k.$$

We are now prepared to define the edge set

$$E(G) = \textcolor{red}{R(G)} \cup \textcolor{blue}{B(G)} \cup \textcolor{violet}{P(G)}.$$
The use of colors in the above notation is a device to help the reader to better interpret Figures \ref{fig:log-complete ref}-\ref{fig:PSpaceMargCla}. Here $\textcolor{red}{R(G)}, \textcolor{blue}{B(G)}, \textcolor{violet}{P(G)}$ are defined as follows:

\begin{align*}
    \textcolor{red}{R(G)} = &\left\{(v_n^0,\lambda_1), (v_n^1, \lambda_1), (\lambda_1, \lambda_2), (\lambda_2, d_1^{11}), (d_1^{11}, d_1^{12}), (d_1^{11}, d_2^{12})\right\}  \\ & \cup Tree_E(n) \\ & \cup \left \{(c^2_i, d_{4i-3}^8),(c^2_i, d_{4i-2}^8),(c^2_i, d_{4i-1}^8),(c^2_i, d_{4i}^8) \mid 1\leq i \leq m\right\} \\ & \cup \left \{(d^4_{2i-1}, d_{4i-3}^8),(d^4_{2i-1}, d_{4i-2}^8),(d^4_{2i-1}, d_{4i-1}^8),(d^4_{2i-1}, d_{4i}^8) \mid 1\leq i \leq m\right\} \\ & \cup \left \{(d^4_{2i}, d_{4i-3}^8),(d^4_{2i}, d_{4i-2}^8),(d^4_{2i}, d_{4i-1}^8),(d^4_{2i}, d_{4i}^8) \mid 1\leq i \leq m\right\}  \\ & \cup \left \{(c^2_i, \sigma^2_{3i-2}),(c^2_j, \sigma^2_{3i-1}),(c^2_i, \sigma^2_{3i}) \mid 1 \leq i \leq m\right\} \\ & \cup \left\{(d^6_i, \sigma^2_{3i - 2}), (d^6_i, \sigma^2_{3i-1}),(d^6_i, \sigma^2_{3i}) \mid 1\leq i\leq m\right\} \\ & \cup \left\{(d^7_i, \sigma^1_{3i-2}),(d^7_i, \sigma^1_{3i-1}),(d^7_i, \sigma^1_{3i}) \mid 1\leq i\leq m\right\}\\& \cup \left\{(\chi_i, c^2_{i+1}) \mid 1\leq i\leq m-1\right\}\\& \cup \left\{(\chi_i, \sigma^1_{3i-2}), (\chi_i, \sigma^1_{3i-1}), (\chi_i, \sigma^1_{3i}) \mid 1\leq i\leq m \right\} \\ &  \cup \left\{(\chi_i, \sigma^2_{3i-2}), (\chi_i, \sigma^2_{3i-1}), (\chi_i, \sigma^2_{3i}) \mid 1\leq i\leq m \right\} \\ &  \cup \left\{(\chi_i, d^{13}_i) \mid 1\leq i\leq m\right\} \cup \left\{(d^{13}_i, d^{14}_{2i-1}), (d^{13}_i, d^{14}_{2i}) \mid 1\leq i\leq m\right\}\\ & \cup \left\{(\chi_m, EndGame)\right\}
    \\ \\
    \textcolor{blue}{B(G)} = &\left\{(c_1^1,d_1^1),(c_1^1,d_2^1),(c_1^1,d_3^1) \right\} \\ & \cup \left\{(c_j^1,v_j^0),(c_j^1,v_j^1),(c_j^1,d_{j}^5), (v_j^0, v_j^1) \mid 1 \leq j \leq n\right\} \\ &\cup \left\{(v_j^0, c_{j+1}^1),(v_j^1, c_{j+1}^1) \mid 1 \leq j \leq n-1\right\}  \\ & \cup \left\{ (v_j^0, d^3_{\left| \c{C}^{*}(j)\right| + i}) \Big\vert 1\leq i \leq \left| \c{C}^0(j+1)\right| - \left| \c{C}^0(j)\right|\right\}  \\ & \cup \left\{ (v_j^1, d^3_{\left| \c{C}^{*}(j)\right| +\left| \c{C}^0(j+1)\right| - \left| \c{C}^0(j)\right| + i}) \Big\vert 1\leq i \leq \left| \c{C}^1(j+1)\right| - \left| \c{C}^1(j)\right|\right\} \\ & \cup \left\{(d_i^3,d_{2i - 1}^2), (d_i^3,d_{2i}^2) \mid 1 \leq i \leq 3m\right\}
    \\ \\
    \textcolor{violet}{P(G)} = &\left\{(v_j^0, \sigma^1_{y_1(k)}) \mid k\in y_2(\beta_j)\cap\c{C}^0 \right\} \cup \left\{(v_j^1, \sigma^1_{y_1(k)}) \mid k\in y_2(\beta_j)\cap \c{C}^1 \right\}
\end{align*} In the above, $Tree_E(n) = \{(\lambda_2,c_1^2)\}$ if $n$ is even, whereas if $n$ is odd we have $Tree_E(n) = \{(\lambda_2, \lambda_3), (\lambda_3, d_1^{9}), (\lambda_3, d_2^{9}), (d_1^{9}, d_1^{10}), (d_1^{9}, d_2^{10}), (d_2^{9}, d_3^{10}), (d_2^{9}, d_4^{10}), (\lambda_3, c_1^2)\}$.



Figure \ref{fig:log-complete ref} shows a simple example of the Toggle position associated with the $3$-$QBF$ input $A=\big(\widehat \beta, \varphi\big)$, where $\widehat \beta=\{\beta_1, \beta_2, \beta_3 \}$ and $\varphi = \beta_1 \vee \beta_2 \vee \beta_3$. Figure $\ref{fig:qbfTogExam}$ illustrates a more robust example of the Toggle position associated with input $A = \big( \widehat \beta,\varphi\big)$ where $\widehat \beta = \{\beta_1, \beta_2, \beta_3,\beta_4,\beta_5,\beta_6\}$ and $\varphi = (\beta_1 \vee \neg\beta_2 \vee \beta_3) \wedge (\neg \beta_1 \vee \beta_4 \vee \beta_5)\wedge (\beta_1 \vee \neg\beta_5 \vee \beta_6) \wedge (\neg\beta_3 \vee \beta_6 \vee \neg\beta_6)$. This visual is especially helpful for understanding how the game extends and, along with Figure \ref{fig:log-complete ref}, for contrasting the cases where the number of variables is odd or even.

\begin{lemma}
    For each instance of a $3$-$QBF$ game $G_\omega(3\text{-}QBF)$, there exists a Toggle game logically equivalent to $G_\omega(3\text{-}QBF)$, i.e.\ a winning strategy in the Toggle game implies a corresponding winning strategy in $G_\omega(3\text{-}QBF)$ and vice versa.
\end{lemma}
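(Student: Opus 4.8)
The plan is to show that the explicit construction of $\{G,V_0^{(0)}\}$ forces every legal play of Toggle on $G$ to pass through three rigidly ordered phases that reproduce the structure of $A$: an \emph{assignment phase} of $n$ moves, a short \emph{link phase}, and a \emph{clause-checking phase} of $m$ moves concluding with one move at the terminal vertex $EndGame$. For each phase I will maintain an invariant recording which vertices currently have weight $1$ and which of those are playable, and I will verify the playability claims by comparing the current value of $\sigma(v)$ with $|N[v]|$ using the edge lists $R(G)$, $B(G)$, $P(G)$; recall that a weight-$1$ vertex $v$ is playable iff strictly more than half of $N[v]$ has weight $1$, so the dummy vertices $d^\delta_i$ are there precisely to tune these thresholds.

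For the assignment phase I will prove by induction on $j$ that immediately before the $j^{\text{th}}$ move the only playable vertices are $v_j^0$ and $v_j^1$. The base case uses that $c_1^1$ starts with weight $1$ but its dummy neighbours $d_1^1,d_2^1,d_3^1$ (and the $d^5$ vertices) keep its $\sigma$-value at or below half of $|N[c_1^1]|$, hence $c_1^1$ is unplayable, while $\sigma(v_1^0)=\sigma(v_1^1)$ just exceeds the playability threshold. The inductive step checks that playing one of $v_{j-1}^0,v_{j-1}^1$ flips $c_j^1$ to weight $1$, pushes $v_j^0$ and $v_j^1$ over threshold, drives the just-played vertex and its sibling permanently below threshold (the edge $(v_{j-1}^0,v_{j-1}^1)$ forces mutual exclusion, and an argument in the spirit of Lemma~\ref{lemma:unplayable_GPn1} gives terminal unplayability), and toggles, together with their $d^3$ partners, exactly the signal vertices $\sigma^1_{y_1(k)}$ with $k\in y_2(\beta_{j-1})\cap\c{C}^0$ if $v_{j-1}^0$ was played, or with $k\in y_2(\beta_{j-1})\cap\c{C}^1$ if $v_{j-1}^1$ was played, thereby recording the truth value chosen for $\beta_{j-1}$.

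For the link phase I will verify that after the $n^{\text{th}}$ move the forced move is $\lambda_2$ when $n$ is even, and $\lambda_2$ followed by $\lambda_3$ when $n$ is odd; the role of $Tree_V(n)$ and $Tree_E(n)$ is to insert precisely the neutral moves needed so that the parity of the total move count is preserved and $c_1^2$ remains unplayable until the link phase ends, after which the link move(s) flip $c_1^2$ to weight $1$ and its $d^8$ padding makes it the unique next playable vertex. For the clause-checking phase I will prove by induction on $i$ that $\chi_i$ becomes playable exactly when $c_i^2$ has been toggled --- which, through the edge $(\chi_{i-1},c_i^2)$ and the link phase for $i=1$, forces all earlier clause vertices to be already toggled --- \emph{and} at least one literal of $\chi_i$ was assigned True, the latter being detected through the wiring among $\sigma^1$, $d^7$, $\sigma^2$, $d^6$ so that $\sigma(\chi_i)$ crosses $|N[\chi_i]|/2$ precisely in that case; moreover each $c_i^2$, and then $\chi_i$, is the unique playable vertex at its stage, so the play is forced. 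Consequently the Toggle game reaches $EndGame$ iff all of $\chi_1,\dots,\chi_m$ are toggled, i.e.\ iff $\varphi$ evaluates to True under the chosen assignment. Counting moves --- $n$ assignment moves, one or two link moves matched to parity, $m$ clause moves, and the final move --- and aligning this with the alternation of players identifies the Toggle winner with the $3$-$QBF$ winner (with $\exists$ corresponding to Player~$1$ and $\forall$ to Player~$2$), in both directions, which is the claimed logical equivalence.

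The main obstacle is the ``no spurious moves'' obligation that underlies every step above: at each stage I must show that \emph{no} vertex other than the intended one is playable, and moreover --- exactly as in Lemmas~\ref{lemma:unplayable_GPn1}, \ref{P01(m,2)}, and \ref{P10(m,2)} --- that a vertex which becomes unplayable is \emph{terminally} unplayable, so that a later toggle cannot reactivate a dummy-padded gadget and break the phase structure. Discharging this requires a uniform case analysis over the vertex types in $\{d,c,\sigma,v,\chi,\lambda\}$ and their subtypes, computing $|N[v]|$ and the maximum attainable value of $\sigma(v)$ for each from the edge lists, together with careful handling of the $n$ odd versus $n$ even split in $Tree_V(n)$ and $Tree_E(n)$. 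Once this bookkeeping is complete, the three-phase analysis and the parity count give the equivalence of winning strategies.
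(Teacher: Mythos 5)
Your proposal follows essentially the same route as the paper's proof: a forced three-phase walkthrough (variable assignment, link vertices, clause certification) showing that the unique available moves simulate the $3$-$QBF$ play and that Player~1 wins iff every $\chi_i$ is toggled iff $\varphi$ is satisfied. Your treatment is if anything more careful than the paper's, since you explicitly flag the ``no spurious moves''/terminal-unplayability obligations that the paper discharges only implicitly via the worked $\sigma$-value tables.
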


\begin{proof}
    
Let $A=\big(\widehat \beta,\varphi\big)$ be the input for an instance of $G_\omega(3\text{-}QBF)$, and let $\{G,V_0^{(0)}\}$ be the Toggle position corresponding to input $A$. The Toggle game begins by playing first on the variable vertices. 
At the outset, only two possible Toggle moves are available to Player $1$, i.e.\ vertices $v_1^0$ and $v_1^1$. If Player $1$ plays on $v_1^0$, the variable $\beta_1$ is assigned as False. Similarly, if Player $1$ plays on $v_1^1$, then $\beta_1$ is assigned True. This means the only moves available to Player $2$ are $v_2^0$ and $v_2^1$, and a truth value for $\beta_{2}$ is assigned in the same manner (see Figure \ref{thisisalabel}). The players alternate assigning variables until each of $\beta_1,\dots,\beta_n$ has been assigned a truth value. 
In accordance with Figures \ref{fig:log-complete ref}-\ref{thisisalabel}, the blue section (vertices and edges) will no longer be played. The balance of the game will be played on the red section.

After all variables have been assigned truth values, only one move is available to the next player. If $n$ is even, then Player $1$ must play on the link vertex $\lambda_2$. If $n$ is odd, then Player $2$ must play on $\lambda_2$ and Player $1$ must follow by playing on $\lambda_3$. This is a consequence of the fact that Player 2 is required to play on the control vertices $c_i^2$ and Player $1$ is required to play on the clause vertices $\chi_i$. 

The Toggle game now enters its second phase,
wherein each player will have at most one playable vertex per turn until the game concludes. After all link vertices $\lambda_i$ have been played, Player $2$ must play on $c_1^2$. At this point, if at least one of the signal vertices $\sigma^1_{1}$, $\sigma^1_{2}$, $\sigma^1_{3}$ has weight $1$, then $\chi_1$ is playable. Notice that this occurs only if either at least one variable $\beta_i$ with $(\beta_i,\chi_1) \in \c{C}^0$ has been assigned False or at least one variable $\beta_i$ with $(\beta_i,\chi_1) \in \c{C}^1$ has been assigned True. Thus vertex $\chi_1$ becomes playable if and only if clause $\chi_1$ is True. In this case Player $1$ must now play on $\chi_1$, rendering $c_2^2$ playable (see Figure \ref{ClauseSatCertValueTable}). This process repeats until the game ends, either when some $\chi_i$ is not playable, in which case Player $2$ wins, or all $\chi_i$ and $c_i^2$ vertices have been played, in which case Player 1 wins. Note that Player $1$ wins the Toggle game if and only if all clauses $\chi_i$ are assigned the value True, which makes this game logically equivalent to the $3$-$QBF$ game, i.e.\ a winning strategy in this Toggle game implies a winning strategy in the $3$-$QBF$ game, and conversely.
\end{proof}




\begin{theorem}\label{thm:Toggle_Pspace}
    There exists a logspace reduction from the PSPACE-complete problem $3$-$QBF$ to Toggle. Thus the game of Toggle is PSPACE-complete.
\end{theorem}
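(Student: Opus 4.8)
The construction needed for the reduction is already in hand: the Toggle position $\{G,V_0^{(0)}\}$ attached to a $3\text{-}QBF$ input $A=(\widehat\beta,\varphi)$ has been written out explicitly above, and the preceding lemma shows that Player~1 has a winning strategy in the Toggle game on $\{G,V_0^{(0)}\}$ if and only if Player~1 has a winning strategy in $G_\omega(3\text{-}QBF)$ on $A$. Since $G_\omega(3\text{-}QBF)$ is PSPACE-complete (Lemma~\ref{lem:3QBF} and Schaefer's extension to the game version) and Toggle lies in PSPACE (Proposition~\ref{prop:PSPACE}), Proposition~\ref{prop:log-complete} reduces the theorem to a single remaining point: the map $A\mapsto\{G,V_0^{(0)}\}$ must be a logspace transformation, so that $G_\omega(3\text{-}QBF)\leq_{log}G_\omega(\text{Toggle})$ and hence, using that logspace reductions compose, Toggle is log-complete in PSPACE. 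The plan is therefore to verify two things: that the encoded output is polynomially bounded in $|A|$, and that each of its symbols can be produced by a deterministic machine using only $O(\log|A|)$ work space.

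For the size bound and for most of the space bound I would read directly off the displayed definitions of $V(G)$, $V_0^{(0)}(G)$, and $E(G)$. Every vertex family listed there is indexed by a range whose endpoint is one of $n,2m,3m,4m,6m$, together with a constant number of exceptional vertices (the link vertices, the three $d^1$ vertices, the $Tree_V(n)$ block, the endgame vertex, and so on); hence $|V(G)|=O(m+n)$, every vertex label is an index at most $6m$ and so fits in $O(\log(m+n))$ bits, and the identical count gives $|E(G)|=O(m+n)$. Thus the Toggle instance has size linear in $|A|$, so there is no obstruction beyond emitting each symbol in logspace. To do that, the machine works through the listed families one at a time, maintaining a single counter of $O(\log(m+n))$ bits; the arithmetic appearing in the edge formulas (multiplication by the fixed constants $2,3,4$ and the shifts such as $4i-3$ or $3j-2$) is computable in logarithmic space, and the parity test on $n$ that selects $Tree_V(n)$ and $Tree_E(n)$ is immediate.

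The only genuinely delicate ingredients are the clause-literal incidence sets $\mathcal{C}^0,\mathcal{C}^1,\mathcal{C}^*$, their truncations $\mathcal{C}^0(z),\mathcal{C}^1(z),\mathcal{C}^*(z)$, the difference cardinalities $|\mathcal{C}^0(j+1)|-|\mathcal{C}^0(j)|$ and $|\mathcal{C}^1(j+1)|-|\mathcal{C}^1(j)|$ that occur as index bounds inside $B(G)$, and the maps $y_1,y_2$ used to define $P(G)$. Membership of an ordered pair $(\beta_i,\chi_j)$ in $\mathcal{C}^0$ or $\mathcal{C}^1$ is a purely local syntactic test on clause $\chi_j$ of $\varphi$, and each of the cardinalities above is obtained by sweeping once through $\varphi$ while keeping a fixed number of counters, each of which is bounded by $|\mathcal{C}^*|=3m$ and therefore stored in $O(\log m)$ space; since $y_1$ is the bijection onto $\{1,\dots,3m\}$ induced by the fixed ordering of pairs within each clause and $y_2$ is the partition by first coordinate, both are logspace-computable as well. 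Combining these with the routine portion from the previous paragraph shows that $A\mapsto\{G,V_0^{(0)}\}$ is logspace, which together with Proposition~\ref{prop:PSPACE} and Proposition~\ref{prop:log-complete} yields that Toggle is PSPACE-complete. I expect the main obstacle to be exactly this bookkeeping: one has to make sure the index bounds occurring inside $B(G)$ and $P(G)$ are recomputed from $\varphi$ on demand rather than tabulated, so that the working space never exceeds a fixed number of $O(\log|A|)$-bit counters, and one has to confirm that literals repeated within a clause are counted with multiplicity, consistently with the stated fact $|\mathcal{C}^*|=3m$. Everything else is mechanical once the Toggle instance is seen to be linear in the input.
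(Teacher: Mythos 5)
Your proposal is correct and reaches the same conclusion, but it argues the decisive point differently from the paper, and in fact more completely. The paper's proof establishes only a size bound, and does so incrementally: it writes out the vertex and edge sets of the instance obtained by adjoining one variable $\beta_{n+1}$ and of the instance obtained by adjoining one clause, observes that each such extension adds a bounded number of vertices and edges (a net gain of $28$ vertices and $43$ edges per clause), and concludes that the Toggle graph has size proportional to $|A|$, whence PSPACE-completeness. You instead read the linear bound $|V(G)|,|E(G)|=O(m+n)$ directly off the global definitions of $V(G)$ and $E(G)$, and then --- this is the part the paper omits --- verify the actual defining condition of a logspace transformation: that a deterministic machine can emit every symbol of the output using $O(\log|A|)$ work space, by recomputing on demand the incidence sets $\mathcal{C}^0,\mathcal{C}^1,\mathcal{C}^*$, the truncated cardinalities that appear as index bounds inside $B(G)$, and the maps $y_1,y_2$, each with a fixed number of counters bounded by $3m$. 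The paper's marginal argument buys a vivid picture of how the gadget grows (matching Figures \ref{fig:PSpaceMargVar} and \ref{fig:PSpaceMargCla}), but bounding output size alone does not certify that the reduction itself lies in logspace; your explicit workspace accounting closes that gap, and your caveat about counting repeated literals with multiplicity (so that $|\mathcal{C}^*|=3m$ and the index bounds in $B(G)$ and $P(G)$ stay consistent) is exactly the bookkeeping one must get right.
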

\begin{proof}    
We now determine the space complexity of Toggle relative to that of $3$-$QBF$.

Let $A=\big(\widehat \beta,\varphi\big)$ and $A'=\big(\widehat \beta \cup \{\beta_{n+1}\},\varphi\big)$ be inputs for instances of a $3$-$QBF$ game. Let $G$ and $G^{\,\prime}$ be the graphs of the Toggle games associated with $A$ and $A'$, respectively. Define $Tree_V(n)=\emptyset$ if $n$ is even and $Tree_V(n)=\{\lambda_3,d_1^9,d_2^9,d_1^{10},d_2^{10},d_3^{10},d_4^{10}\}$ if $n$ is odd. Then the vertex set and edge set of $G^{\,\prime}$ are given as follows:\begin{align*}V(G^{\,\prime}) = &\big(V(G) \setminus Tree_V(n)\big) \cup Tree_V(n+1) \cup \left\{c^1_{n+1},d^5_{n+1}, v^0_{n+1}, v^1_{n+1}\right\}
    \\E(G^{\,\prime}) = & \big(E(G) \setminus \big(Tree_E(n) \cup \left\{(v_n^0,\lambda_1), (v_n^1, \lambda_1)\right\}\big)\big) \cup Tree_E(n+1) \\ &\cup \{(v_{n}^0,c_{n+1}^1),(v_{n}^1,c_{n+1}^1),(v_{n+1}^0, \lambda_1), (v_{n+1}^1, \lambda_1)\} \\& \cup \left\{(c_{n+1}^1,v_{n+1}^0),(c_{n+1}^1,v_{n+1}^1),(c_{n+1}^1,d_{n+1}^5), (v_{n+1}^0, v_{n+1}^1)\right\}
\end{align*}

This indicates that the marginal space complexity for an additional $CNF$ variable $\beta_{n+1}$ is bounded above by a constant, i.e.\ not a function of $m$ nor $n$, see Figure \ref{fig:PSpaceMargVar}.

Now let $A=\big(\widehat \beta,\varphi\big)$ be the base instance of a $3$-$QBF$ game and consider the marginal space complexity of adding a clause $\chi_{n+1}=\beta_{k_1} \vee \beta_{k_2} \vee \beta_{k_3}$ to $A$, where $k_1, k_2, k_3 \in \{1, ...,m\}$. Then $A''=\big(\widehat \beta, \varphi \wedge \chi_{n+1}\big)$ is a valid instance of a $3$-$QBF$ game. Let $G$ and $G^{\,\prime\prime}$ be the graphs of the Toggle games associated with $A$ and $A''$, respectively. Then the vertex set and edge set of $G^{\,\prime\prime}$ are given as follows:\begin{align*}
     V(G^{\,\prime\prime}) = & V(G) \cup \left\{d^3_i, \sigma^1_i, \sigma^2_i \mid 3m+1 \leq i \leq 3m + 3\right\}\\ & \cup \left\{c^2_{m+1}, \chi_{m+1}, d^6_{m+1}, d^{13}_{m+1}, d^7_{m+1}\right\} \\& \cup \left\{d^8_1 \mid 4m + 1 \leq i \leq 4m + 4\right\}\\ & \cup \left\{d^4_i, d^{14}_i \mid 2m + 1 \leq i \leq 2m + 2\right\} \\& \cup \left\{d^2_i \mid 6m + 1 \leq i \leq 6m + 6\right \}
     \\ \\ 
     E(G^{\,\prime\prime}) = & (E(G) \setminus \{(\chi_m, EndGame), (\chi_m, c^2_{m+1})\})\\ & \cup  \left\{ (v_{k_i}^1, d^3_{3m + i}), (v_{k_i}^1, \sigma_{3m + i}^1), (d_{3m+i}^3,d_{3m + 2i - 1}^2), (d_{3m + i}^3,d_{3m + 2i}^2) \mid 1\leq i \leq 3\right\} \\ & \cup \left\{(c^2_{m+1}, d_{4m+1}^8),(c^2_{m+1}, d_{4m+2}^8),(c^2_{m+1}, d_{4m+3}^8),(c^2_{m+1}, d_{4m+4}^8) \right\} \\ & \cup \left\{(d^4_{2m+1}, d_{4m+1}^8),(d^4_{2m+1}, d_{4m+2}^8),(d^4_{2m+1}, d_{4m+3}^8),(d^4_{2m+1}, d_{4m+4}^8)\right\} \\ & \cup \left\{(d^4_{2m+2}, d_{4m+1}^8),(d^4_{2m+2}, d_{4m+2}^8),(d^4_{2m+2}, d_{4m+3}^8),(d^4_{2m+2}, d_{4m+4}^8)\right\}  \\ & \cup \left \{(c^2_{m+1}, \sigma^2_{3m+1}),(c^2_{m+1}, \sigma^2_{3m+2}), (c^2_{m+1}, \sigma^2_{3m+3})\right\} \\&\cup \left\{(d^6_{m+1}, \sigma^2_{3m + 1}), (d^6_{m+1}, \sigma^2_{3m + 2}), (d^6_{m+1}, \sigma^2_{3m + 3})\right\} \\  & \cup \left\{(d^7_{m+1}, \sigma^1_{3m + 1}), (d^7_{m+1}, \sigma^1_{3m + 2}), (d^7_{m+1}, \sigma^1_{3m + 3})\right\}  \\ &  \cup \left\{(\chi_{m+1}, \sigma^1_{3m+1}), (\chi_{m+1}, \sigma^1_{3m+2}), (\chi_{m+1}, \sigma^1_{3m+2}) \right\}\\ &  \cup \left\{(\chi_{m+1}, \sigma^2_{3m+1}), (\chi_{m+1}, \sigma^2_{3m+2}), (\chi_{m+1}, \sigma^2_{3m+2}) \right\} \\ &  \cup \left\{(\chi_{m+1}, d^{13}_{m+1})\right\} \cup \left\{(d^{13}_{m+1}, d^{14}_{2m+1}), (d^{13}_{m+1}, d^{14}_{2m+2})\right\}\\ & \cup \left\{(\chi_{m+1}, EndGame)\right\}
\end{align*}

The above, along with the visual illustration in Figure \ref{fig:PSpaceMargCla}, demonstrates that the marginal space complexity for an additional $CNF$ clause $\chi_{n+1}$ is a uniform constant independent of $m$ and $n$. Formally, the inclusion of an additional clause increases the size of the associated Toggle graph by a net gain of 28 vertices and 43 edges.

Given a $3$-$QBF$ instance, the marginal space complexity in Toggle of including another variable or clause is bounded above by a constant, as previously shown. Thus, if we assume that the spatial complexity of $3$-$QBF$ is $f(|A|)$, where $|A|$ is an input length indicator, then the spatial complexity of the Toggle graph is $c_1\cdot f(|A|)$. Therefore, since the latter spatial complexity is proportional to that of $3$-$QBF$ it follows that Toggle is PSPACE-complete.
\end{proof}

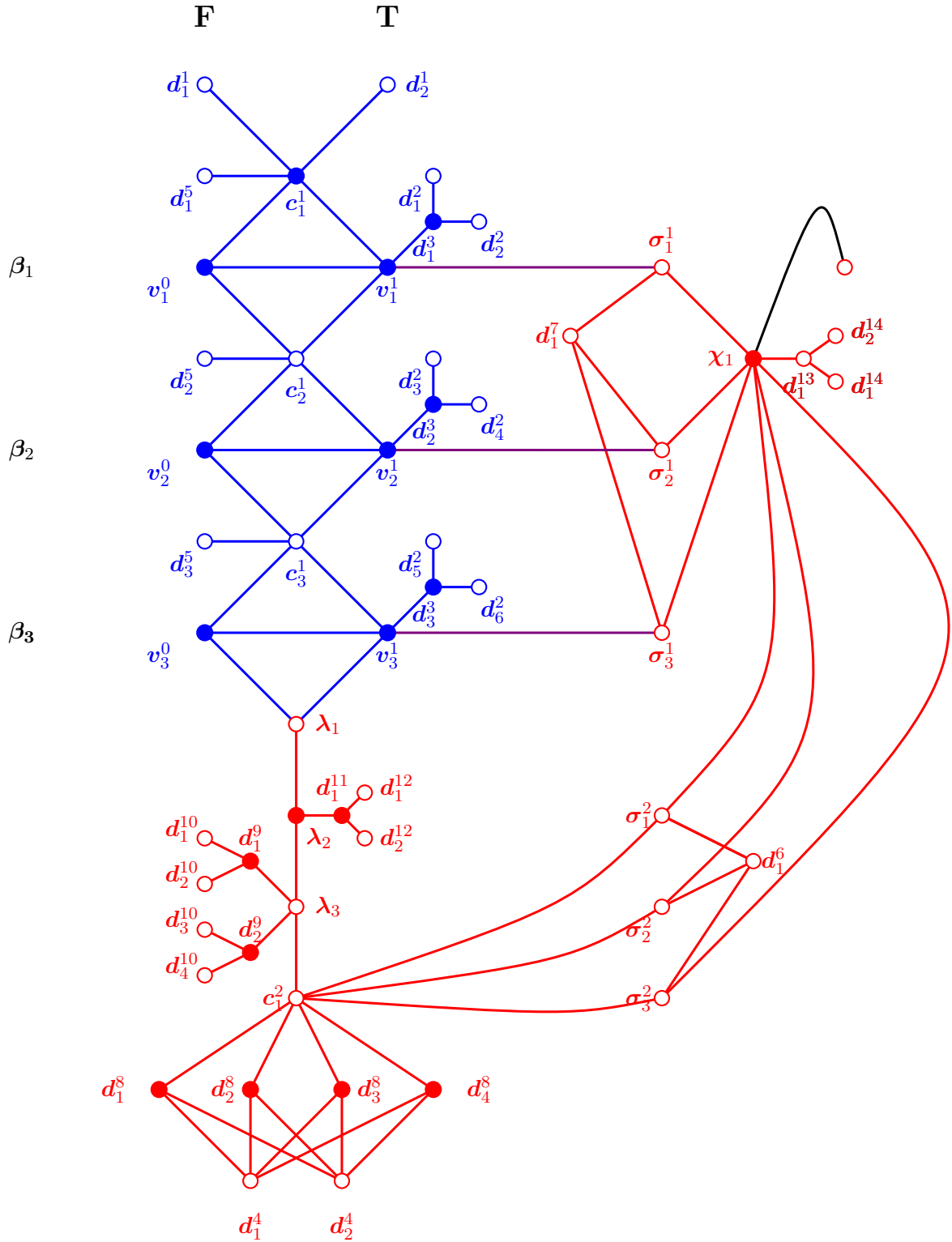
\begin{figure}[H]
        \begin{center}
            
        \begin{tikzpicture}[xscale=-1, rotate=270, scale = 1.5, font=\small, line width = 0.4mm]
            \coordinate(a1) at (-3,3);
            \coordinate(a2) at (-1,3);
            \coordinate(a3) at (0,3);
            \coordinate(a4) at (1,3);
            \coordinate(a5) at (2,3);
            \coordinate(a6) at (3,3);
            \coordinate(b1) at (-2,3);
            \coordinate(b2) at (-2,2);
            \coordinate(b3) at (0,2);
            \coordinate(b4) at (2,2);
            \coordinate(b5) at (7,2);
            \coordinate(c1) at (-3,1);
            \coordinate(c2) at (-1,1);
            \coordinate(c3) at (1,1);
            \coordinate(c4) at (3,1);
            \coordinate(d2) at (-1,-2);
            \coordinate(d3) at (1,-2);
            \coordinate(d4) at (3,-2);
            \coordinate(d6) at (5,-2);
            \coordinate(d7) at (6,-2);
            \coordinate(d8) at (7,-2);
            \coordinate(e3) at (0,-3);
            \coordinate(f2) at (-1,-4);
            \coordinate(g1) at (-2,0.5);
            \coordinate(g2) at (-1.5,0);

            \coordinate(g3) at (0,0.5);
            \coordinate(g4) at (0.5,0);

            \coordinate(g5) at (2,0.5);
            \coordinate(g6) at (2.5,0);

            \coordinate(n1) at (5,2);
            \coordinate(s14) at (5,1.5);
            \coordinate(s15) at (4.75, 1.25);
            \coordinate(s16) at (5.25, 1.25);
            \coordinate(n2) at (6,2);
            \coordinate(n3) at (4,2);
            \coordinate(s8) at (5.5,2.5);
            \coordinate(s9) at (6.5,2.5);
            
            \coordinate(s10) at (5.25,3);
            \coordinate(s11) at (5.75,3);
            \coordinate(s12) at (6.25,3);
            \coordinate(s13) at (6.75,3);

            \coordinate(w1) at (8, 3.5);
            \coordinate(w2) at (8, 2.5);
            \coordinate(w3) at (8, 1.5);
            \coordinate(w4) at (8, 0.5);
            \coordinate(s1) at (9, 2.5);
            \coordinate(s2) at (9, 1.5);
            \coordinate(s3) at (5.5,-3);
            \coordinate(s4) at (-1.5,0.5);
            \coordinate(s5) at (0.5,0.5);
            \coordinate(s6) at (2.5,0.5);
            \coordinate(s7) at (-0.25, -1);

            \coordinate(s17) at (0,-3.55);
            \coordinate(s18) at (0.25,-3.9);
            \coordinate(s19) at (-0.25,-3.9);
            \node at (0.3,-3.5){${\bm d^{13}_1}$};
            \node at (0.3,-4.25){${\bm d^{14}_1}$};
            \node at (-0.3,-4.25){${\bm d^{14}_2}$};

            \draw[red](d8) .. controls (3,-6) ..(e3);
            \draw[red](b5) .. controls (7.2,-1) ..(d8);
            \draw[red](d8)--(s3);
            
            \draw[red](d2)--(s7)--(d3);
            \draw[red](d4)--(s7);

            \draw[blue](a1)--(b2)--(c1);
            \draw[blue](b1)--(b2)--(a2)--(b3)--(a4)--(b4)--(a6)--(n3)--(c4)--(b4)--(c3)--(b3)--(c2)--(b2);
            \draw[blue](a2)--(c2);
            \draw[violet](c2)--(d2);
            \draw[red](d2)--(e3)--(d3);
            \draw[blue](c3)--(a4);
            \draw[red](e3)--(d4);
            \draw[blue](c4)--(a6);
            \draw[red](d6)--(s3)--(d7);
            \draw(f2) .. controls (-2, -3.75) .. (e3);
            \draw[blue](b3)--(a3);
            \draw[blue](b4)--(a5);
            \draw[blue](c2)--(s4);
            \draw[blue](g1)--(s4)--(g2);

            \draw[blue](c3)--(s5);
            \draw[blue](g3)--(s5)--(g4);

            \draw[blue](c4)--(s6);
            \draw[blue](g5)--(s6)--(g6);
            \draw[red](b5)--(n2)--(n1)--(n3);
            \draw[red](s3)--(d6);
            \draw[red](w1)--(b5)--(w2);
            \draw[red](w3)--(b5)--(w4);
            \draw[red](w1)--(s1)--(w2)--(s2)--(w3)--(s1)--(w4)--(s2)--(w1);
            \draw[red](b5) .. controls (6.6,-1) ..(d7);
            \draw[red](b5) .. controls (6,-1) ..(d6);
            \draw[red](d7) .. controls (4.1,-4) ..(e3);
            \draw[red](d6) .. controls (3.5,-3.4) ..(e3);
            \draw[violet](d4) -- (c4);
            \draw[violet](d3) .. controls (1,0) ..(c3);
            
            \draw[red](n2)--(s8)--(s10);
            \draw[red](s8)--(s11);

            \draw[red](n2)--(s9)--(s12);
            \draw[red](s9)--(s13);
            \draw[red](n1)--(s14);
            
            \draw[red](s14)--(s15);
            \draw[red](s14)--(s16);
            \draw[red](s17)--(s18);
            \draw[red](s17)--(s19);
            \draw[red](s17)--(e3);
            \foreach\i in{a2,a4,a6,b2,c2,c3,c4,s4,s5,s6}{\filldraw(\i)[color=blue]circle(0.08);}
            \foreach\i in{w1,w2,w3,w4,e3, n1, s8, s9, s14}{\filldraw(\i)[color=red]circle(0.08);}

            \foreach\i in{a1,a3,a5,b1,b3,b4,c1,g1,g2,g3,g4,g5,g6}{\filldraw(\i)[color=blue,fill=white!90,thick]circle(0.08);}

            \foreach\i in{n2,n3,d2,d3,d4,d6,d7,d8,b5,s1,s2,s3,s7,s10, s11, s12, s13, s15, s16, s17, s18, s19,f2}{\filldraw(\i)[color=red,fill=white!90,thick]circle(0.08);}            

            \node at (-3.75,3){\textbf{\large F}};
            \node at (-3.75,1){\textbf{\large T}};
            \node at (-1,5){${\bm\beta_1}$};
            \node at (1,5){${\bm\beta_2}$};
            \node at (3,5){${\bm{\beta_3}}$};

            \node[blue] at (-3,3.25){${\bm d_1^1}$\;};
            \node[blue] at (-3,0.6){${\bm d_2^1}$\;\;};

            \node[red] at (-1.3,-2){${\bm \sigma^1_{1}}$};
            \node[red] at (1.25,-2){${\bm \sigma^1_{2}}$};
            \node[red] at (3.25,-2){${\bm \sigma^1_{3}}$};
            \node[red] at (-0.25,-0.75){${\bm d^7_1}$};

            \node[red] at (0,-2.65){${\bm \chi_1}$};

            \node[red] at (8,4){${\bm d^8_1}$};
            \node[red] at (8,2.8){${\bm d^8_2}$};
            \node[red] at (8,1.2){${\bm d^8_3}$};
            \node[red] at (8,0){${\bm d^8_4}$};
            \node[red] at (9.5,2.5){${\bm d^4_1}$};
            \node[red] at (9.5,1.5){${\bm d^4_2}$};

            \node[blue] at (-0.75,3.5){${\bm v_1^0}$};
            \node[blue] at (1.25,3.5){${\bm v_2^0}$};
            \node[blue] at (3.25,3.5){${\bm v_3^0}$};
            
            \node[blue] at (-1.75,3.25){${\bm d_1^5}$};            
            \node[blue] at (0.25,3.25){${\bm d_2^5}$};
            \node[blue] at (2.25,3.25){${\bm d_3^5}$};

            \node[blue] at (-0.75,1){${\bm v_1^1}$};
            \node[blue] at (1.25,1){${\bm v_2^1}$};
            \node[blue] at (3.25,1){${\bm v_3^1}$};

            \node[blue] at (-1.7,2){${\bm c_1^1}$};
            \node[blue] at (0.35,2){${\bm c_2^1}$};
            \node[blue] at (2.35,2){${\bm c_3^1}$};
            \node[red] at (7,2.25){${\bm c_1^2}$};

            \node[red] at (4,1.65){${\bm \lambda_1}$};
            \node[red] at (5.25,1.75){${\bm \lambda_2}$};
            \node[red] at (6,1.65){${\bm \lambda_3}$};

            \node[red] at (4.7, 1.6){${\bm d^{11}_1}$};
            \node[red] at (4.7, 0.9){${\bm d^{12}_1}$};
            \node[red] at (5.25, 0.9){${\bm d^{12}_2}$};

            \node[red] at (5.25, 2.5){${\bm d^9_1}$};
            \node[red] at (6.25, 2.5){${\bm d^9_2}$};
            
            \node[red] at (5.15, 3.25){${\bm d^{10}_1}$};
            \node[red] at (5.65, 3.25){${\bm d^{10}_2}$};

            \node[red] at (6.15, 3.25){${\bm d^{10}_3}$};
            \node[red] at (6.65, 3.25){${\bm d^{10}_4}$};

            \node[blue] at (-1.2,0.6){${\bm d_1^3}$};
            \node[blue] at (0.8,0.6){${\bm d_2^3}$};
            \node[blue] at (2.8,0.6){${\bm d_3^3}$};

            \node[red] at (7,-1.75){${\bm \sigma^2_3}$};            
            \node[red] at (6.25,-1.75){${\bm \sigma^2_2}$};
            \node[red] at (5,-1.75){${\bm \sigma^2_1}$};
            \node[red] at (5.5,-3.22){${\bm d^6_1}$};

            \node[blue] at (-1.75,0.75){${\bm d_1^2}$};
            \node[blue] at (0.25,0.75){${\bm d_3^2}$};
            \node[blue] at (2.25,0.75){${\bm d_5^2}$};
            \node[blue] at (-1.25,-0.15){${\bm d_2^2}$};
            \node[blue] at (0.75,-0.15){${\bm d_4^2}$};
            \node[blue] at (2.75,-0.15){${\bm d_6^2}$};
            
            \node[red] at (0.3,-3.5){${\bm d^{13}_1}$};
            \node[red] at (0.3,-4.25){${\bm d^{14}_1}$};
            \node[red] at (-0.3,-4.25){${\bm d^{14}_2}$};


        \end{tikzpicture}\vspace*{-2mm}
        \caption{An instance of Toggle that is logically equivalent to the $3$-$QBF$ game $(\beta_1 \vee \beta_2 \vee \beta_3)$. The solid vertices have weight $1$ and the empty vertices have weight $0$. (Colors are consistent with the notation of the edge sets $\textcolor{red}{R(G)},\textcolor{blue}{B(G)},\textcolor{violet}{P(G)}$.) }
        \label{fig:log-complete ref}
        \end{center}
    \end{figure}

\begin{figure}[H]
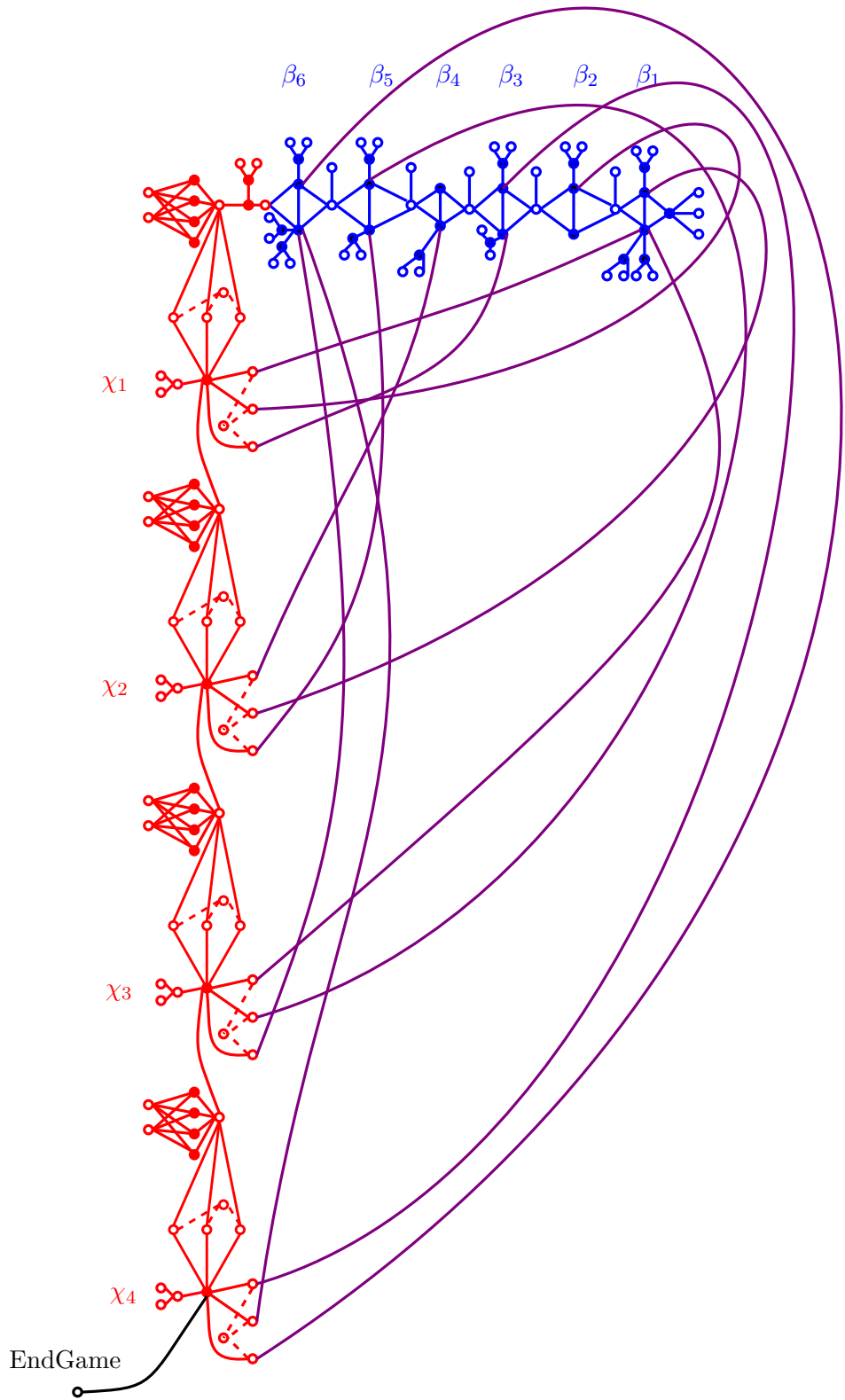

    \vspace*{-8cm}
    \begin{center}

    \caption{An instance of Toggle that is logically equivalent to the $3$-$QBF$ game \textbf{($\beta_1$ $\vee$ $\neg$$\beta_2$ $\vee$ $\beta_3$) $\wedge$ ($\neg$ $\beta_1$ $\vee$ $\beta_4$ $\vee$ $\beta_5$) $\wedge$ ($\beta_1$ $\vee$ $\neg$$\beta_5$ $\vee$ $\beta_6$) $\wedge$ ($\neg$$\beta_3$ $\vee$ $\beta_6$ $\vee$ $\neg$$\beta_6$)}. The solid vertices have weight $1$ and the empty vertices have weight $0$. (Colors are consistent with the notation of the edge sets $\textcolor{red}{R(G)},\textcolor{blue}{B(G)},\textcolor{violet}{P(G)}$. Note that the dashed lines are equivalent to solid lines and only differ for visual clarity.)}
    \label{fig:qbfTogExam}
    \end{center}
\end{figure}

\begin{figure}[H]
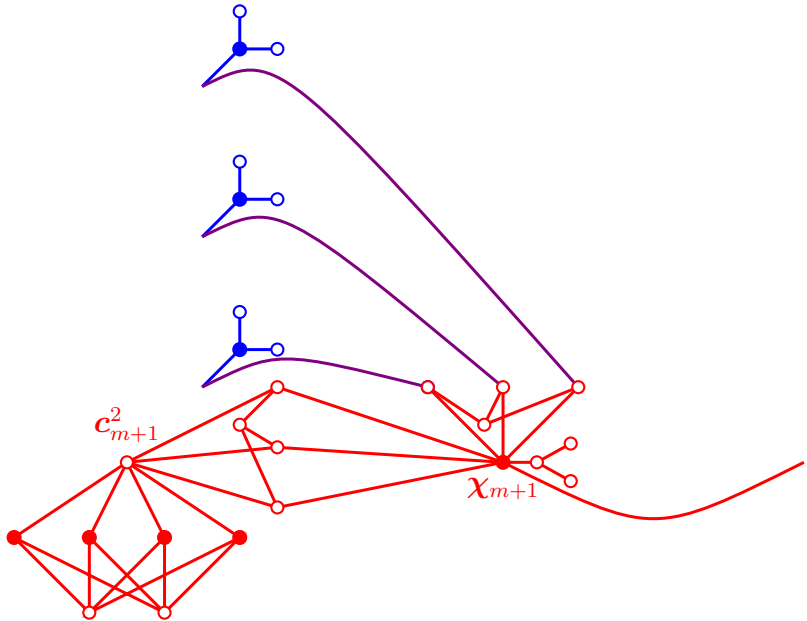

\centering
\begin{minipage}{.5\textwidth}
  \centering
  \subfigure{
\vspace*{2mm}
    \caption{Marginal Toggle graph component for each additional clause in the $CNF$ formula.}
    \label{fig:PSpaceMargCla}
    \end{center}
\end{figure}

\begin{remark}
     Theorem \ref{thm:Toggle_Pspace} was expected and alluded to by Even and Tarjan \cite{ET75} who posited that ``any game with a sufficiently rich structure" would (according to current theory) be PSPACE-complete. Theorem \ref{thm:Toggle_Pspace} simply supports the notion of the richness of Toggle and its ability to simulate other PSPACE-complete problems via logspace reductions.
\end{remark}



\newpage
\bibliographystyle{plain}
\bibliography{bibliography}

\section*{Statements and Declarations}
\begin{itemize}
    \item The authors declare the following funding source: NSF DMS-1852378 and NSF DMS-2150299.
    
    \item The authors have no relevant financial or non-financial interests to disclose.
    
    \item All authors contributed to the study conception and design. Material preparation, data collection and analysis were performed by all authors. All authors read and approved the final manuscript.

    \item The authors declare that they have no conflict of interest.

    \item Data sharing is not applicable to this article.
\end{itemize}

\end{document}